\newtheorem{theorem}{Theorem}[section]
\newtheorem{proposition}[theorem]{Proposition}
\newtheorem{lemma}[theorem]{Lemma}
\newtheorem{problem}[theorem]{Problem}
\theoremstyle{definition}
\newtheorem{definition}[theorem]{Definition}
\newtheorem{remark}[theorem]{Remark}
\newtheorem{example}[theorem]{Example}
\newtheorem*{acknowledgments}{Acknowledgments}
\newcommand{\R}{\mathbf{R}}
\DeclareMathOperator{\rad}{rad}
\DeclareMathOperator{\conv}{conv}
\DeclareMathOperator{\bd}{bd}
\newcommand{\bo}{o}
\begin{document}

\title[DC structure of the singular set of distance functions]{Delta-convex structure of the singular set of distance functions}
\author[T.~Miura]{Tatsuya Miura}
\address[T.~Miura]{Department of Mathematics, Tokyo Institute of Technology, Meguro, Tokyo 152-8511, Japan}
\email{miura@math.titech.ac.jp}
\author[M.~Tanaka]{Minoru Tanaka}
\address[M.~Tanaka]{School of Science, Department of Mathematics, Tokai University, Hiratsuka, Kanagawa 259-1292, Japan}
\email{tanaka@tokai-u.jp}
\keywords{Singular set, distance function, delta-convex submanifold, Lipschitz submanifold, Hamilton-Jacobi equation.}
\subjclass[2020]{49J52; 53C22, 53C60, 49L25, 35F21}

\date{\today}

\begin{abstract}
  For the distance function from any closed subset of any complete Finsler manifold, we prove that the singular set is equal to a countable union of delta-convex hypersurfaces up to an exceptional set of codimension two.
  In addition, in dimension two, the whole singular set is equal to a countable union of delta-convex Jordan arcs up to isolated points.
  These results are new even in the standard Euclidean space and shown to be optimal in view of regularity.
\end{abstract}

\maketitle

\setcounter{tocdepth}{1}
\tableofcontents

\section{Introduction}\label{section:1}

Let $m\geq2$ and $M^m$ be an $m$-dimensional connected complete smooth Finsler manifold throughout this paper, if not specified.
For a (nonempty) closed subset $N\subset M$ we consider the distance function $d_N:M\setminus N\to(0,\infty)$ from $N$:
\begin{equation}\label{eq:distance_def}
  d_N(p):=d(N,p)=\inf_{q\in N}d(q,p).
\end{equation}
We define the \emph{singular set} $\Sigma(N)$ by
\begin{equation}\label{eq:def_singular_set}
    \Sigma(N):=\Sigma(d_N),
\end{equation}
where for a general function $f:\Omega\subset M\to\R$ the singular set $\Sigma(f)$ is defined by
\begin{equation*}
    \Sigma(f):=\{p\in\Omega \mid \textrm{$f$ is not differentiable at $p$}\}.
\end{equation*}

The distance function emerges in a wide range of fields in geometry, analysis, and applied mathematics, not only as a fundamental tool but also a research object in itself.
For instance, Euclidean (resp.\ Finslerian) distance functions directly appear as natural weak solutions to the eikonal equation (resp.\ Hamilton--Jacobi equations).
In fact, even in Euclidean spaces, Finslerian distance functions directly appear as viscosity solutions $u:\overline{\Omega}\to\R$ to first-order Hamilton--Jacobi equations of the form
\begin{align*}
  \begin{cases}
    \begin{array}{rl}
      H(x,Du) = 1 & \quad \mathrm{in}\ \Omega\subset\mathbf{R}^m, \\
      u = 0 &  \quad \mathrm{on}\ \partial\Omega,
    \end{array}
  \end{cases}
\end{align*}
where $H\in C^\infty(\mathbf{R}^m\times\mathbf{R}^m)$ satisfies certain structural assumptions including the uniform convexity of $V_p:=\{H(p,\cdot)<1\}$;
see e.g.\ the classical book of P.-L.\ Lions \cite{Lions1982} and the celebrated study of Li--Nirenberg \cite{Li2005} for details.
A central problem in analysis of Hamilton--Jacobi equations is to understand the structure of the singular set of solutions.
From a geometrical point of view, the set $\Sigma(N)$ is equivalent to the \emph{ambiguous locus}, which is closely related to the cut locus.
Since the cut locus often appears as an important obstruction, its precise structure has been and is still investigated extensively.
In addition, the ambiguous locus is also called \emph{medial axis} \cite{Blum1967} (cf.\ \cite{Farin2002,Lieutier2004,Attali2009}) or \emph{skeleton} (cf.\ \cite{Duda1973}) in applied fields involving shape recognition, used as a fundamental tool as far back as the classical notion of Voronoi diagram or Dirichlet tessellation (where $N$ is discrete).
In fact, for a bounded open set $\Omega$ in a Riemannian manifold $M$ the set $\Sigma(M\setminus\Omega)$ has the same homotopy type as $\Omega$ \cite{Lieutier2004,Albano2013}.

The purpose of this paper is to investigate the general structure and optimal regularity of $\Sigma(N)$ under no more assumption on $N$.
The generality of $N$ would be important in view of the aforementioned wide applications, while it is known that the irregularity of $N$ may cause some pathology; in fact, concerning the closure of the singular set, $\overline{\Sigma(K)}=\R^m$  \cite{Zamfirescu1990} holds for most compact sets $K\subset\R^m$, and even $\overline{\Sigma(\partial C)}=C$ holds for most convex bodies $C$ with $C^1$-boundary $\partial C$ \cite{Santilli2021} (cf.\ \cite{Miura2016}).
However there is still a chance to obtain a finer structure theorem for $\Sigma(N)$ itself.

\subsection{Main results}

For a general closed subset $N\subset M$ it is well known that $\Sigma(N)$ is rectifiable, and in fact a stronger structural upper bound follows by Zaj\'{i}\v{c}ek's result below.
However its lower-bound counterpart is much more delicate, and our main contribution will be devoted to this point.

We shall first recall a remarkable characterization of the singular set of a general convex function in terms of delta-convex submanifolds, due to Zaj\'{i}\v{c}ek \cite{Zajicek1979} (necessity) and Pavlica \cite{Pavlica2004} (sufficiency).
A function is said to be \emph{delta-convex}, or \emph{DC} for short, if it is represented by a difference of two convex functions.
In particular, in view of regularity, DC is stronger than $C^{0,1}$ (Lipschitz) but weaker than $C^{1,1}$.
A submanifold is DC if it is locally represented by a DC graph.
See Section \ref{sec:StructureEuclidean} for details.
In terms of delta-convexity, they obtained the following

\begin{theorem}[Zaj\'{i}\v{c}ek \cite{Zajicek1979},  Pavlica \cite{Pavlica2004}]\label{thm:Zajicek-Pavlica}
  If $\Sigma(f)$ denotes the singular set of a convex function $f:\R^m\to\R$, then $\Sigma(f)$ is an $F_\sigma$-subset of a countable union of delta-convex hypersurfaces in $\R^m$.
  Conversely, if $A\subset\R^m$ is an $F_\sigma$-subset of a countable union of delta-convex hypersurfaces in $\R^m$, then there exists a convex function $f:\R^m\to\R$ such that $\Sigma(f)=A$.
\end{theorem}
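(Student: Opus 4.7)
The plan is to treat necessity and sufficiency separately.

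For the necessity direction I would first observe that $\Sigma(f)$ is automatically $F_\sigma$: since $\partial f$ is upper semicontinuous and $x \in \Sigma(f)$ iff $\mathrm{diam}\,\partial f(x) > 0$, the closed sets $\{x : \mathrm{diam}\,\partial f(x) \geq 1/n\}$ exhaust $\Sigma(f)$. It then remains to cover $\Sigma(f)$ by countably many DC hypersurfaces. Stratify $\Sigma(f) = \bigsqcup_{k=1}^{m} \Sigma_k(f)$ by $k = \dim \partial f(x)$. By the classical structure theory of singular sets of convex functions, the strata $\Sigma_k(f)$ for $k \geq 2$ lie in countably many $C^2$ submanifolds of dimension $m-k \leq m-2$, each locally extendable to a DC hypersurface. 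The delicate stratum is $\Sigma_1(f)$, on which $\partial f(x)$ is a nondegenerate segment. Covering $S^{m-1}$ by finitely many small caps around $e_1,\dots,e_L$, and further decomposing by the size of the one-sided derivative jump, the task reduces to showing that for each $i,n$ the set
\begin{equation*}
    \Sigma_{1,i,n}(f) := \{x : \partial_{e_i}^{+}f(x) - \partial_{e_i}^{-}f(x) \geq 1/n\}
\end{equation*}
lies inside a countable union of DC hypersurfaces transverse to $e_i$. Monotonicity of $t \mapsto \partial_{e_i}f(x+te_i)$ together with the local Lipschitz bound on $f$ forces only finitely many such jumps per line, so $\Sigma_{1,i,n}(f)$ is a finite union of graphs over the hyperplane $e_i^{\perp}$; the hard step is to prove each graphing function is not merely Lipschitz but DC.

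For the sufficiency direction, the local building block is the convex function
\begin{equation*}
  g(x',x_m) := \max\bigl\{x_m + \varphi_1(x'),\ \varphi_2(x')\bigr\},
\end{equation*}
defined when a DC hypersurface is locally written as $\{x_m = \varphi_2(x') - \varphi_1(x')\}$ with convex $\varphi_1,\varphi_2$. As a maximum of two convex functions $g$ is convex and has a genuine kink exactly along this hypersurface, while any extra singularities coming from $\Sigma(\varphi_1) \cup \Sigma(\varphi_2)$ belong (by the necessity direction just proved) to a countable DC family. Given an $F_\sigma$ set $A = \bigcup_n F_n$ with $F_n$ closed and contained in DC hypersurfaces $S_n$, I would build for each $n$ a convex function $f_n$ with $\Sigma(f_n) = F_n$ by taking a suitable convex envelope of the local pyramid $g_n$ that preserves the kink exactly on $F_n$ and smooths it out on $S_n \setminus F_n$. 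Summing $f := \sum_n \varepsilon_n f_n$ with weights so small that the series and its subdifferentials converge locally uniformly, and invoking the fact that the singular set of a convergent convex sum equals the union of singular sets of the summands, yields $\Sigma(f) = A$.

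The main obstacle is the DC regularity of the graphing functions in the $\Sigma_1$ stratum: monotonicity of a single directional derivative only produces a Lipschitz graph, and upgrading Lipschitz to DC requires exploiting convexity of $f$ in all directions simultaneously. Zaj\'{i}\v{c}ek's argument realizes the graphing function as a difference of two convex selections from the supporting planes of $\partial f$ and controls the delta-convex modulus via the monotone-operator structure of $\partial f$; this quantitative step is the technical heart of the theorem. The sufficiency direction is cleaner in principle but demands careful bookkeeping to ensure the convex envelopes smooth out exactly the desired portion of the kink without creating new singularities or destroying the prescribed ones, and that the countable series preserves this structure globally.
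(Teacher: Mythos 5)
This statement is cited in the paper as a background result of Zaj\'{i}\v{c}ek and Pavlica; the paper gives no proof of its own, so there is no internal argument to compare against. Evaluating your sketch on its own terms: the necessity outline is in the right spirit, but it has two issues. First, the stratification by $k=\dim\partial f(x)$ and the appeal to ``classical structure theory'' for the strata with $k\geq 2$ is both unnecessary and circular: the claim that those strata lie in countably many DC (or even $C^2$) submanifolds of codimension $k$ \emph{is} Zaj\'{i}\v{c}ek's theorem, not a black box you can invoke to prove it. In fact the stratification is superfluous---if $\dim\partial f(x)\geq 1$ then $\partial f(x)$ contains a nondegenerate segment, so every singular point already falls into one of the sets $\Sigma_{1,i,n}(f)$ that you introduce, and the hypersurface covering coming from that family covers all of $\Sigma(f)$ at once. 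Second, and more importantly, the step you flag as ``the technical heart''---upgrading the Lipschitz graph to a DC graph via the monotone structure of $\partial f$---is precisely where Zaj\'{i}\v{c}ek's actual proof lives, and you do not give an argument for it. You also should be slightly more careful in asserting that $\Sigma_{1,i,n}(f)$ is a finite union of graphs over $e_i^\perp$: monotonicity of $t\mapsto \partial^+_{e_i}f(x'+te_i)$ gives at most $O(n)$ jump times on a bounded segment, but producing globally defined graphing functions requires an extra selection/extension argument.

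The sufficiency sketch has larger gaps. Taking a maximum $g(x',x_m)=\max\{x_m+\varphi_1(x'),\,\varphi_2(x')\}$ does produce a kink along the whole DC hypersurface, but the hard part of Pavlica's theorem is exactly to arrange that the kink is preserved on the prescribed $F_\sigma$-set $A$ and \emph{removed} on the complement inside the hypersurface, without introducing new nondifferentiability points on or off the hypersurface; saying ``take a suitable convex envelope\ldots that preserves the kink exactly on $F_n$'' names the goal rather than a construction. Moreover, the assertion that the singular set of a convergent sum $\sum_n\varepsilon_n f_n$ of convex functions equals $\bigcup_n\Sigma(f_n)$ is not automatic: the inclusion $\bigcup_n\Sigma(f_n)\subset\Sigma(f)$ is easy (the subdifferential of the sum contains translates of each $\partial f_n(x)$), but the reverse inclusion---that no new singularities appear in the limit---requires control on how $\sum_n\varepsilon_n\partial f_n$ converges, and is part of what Pavlica must verify. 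As written, your proposal is a correct high-level roadmap of the two classical arguments with their difficulties honestly labeled, but it does not constitute a proof.
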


The distance function $d_N$ is locally semi-concave (cf.\ \cite{Cannarsa2004}), i.e., locally concave up to an addition of a parabola, and hence the local structural properties of the singular set $\Sigma(N)$ directly inherit from those of convex functions.
In particular, extending the ambient space from $\R^m$ to $M^m$ (cf.\ Section \ref{sec:StructureFinslerian}), we deduce from Theorem \ref{thm:Zajicek-Pavlica} the structural upper bound that \emph{the singular set $\Sigma(N)$ is covered by a countable union of DC hypersurfaces $S_j\subset M^m$, that is, $\Sigma(N)\subset\bigcup_{j=1}^\infty S_j$.}
This assertion is clearly stronger than the standard countable $(m-1)$-rectifiability, or even than the rectifiability of class $C^2$ thanks to a covering argument (cf.\ \cite{Hajlasz2022} and references therein).

A natural problem is then to explore the structure of the singular set itself, by obtaining suitable structural lower bounds.
In view of Theorem \ref{thm:Zajicek-Pavlica}, the set $\Sigma(N)=\Sigma(d_N)$ is at least an $F_\sigma$-subset, but this topological condition is not strong enough for ruling out many pathological examples such as Cantor sets.

Our main result ensures that the singular set is in fact \emph{equal to} a union of DC hypersurfaces up to an exceptional set of codimension two.

\begin{theorem}[Generic structure in any dimension]\label{thm:main_singular_set}
  Let $m\geq2$ and $N\subset M^m$ be a closed subset of an $m$-dimensional connected complete Finsler manifold.
  Then
  $$\Sigma(N)=\bigcup_{j=1}^\infty S_j\cup R,$$
  where $\{S_j\}_{j=1}^\infty$ is a countable family of delta-convex hypersurfaces $S_j\subset M$, and $R\subset M$ is a subset contained in a countable union of $(m-2)$-dimensional delta-convex submanifolds of $M$.
\end{theorem}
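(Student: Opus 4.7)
The plan is to work locally in Euclidean charts. Since $d_N$ is locally semi-concave on $M\setminus N$ (cf.\ \cite{Cannarsa2004}), in each chart the function $g=-d_N+C|x|^2$ is convex for $C$ large enough, and $\Sigma(g)=\Sigma(d_N)$ locally. Theorem~\ref{thm:Zajicek-Pavlica} then furnishes a countable family of DC hypersurfaces $\{\tilde S_j\}$ covering $\Sigma(N)$. The task is to upgrade this inclusion to an equality modulo a codimension-two exceptional set.

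Next, I stratify $\Sigma(N)$ by the dimension of the superdifferential $D^+d_N(p)$: set $\Sigma^k:=\{p\in\Sigma(N):\dim D^+d_N(p)=k\}$ and $\Sigma^{\geq 2}:=\bigcup_{k\geq 2}\Sigma^k$. For the higher strata, an iterated application of Theorem~\ref{thm:Zajicek-Pavlica} within each $\tilde S_j$ (using that the transverse component of the subdifferential of $g$ restricted to $\tilde S_j$ is itself a DC monotone object) covers $\Sigma^{\geq 2}\cap\tilde S_j$ by a countable family of DC submanifolds of $M$ of dimension $m-2$; these are absorbed into the exceptional set $R$.

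The core step is to exhibit $\Sigma^1$ as a countable union of DC hypersurfaces contained in $\Sigma(N)$. For $p\in\Sigma^1$, the segment $D^+d_N(p)$ has two extreme covectors $\xi^\pm$, realized by two minimizing geodesics from $p$ to distinct foot points $q_1,q_2\in N$. Upper semi-continuity of the foot-point map yields disjoint neighborhoods $V_1,V_2\subset N$ of $q_1,q_2$ and a neighborhood $U$ of $p$ on which $d_N=\min(d_{V_1},d_{V_2})$. The function $d_{V_1}-d_{V_2}$ is then DC on $U$ with non-vanishing gradient $\xi^+-\xi^-$ at $p$, so a DC implicit function theorem shows that the bisector $\{d_{V_1}=d_{V_2}\}\cap U$ is a DC hypersurface through $p$, entirely contained in $\Sigma(N)$. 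A Lindel\"of extraction from such local bisector patches yields $\{S_j\}$.

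The main obstacle is implementing this ``local bisector'' picture rigorously in the Finsler setting. One must exploit the uniform convexity of the dual Finsler indicatrix to secure transversality of the two characteristic fields at every $\Sigma^1$ point, control the selection of foot-point packets in a DC-compatible way, and invoke a DC implicit function theorem for level sets of DC functions with non-vanishing DC gradient. Edge cases where $\Sigma^1\cap U$ strictly contains the bisector (due to singularities of $d_{V_i}$ themselves or additional foot points aligned along $\xi^\pm$) must be further stratified into codimension-two pieces absorbed into $R$. Combining the local structure across a countable atlas with the $\Sigma^{\geq 2}$ analysis then completes the proof of Theorem~\ref{thm:main_singular_set}.
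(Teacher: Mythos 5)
Your strategy for the codimension-one part is essentially the paper's: pass to points with exactly two distinct minimizing geodesics, carve out disjoint neighborhoods $V_1,V_2$ of the two foot points, and apply a DC implicit function theorem to the bisector $f=d_{V_1}-d_{V_2}$ to produce a DC hypersurface patch through each such point contained in $\Sigma(N)$. (Your stratum $\Sigma^1$, where the superdifferential is a nontrivial segment, coincides with the set $\Sigma_2(N)$ of two-foot-point singularities because strict convexity of the dual indicatrix forbids three colinear reachable differentials.) The residual stratum with superdifferential of dimension $\geq 2$ is handled the same way, by Zaj\'{i}\v{c}ek's upper bound, though you should simply cite his result on $\Sigma^k$ directly rather than gesture at an ``iterated application'' within the covering hypersurfaces, which is not the same thing and would need justification.

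The genuine gap is in the countability step, and it is exactly the point the paper flags as delicate. Your Lindel\"of extraction does not work: although the balls $B_{\delta(p)}(p)$, $p\in\Sigma_2(N)$, can be refined to a countable subcover of $\Sigma_2(N)$, the corresponding bisector patches $S_p$ need \emph{not} cover $\Sigma(N)\cap B_{\delta(p)}(p)$, nor even $\Sigma_2(N)\cap B_{\delta(p)}(p)$. The paper's Example (the ``branched singular set'' $N=\{(\pm1,1/k)\}\cup\{(\pm1,0)\}$) shows that near a point of $\Sigma_2(N)$ there can remain infinitely many one-dimensional branches of $\Sigma_2(N)$ that are disjoint from the bisector patch $S_p$. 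Those residual pieces are codimension-one, not codimension-two, so your proposal to ``stratify the edge cases into codimension-two pieces absorbed into $R$'' fails. What is actually needed is a \emph{quantitative cleaving} estimate: Lemma~\ref{lem:biLipschitzpropagate} proves that $\rad_N(y)\leq\tfrac12\rad_N(p)$ for $y\in(\Sigma(N)\cap B_{\delta(p)}(p))\setminus S_p$, where $\rad_N$ is the spread of the foot-point set. One then partitions $\Sigma_2(N)\cap K$ into dyadic radius levels $A_i$ and $\delta$-levels $A'_{ij}$; within each $A'_{ij}$, removing one patch $S_{p_1}$ removes the whole ball $B_{\delta(p_1)}(p_1)\cap A'_{ij}$ (because anything left in the ball has strictly smaller radius and hence lives in a different $A_i$), and compactness of $K$ together with the uniform lower bound $\delta(p)\geq 1/j$ forces the greedy selection to terminate in finitely many steps. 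Without this hierarchy, no covering by the $S_p$ alone is countable, and the argument does not close.
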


\begin{remark}[Optimality]
    It turns out that the DC regularity is optimal.
    Indeed, in Proposition \ref{prop:optimality} we will construct an example which shows that the DC regularity cannot be replaced with a slightly better one such as $C^1$ or semi-concavity.
\end{remark}

Our next theorem gives a finer understanding of the whole structure in dimension two.
If $m=2$ in Theorem \ref{thm:main_singular_set}, then the set $R$ consists of at most countably many points.
However, this observation does not tell us anything about the relation between zero- and one-dimensional parts.
Here we show that those parts are completely ``splitted'' in the sense that the whole singular set is equal to a union of DC Jordan arcs (corresponding to one-dimensional DC submanifolds ``with boundary'', cf.\ Definition \ref{def:DC_Jordan_arc}) and isolated points.

\begin{theorem}[General structure in dimension two]\label{thm:2D_singular_set}
  Let $N\subset M^2$ be a closed subset of a two-dimensional connected complete Finsler manifold.
  Then
  $$\Sigma(N)=\bigcup_{j=1}^\infty S_j\cup R,$$
  where $\{S_j\}_{j=1}^\infty$ is a countable family of delta-convex Jordan arcs $S_j\subset M$, and $R\subset M$ consists of all isolated points of $\Sigma(N)$.
  Clearly, $R\cap S_j=\emptyset$ for all $j$.
\end{theorem}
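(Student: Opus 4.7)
The strategy is to upgrade the decomposition $\Sigma(N)=\bigcup_{j}\tilde S_{j}\cup R_{0}$ furnished by Theorem \ref{thm:main_singular_set} in the case $m=2$, where each $\tilde S_{j}$ is a one-dimensional DC submanifold of $M^{2}$ and $R_{0}$ is contained in a countable union of zero-dimensional DC submanifolds. Since a zero-dimensional DC submanifold is a discrete set of points, $R_{0}$ is at most countable. Two refinements remain: (a) replace each DC hypersurface $\tilde S_{j}$ by countably many DC Jordan arcs, and (b) show that the leftover exceptional set can be shrunk to exactly the isolated points of $\Sigma(N)$.

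Step (a) is a topological reshuffling. Each $\tilde S_{j}$ is a $1$-manifold without boundary embedded in $M^{2}$, so its connected components are homeomorphic to $\R$ or to $S^{1}$, and each may be covered by countably many closed Jordan subarcs. The local DC-graph representation of $\tilde S_{j}$ restricts to each such subarc, producing a countable family $\{S_{j}\}$ of DC Jordan arcs satisfying $\bigcup_{j} S_{j} = \bigcup_{j}\tilde S_{j} \subset \Sigma(N)$.

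Step (b) is the heart of the argument. Split $R_{0} = R \cup R^{\mathrm{acc}}$, where $R$ consists of the isolated points of $\Sigma(N)$ lying in $R_{0}$ (the new exceptional set) and $R^{\mathrm{acc}}$ consists of those $p \in R_{0}$ that are accumulation points of $\Sigma(N)$. The task is to absorb each $p \in R^{\mathrm{acc}}$ into some DC Jordan arc. The argument proceeds locally, using the semi-concavity of $d_{N}$: the local structure of $\Sigma(N)$ near $p$ is controlled by the superdifferential $D^{+}d_{N}(p)$, and since $p$ is non-isolated, a dimension-$2$ analysis of the singular set of a semi-concave function yields a DC branch of $\Sigma(N)$ terminating at $p$. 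Adjoining $p$ as a DC endpoint to the corresponding Jordan arc from step (a) yields the desired absorption. Finally, the disjointness $R \cap S_{j} = \emptyset$ is automatic: every point of a DC Jordan arc is accumulated by other points of the same arc and is therefore non-isolated in $\Sigma(N)$, whereas the new $R$ consists only of isolated points.

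The principal obstacle lies in step (b): one must show that near a non-isolated exceptional point $p$ the singular set of the semi-concave function $d_{N}$ admits a branch carrying \emph{DC} (rather than merely Lipschitz or continuous) regularity up to $p$. This calls for revisiting the DC structural analysis behind Theorem \ref{thm:main_singular_set} from the endpoint side of the branches rather than from their interior side, so that the representing DC functions extend continuously, together with a bounded convex-minus-convex decomposition, all the way to the endpoint.
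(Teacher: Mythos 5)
Your outline correctly identifies the two refinements needed over Theorem~\ref{thm:main_singular_set} in dimension two, and step~(a) (covering each $1$-dimensional DC submanifold by countably many DC Jordan arcs) is routine and matches the paper. The disjointness observation $R\cap S_j=\emptyset$ is also correct. However, step~(b) contains a genuine gap, and the route you propose for it cannot succeed.

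You claim that, at a non-isolated exceptional point $p$, ``a dimension-$2$ analysis of the singular set of a semi-concave function yields a DC branch of $\Sigma(N)$ terminating at $p$,'' controlled by the superdifferential of $d_N$ at $p$. This is false for general semi-concave functions: Proposition~\ref{prop:counterexample} and Example~\ref{ex:cantor} in the appendix construct a \emph{convex} function on $\R^2$ whose singular set is a Cantor set, so every singular point is non-isolated yet no arc whatsoever---let alone a DC one---emanates from any of them. Hence no amount of ``revisiting the DC structural analysis behind Theorem~\ref{thm:main_singular_set} from the endpoint side'' can close the gap if it only uses semi-concavity and the superdifferential. The paper's proof instead exploits structure specific to distance functions: every non-isolated $p\in\Sigma(N)$ admits a \emph{sector} bounded by two $N$-segments $\gamma_1,\gamma_2$. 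Proposition~\ref{prop:DCcurve_singular_admit_sector} then fabricates two disjoint compact subsets $N_1,N_2\subset N$ (pullbacks under $\pi_N$ of short subarcs of a convex circle near the two boundary geodesics), uses Lemma~\ref{lem:unique_N_i-segment_endpoints} to get that each $\gamma_i$ is the unique $N_i$-segment to $p$, and applies the DC implicit function theorem to $f=d_{N_1}-d_{N_2}$ (which has singleton generalized differential $\{\omega_1-\omega_2\}\neq\{0\}$ at $p$) to produce a DC Jordan arc through $p$; a bisecting argument on $df_p$ shows the arc enters the sector, hence lies in $\Sigma(N)$. This sector construction and the choice of the auxiliary sets $N_1,N_2$ are the essential new ingredients missing from your proposal.
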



The significance of Theorem \ref{thm:main_singular_set} or Theorem \ref{thm:2D_singular_set}, compared to the known structural upper bound from Theorem \ref{thm:Zajicek-Pavlica}, is highlighted by Cantor-like sets for example.
In fact, a concave function can have the singular set given by a Cantor-like set $C\subset\R\times\{0\}\subset\R^2$ of any Hausdorff dimension between $0$ and $1$.
The set $C$ is $1$-rectifiable as it can be covered by a segment.
However $C$ cannot be attained by any singular set $\Sigma(N)\subset\R^2$ since Theorem \ref{thm:2D_singular_set} with $M=\R^2$ implies that the set $\Sigma(N)$ must have Hausdorff dimension $0$ or $1$.
(See Section \ref{subsec:counterexample} for more details.)

Our argument for Theorem \ref{thm:main_singular_set} also extends to a proof of higher-codimensional singularity propagations (Theorem \ref{thm:high_biLipschitz}), strengthening the assertion of \cite[Corollary 6.4]{Albano1999}, but only works for non-generic points (cf.\ Problem \ref{prob:highercodimension}).
In particular the following problem would be open, even in the Euclidean case $M^m=\R^m$ and even if DC regularity is weakened to Lipschitz regularity:

\begin{problem}\label{prob:DCcovering}
  For $m\geq3$ and a closed subset $N\subset M^m$, is the singular set $\Sigma(N)$ equal to a countable union of DC submanifolds?
\end{problem}

\subsection{Literature}

Now we compare our study with the previous literature.
In particular, Theorem \ref{thm:main_singular_set} is new even for the standard Euclidean space $M=\mathbf{R}^m$.
Roughly speaking, previously known are a ``Lipschitz propagation'' along the singular set of a semi-concave function on $\R^m$ \cite{Albano1999}, and a ``bi-Lipschitz propagation'' for the distance function on $M^2$ \cite{Sabau2016}.
Compared to those results, our finer assertion may be called ``DC graphical propagation''.

The analytical study of the singular set of distance functions has a long history, a part of which we now briefly review focusing on the standard Euclidean case $M=\R^m$.

The primitive upper bound that $\Sigma(N)\subset\mathbf{R}^m$ has zero Lebesgue measure follows by Rademacher's theorem, but it was also geometrically shown by Erd\H{o}s in 1945 \cite{Erdoes1945}.
In 1946 \cite{Erdoes1946}, Erd\H{o}s also proved that $\Sigma(N)$ is covered by a countable union of sets of finite $(m-1)$-dimensional Hausdorff measure, and more on higher-codimensional structures.
Finer structural upper bounds are by now well developed for general concave functions, and thus for semi-concave functions including distance functions.
In fact, Zaj\'{i}\v{c}ek \cite{Zajicek1979} studied the $k$-th singular set $\Sigma^k(u):=\{x\in\mathbf{R}^m\mid \dim(\partial u(x))\geq k \}$ for $k\geq1$ of a general concave function $u:\mathbf{R}^m\to\mathbf{R}$, where $\partial u$ denotes the supergradient (or the generalized gradient), and proved (not only Theorem \ref{thm:Zajicek-Pavlica} but also) that $\Sigma^k(u)$ is covered by a countable union of $(m-k)$-dimensional DC submanifolds.
In particular, $\dim_{\mathcal{H}}\Sigma^k(u)\leq m-k$, where $\dim_\mathcal{H}$ denotes the Hausdorff dimension.
Zaj\'{i}\v{c}ek's DC result also implies that $\Sigma^k(u)$ is $C^2$-rectifiable; this fact is later independently found by Alberti \cite{Alberti1994} and extended by Menne--Santilli \cite{Menne2019}.
Alberti--Ambrosio--Cannarsa \cite{Alberti1992} obtained a quantitative upper bound for $\Sigma^k(u)$.

On the other hand, the structural lower bound is more delicate.
This problem is often called \emph{propagation of singularities} in the context of semi-concave functions or solutions to Hamilton--Jacobi equations (see e.g.\ the surveys \cite{Cannarsa2020,Cannarsa2021} and references therein).
Apart from many important contributions to 1D dynamical propagations (as in \cite{Cannarsa2020,Cannarsa2021}), a few results are available for higher-dimensional propagations.
Ambrosio--Cannarsa--Soner \cite{Ambrosio1993} addressed this issue for general semi-concave functions and gave conditions for singularities to have lower bounds of the Hausdorff dimension.
Albano--Cannarsa proceeded in this direction and obtained a remarkable ``Lipschitz propagation'' result \cite[Theorem 5.2]{Albano1999}: Under a geometrical condition on the generalized gradient, cf.\ \eqref{eq:AlbanoCannarsa}, singularity propagates along the image of a nontrivial Lipschitz map; see also \cite[Theorem 4.2]{Cannarsa2009} for more on how it propagates.
In particular, by applying the results in \cite{Alberti1992,Albano1999} to the distance function $d_N$, one can assert that such a Lipschitz propagation occurs at a generic singular point (cf.\ Section \ref{subsec:semiconcave}).
This result already implies that either $\dim_\mathcal{H}\Sigma(N)=m-1$ or $\dim_\mathcal{H}\Sigma(N)\leq m-2$ holds, already ruling out the above Cantor-like examples.
Compared to Albano--Cannarsa's result, Theorem \ref{thm:main_singular_set} asserts finer properties; the propagation is graphical and has optimal DC regularity, and the covering is countable.
Theorem \ref{thm:2D_singular_set} improves Bartke--Berens' result
\cite{Bartke1986} to optimal DC regularity (see also \cite{Frerking1989,Vesely1992,Albano1999} for related arc-type propagation results).
We also emphasize that all the aforementioned previous results are obtained in Euclidean space but our results are valid in a general Finsler manifold.

We mention that the understanding of delta-convexity in terms of integrability is very recently gained by Lions--Seeger--Souganidis and Tao \cite[Appendix B]{LionsIUMJ}.
In general we have $W^{2,\infty}_\mathrm{loc}(\R^d)\subset DC_\mathrm{loc}(\R^d)\subset W^{1,\infty}_\mathrm{loc}(\R^d)$ as mentioned.
If $d=1$, we have $u\in DC_\mathrm{loc}(\R)$ if and only if $u'\in BV_\mathrm{loc}(\R)$.
On the other hand, if $d\geq2$, Tao's counterexample shows that $W^{2,p}_\mathrm{loc}(\R^d)\setminus DC_\mathrm{loc}(\R^d)\neq\emptyset$ for all $p\in[1,\infty)$.

We also discuss some geometric aspects based on the cut locus.
The cut locus from $N$ is the set of all points where a geodesic from a closed set $N$ loses its minimality.
In classical settings, e.g.\ for smooth submanifolds $N$, it is well known that the cut locus is decomposed into two parts; the set called ambiguous locus, which consists of points admitting multiple minimal geodesics to $N$, and the remaining set of points admitting unique minimal geodesics (which are necessarily focal points).
The ambiguous locus is known to be dense in the cut locus and in fact characterized by the singular set $\Sigma(N)$, cf.\ \cite{Sabau2016}.

The cut locus was first introduced by Poincar\'{e} \cite{Poincare1905} in 1905 in the case of a singleton $N=\{p\}$ in a certain two-dimensional Riemannian manifold $M$, where it is shown that the cut locus is a union of arcs with finitely many ``endpoints''.
Since then similar or more precise structural results have been established by many authors for the cut locus from a point in various two-dimensional ambient spaces, see e.g.\ \cite{Myers1935,Myers1936,Whitehead1935,Hartman1964,Hebda1994,Itoh1996}.
The case of a general closed set $N\subset M^m$ is also well understood for dimension $m=2$, namely for Alexandrov surfaces \cite{Shiohama1996} or Finsler surfaces \cite{Sabau2016} (see also \cite{Tanaka2020} dealing with a wider class of functions).
In particular, in \cite{Sabau2016}, a very similar result to Theorem \ref{thm:main_singular_set} is obtained in dimension two; namely, a generic part of the singular set is equal to a countable family of rectifiable Jordan arcs.
Theorem \ref{thm:main_singular_set} extends this result to a general dimension by a different method, and also Theorem \ref{thm:2D_singular_set} gives a stronger (likely optimal) assertion in dimension two.

Concerning higher-dimensional ambient spaces $M$, one of the most relevant results would be Hebda's statement \cite[Proposition 1.1]{Hebda1987}, which is strongly based on Ozols' results on hypersurface properties \cite[Propositions 2.3, 2.4]{Ozols1974}: Let $N=\{p\}$ in a complete Riemannian manifold $M^m$.
Let $C_2(N)$ be the set of all cut points that are nonconjugate and admit exactly two minimal segment from $N=\{p\}$ (called \emph{normal cut points} by Hartman \cite{Hartman1964}, and also \emph{cleave points} by Hebda \cite{Hebda1987}).
Then $C_2(N)$ is relatively open in the cut locus and consists of smooth hypersurfaces, and also the remaining set has zero $\mathcal{H}^{m-1}$-measure.
In this regard, our Theorem \ref{thm:main_singular_set} extends those results with respect to the generality of $N$ (as well as $M$).
The fact that $N$ is not a singleton yields many substantial difficulties.
For example, due to the possible irregularity of $N$ one has no canonical notion of (non)conjugacy of cut points.
Here we simply do not deal with conjugacy but consider the set $\Sigma_2(N)$ of all points admitting two minimal geodesics from $N$.
Then the set $\Sigma_2(N)$ may not be relatively open in general, which is a new technically delicate point.

For the proof of Theorem \ref{thm:main_singular_set} we develop a proper extension of an implicit function method classically used in differential geometry, e.g.\ in \cite{Hartman1964,Ozols1974,Hebda1987,Ardoy}.
More precisely we focus on the generic subset $\Sigma_2(N)\subset\Sigma(N)$ and prove that a DC graphical propagation occurs from any point of $\Sigma_2(N)$ by applying an implicit function theorem of Zaj\'{i}\v{c}ek.
The countability of the covering is an additional delicate point because the $\mathcal{H}^{m-1}$-measure of $\Sigma(N)$ may not be locally finite, and also because the propagation from a point in $\Sigma_2(N)$ may not cleanly ``cleave'' the singular set, i.e., even after propagation there may remain non-negligible residual points in a neighborhood (cf.\ an example in \cite{Mantegazza2003}, or our Example \ref{ex:branch}).
To overcome this issue we introduce a new quantitative level of hierarchy and prove a quantitative ``cleaving'' property.
The complement $\Sigma(N)\setminus\Sigma_2(N)$ is estimated by Zaj\'{i}\v{c}ek's upper bound.
For the proof of Theorem \ref{thm:2D_singular_set} we develop an additional sector-decomposition argument inspired by \cite{Sabau2016}.

Finally we mention that the closure of $\Sigma(N)$, an analytical counterpart of the cut locus, is also called \emph{ridge} and importantly appears in various contexts; e.g., elastic-plastic torsion problems \cite{Ting1966,Caffarelli1979}, granular matter theory \cite{Crasta2015}, infinity-Laplacian eigenvalue problem \cite{Crasta2019}.
General theory is developed \cite{Mantegazza2003,Li2005,Crasta2007,Miura2016,Santilli2021,Bialozyt} and it is revealed that the wildness of the ridge is quite sensitively affected by the irregularity of $N$.
Other types of singular sets and their relations are also under investigation by many authors, see e.g.\ \cite{Fremlin1997,Bishop2008,Crasta2016,Safdari2019,Kolasinski}.

\subsection{Organization}

This paper is organized as follows:
To clarify the main ideas for the generic structure theorem, in Section \ref{sec:StructureEuclidean}, we first prove Theorem \ref{thm:main_singular_set} in the Euclidean case $M=\mathbf{R}^m$ (Theorem \ref{thm:Euclidean_singular_set}) and discuss more, including optimality of DC regularity (Proposition \ref{prop:optimality}) and comparison with semi-concave function theory.
We then turn to the Finsler case.
In Section \ref{sec:PreliminariesFinsler} we recall some basic notions and prepare basic definitions in terms of Finsler geometry, and then in Section \ref{sec:StructureFinslerian} we complete the proof of Theorem \ref{thm:main_singular_set} through a more precise statement (Theorem \ref{thm:Finslerian_singular_set}).
Finally we prove Theorem \ref{thm:2D_singular_set} in Section \ref{sec:Structure2D}.

\begin{acknowledgments}
  The authors would like to thank Luis Guijarro and Ulrich Menne for their helpful comments.
  The first author is in part supported by JSPS KAKENHI Grant Numbers 18H03670, 20K14341, and 21H00990, and by Grant for Basic Science Research Projects from The Sumitomo Foundation.
\end{acknowledgments}

\section{Generic DC structure: Euclidean case}\label{sec:StructureEuclidean}

In this section we prove Theorem \ref{thm:main_singular_set} in the Euclidean case $M=\mathbf{R}^m$ (with the standard metric) for clarifying the essential techniques, and also for convenience of some of the readers unfamiliar with non-Euclidean arguments.

\subsection{Lipschitz, delta-convex, semi-concave, and distance functions}

In this preliminary section we first clarify the relationship between some important classes of functions.
Let $\Omega\subset\R^m$ be an open set and $u:\Omega\to\R$ be a continuous function.
\begin{itemize}
    \item We say that $u$ is \emph{locally Lipschitz} if for any open set $\Omega'$ compactly embedded in $\Omega$ there is $C>0$ such that $|u(x)-u(y)|\leq C|x-y|$ holds for any $x,y\in\Omega'$.
    \item We say that $u$ is \emph{locally delta-convex}, or \emph{locally DC} for short, if for any open convex set $\Omega'$ compactly embedded in $\Omega$ there are two convex functions $g_1,g_2:\Omega'\to\R$ such that $u=g_1-g_2$ holds on $\Omega'$.
    \item We say that $u$ is \emph{locally semi-concave} if for any open convex set $\Omega'$ compactly embedded in $\Omega$ there is $C>0$ such that one of the following equivalent conditions hold (see Cannarsa--Sinestrari \cite{Cannarsa2004} for details):
    \begin{itemize}
      \item[(1)] $\lambda u(x_1)+(1-\lambda)u(x_0) - u(\lambda x_1+(1-\lambda)x_0) \leq C\lambda(1-\lambda)|x_1-x_0|^2$ for any $\lambda\in[0,1]$ and any $x_0,x_1\in \Omega'$.
      \item[(2)] $u(x)-C|x|^2$ is a concave function on $\Omega'$.
      \item[(3)] $D^2u\leq 2CI$ on $\Omega'$ in the distributional sense, where $I$ denotes the $m\times m$ identity matrix.
    \end{itemize}
\end{itemize}
We also say that a map $u:\Omega\to\R^k$ is locally Lipschitz (resp.\ DC, semi-concave) map if so is every component of $u=(u_1,\dots,u_k)$.

Any locally semi-concave function is locally delta-convex by property (2).
Also any locally delta-convex function is locally Lipschitz since so is any convex function.
In addition, it is well known \cite{Cannarsa2004} that
\begin{itemize}
    \item For any closed subset $N\subset\R^m$ the distance function $d_N:\R^m\setminus N \to \R$ is locally semi-concave (and hence DC as well as Lipschitz).
\end{itemize}
We will use these notions and facts throughout this paper.
\emph{We will often drop the term ``locally''} because our concern will be completely local.

For later use we collect fundamental properties of DC functions, which can be found e.g.\ in Hartman \cite{Hartman1959}, Vesel\'{y}--Zaj\'{i}\v{c}ek \cite{Vesely1989}, Lions--Seeger--Souganidis \cite{LionsIUMJ}.

\begin{lemma}\label{lem:DC_fundamental_properties}
Let $U$ and $V$ denote open subsets of $\R^m$ and $\R^k$ respectively.
\begin{itemize}
    \item[(i)] If two locally DC functions $H$ and $J$  are defined on the set $U,$ then $H+J$ and $H-J$ are also locally DC.
    \item[(ii)] If maps $F :U\to \R^k$ and $G : V\to \R^\ell$ are locally DC and $F(U)\subset V$,
    then $G\circ F : V\to \R^\ell$ is also locally DC.
    \item[(iii)] If the derivative of a function $f:U\to\R^k$ is locally Lipschitz on $U$ (or in particular if $f$ is smooth), then $f$ is locally DC.
\end{itemize}
\end{lemma}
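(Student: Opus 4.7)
The plan is to establish the three items in order, in each step exploiting the convex-minus-convex representation of a DC function.

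For (i), the claim is essentially immediate. Fix an open convex set $U' \Subset U$ on which $H = h_1 - h_2$ and $J = j_1 - j_2$ with each $h_i, j_i$ convex. Then the algebraic identities
\begin{equation*}
H + J = (h_1 + j_1) - (h_2 + j_2), \qquad H - J = (h_1 + j_2) - (h_2 + j_1)
\end{equation*}
exhibit $H \pm J$ as differences of convex functions, since the sum of two convex functions is convex.

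For (iii), working componentwise I may assume $k = 1$. Given an open convex set $U' \Subset U$, let $L$ denote a Lipschitz constant of $Df$ on a neighborhood of $U'$. By Rademacher's theorem $f$ is twice differentiable almost everywhere on that neighborhood with operator-norm bound $|D^2 f| \leq L$. Consequently, the auxiliary function $g(x) := f(x) + \tfrac{L}{2}|x|^2$ has distributional Hessian $D^2 g = D^2 f + L\, I$, which is positive semidefinite. Hence $g$ is convex on $U'$, and the decomposition $f = g - \tfrac{L}{2}|x|^2$ displays $f$ as a DC function.

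For (ii), which is the main obstacle, I would reduce matters to Hartman's composition theorem in two stages. First, writing $G = G_1 - G_2$ componentwise with $G_1, G_2$ convex on a neighborhood of $F(U')$ and invoking (i), the task reduces to the scalar case in which $G$ is convex. The remaining assertion---that $g \circ F$ is DC whenever $g : V' \to \R$ is convex on $V' \Subset V$ and $F : U' \to V'$ is DC---is exactly Hartman's composition result \cite{Hartman1959}. The naive route through the affine-support representation $g(y) = \sup_{a}(a \cdot y - g^*(a))$ fails, because a supremum of DC functions need not be DC; the crux of Hartman's argument is to use the Lipschitz constant of $g$ on $V'$ together with a convex majorant of $|F|^2$ to construct a single convex ``envelope'' whose difference with another convex function reproduces $g \circ F$ on a smaller neighborhood. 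I would quote this result directly, referring also to the streamlined account of Vesel\'{y}--Zaj\'{i}\v{c}ek \cite{Vesely1989}.
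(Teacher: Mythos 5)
Your proposal is correct. Parts (i) and (iii) are proved directly and accurately (the algebraic identity in (i) is immediate; in (iii) the passage through $W^{2,\infty}_{\mathrm{loc}}$ and the observation that $g := f + \frac{L}{2}|x|^2$ has nonnegative distributional Hessian, hence is convex, is a standard and valid argument); for (ii), the reduction via (i) to a scalar convex outer function followed by an appeal to Hartman's composition theorem is the right move, and your remark that the naive supremum-of-affines route fails is well taken. The paper itself does not prove this lemma but simply cites Hartman \cite{Hartman1959}, Vesel\'{y}--Zaj\'{i}\v{c}ek \cite{Vesely1989}, and Lions--Seeger--Souganidis \cite{LionsIUMJ}, so your treatment is consistent with the paper and additionally supplies the elementary arguments for (i) and (iii) that the references would otherwise be asked to carry.
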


\subsection{Lipschitz and delta-convex submanifolds}

We first give a precise definition of (embedded) Lipschitz submanifold of any codimension in the Euclidean case.
A map $f:X\to Y$ between metric spaces $(X,d_X),(Y,d_Y)$ is said to be \emph{bi-Lipschitz} if there is $L>0$ such that $L^{-1}d_X(x,y) \leq d_Y(f(x),f(y)) \leq Ld_X(x,y)$ holds for any $x,y\in X$.
Recall that any bi-Lipschitz map is a homeomorphism onto its image.

\begin{definition}[Lipschitz submanifold: Euclidean case]\label{def:Lipsubmfd_Euclid}
  Let $1\leq d\leq m-1$.
  A subset $S\subset\R^m$ is called \emph{$d$-dimensional Lipschitz submanifold} if for any $p\in S$ there are an open neighborhood $U$ of $p$ in $\R^m$ and a $d$-dimensional affine subspace $P$ of $\R^m$ such that if $\pi:S\to P$ denotes the orthogonal projection of $S$ to $P$, then the image $\pi(
  S\cap U)$ is open in $P$ and the restriction $\pi|_{S\cap U}$ defines a bi-Lipschitz map.
\end{definition}

Then we define DC submanifolds as a subclass of Lipschitz submanifolds.
A map $f:U\to\R^k$ on an open set $U\subset P^m$ in an $m$-dimensional affine subspace $P^m\subset\R^{m+\ell}$ is said to be DC if for some (in fact every) isometry $T$ on $\R^{m+\ell}$ with $T(\R^m\times\{\bo\})= P^m$, where $\bo$ denotes the origin, the map $f\circ T$ is DC
on the open subset $T^{-1}(U)$ of
$\R^m\times\{\bo\}\simeq\R^m$ (by interpreting $(x,\bo)\in \R^d\times\{\bo\}$ as $x\in \R^d$).





\begin{definition}[DC submanifold: Euclidean case]\label{def:DCsubmfd_Euclid}
  Let $1\leq d\leq m-1$.
  A subset $S\subset\R^m$ is called \emph{$d$-dimensional delta-convex (DC) submanifold} if for any $p\in S$ one can choose $U$ and $P$ in Definition \ref{def:Lipsubmfd_Euclid} with the additional property that the inverse map $(\pi|_{S\cap U})^{-1}:\pi(S\cap U)\to S\cap U$ is a DC map.
\end{definition}

In particular, a Lipschitz (resp.\ DC) submanifold of codimension one ($d=m-1$) is called \emph{Lipschitz} (resp.\ \emph{DC}) \emph{hypersurface}.
Hereafter we simply interpret a $0$-dimensional submanifold as a point.

\begin{remark}[Graphicality]
 Any Lipschitz submanifold $S\subset\R^m$ is by definition locally graphical in the sense that around each point of $S$, up to isometry, the set $S$ is locally of the form $$\left\{ (x',f_1(x'),\dots,f_{m-d}(x'))\in \R^m\mid x'=(x_1,\dots,x_d)\in U'\right\}$$
 for some Lipschitz functions $f_1,\dots,f_{m-d}$ on an open set $U'\subset\R^d$.
 If $S$ is DC, then it simply means that each function $f_j$ is DC.
\end{remark}

We will later give definitions of those submanifolds in a manifold that are independent of the choice of a local chart, see Section \ref{subsec:Lipsubmfd_Finsler}.

\subsection{Delta-convex hypersurface structure}

Recall that for $p\in\mathbf{R}^m\setminus N$,
$$d_N(p):=\inf_{q\in N}|q-p|.$$
The singular set $\Sigma(N)\subset\mathbf{R}^m\setminus N$ denotes the set of all nondifferentiable points of $d_N$.
We also recall the following key characterization:
Let $\pi_N:\mathbf{R}^m\setminus N \to 2^N$ be the projection map defined by
\begin{equation*}
  \pi_N(p):= \{q\in N \mid d_N(p)=|q-p|\}.
\end{equation*}
A well-known characterization (see e.g.\ \cite{Cannarsa2004}) is that
\begin{equation}\nonumber
  \Sigma(N)=\{p\in \R^m\setminus N \mid \#\pi_N(p)\geq2 \},
\end{equation}
where $\#$ denotes cardinality.
It is also well known and easy to prove that the map $\pi_N$ is set-valued upper semicontinuous, i.e., for any $p_j\to p$ in $M\setminus N$ and $\pi(p_j)\ni q_j\to q$ in $N$, we have $q\in\pi_N(p)$.
Finally, for an integer $k\geq2$ we introduce the subset $\Sigma_k(N)\subset\Sigma(N)$ of points that admit exactly $k$ nearest points to $N$:
\begin{equation}\label{eq:Sigma_2}
  \Sigma_k(N):=\{p\in\Sigma(N) \mid \#\pi_N(p)=k \}.
\end{equation}

The main goal of this section is to prove a more precise form of Theorem \ref{thm:main_singular_set} in the Euclidean case, given in terms of $\Sigma_2(N)$:

\begin{theorem}[DC hypersurface structure: Euclidean case]\label{thm:Euclidean_singular_set}
  Let $m\geq2$ and $N\subset\mathbf{R}^m$ be a closed subset.
  Then there exist at most countably many DC hypersurfaces $S_j\subset\R^m$ and $(m-2)$-dimensional DC submanifolds $S'_j\subset\R^m$ such that
  \begin{itemize}
    \item[(i)] $\Sigma_2(N) \subset S \subset \Sigma(N)$, where $S:=\bigcup_{j=1}^\infty S_j$,
    \item[(ii)] $\Sigma(N)\setminus S\subset\bigcup_{j=1}^\infty S'_j$
  \end{itemize}
\end{theorem}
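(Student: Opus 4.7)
The plan is to treat the two inclusions separately. Part (ii) follows from a higher-codimensional refinement of Theorem \ref{thm:Zajicek-Pavlica} applied to $d_N$, while (i) is the core of the argument and combines a local DC implicit function construction with a hierarchical countability argument.

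For (ii), observe that if $p\in\Sigma(N)\setminus\Sigma_2(N)$ then $\#\pi_N(p)\geq 3$, and three distinct unit vectors in $\R^m$ can never be collinear. Since for the semi-concave function $d_N$ the (super)differential satisfies $\partial d_N(p)=\conv\{(p-q)/|p-q|:q\in\pi_N(p)\}$, it has affine dimension at least two. Hence $\Sigma(N)\setminus\Sigma_2(N)\subset\Sigma^2(d_N)$, and the higher-codimensional part of Zaj\'{\i}\v{c}ek's theorem, which covers $\Sigma^k$ of a semi-concave function by countably many $(m-k)$-dimensional DC submanifolds, yields the family $\{S'_j\}$.

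For (i), I would first construct a local DC hypersurface through any fixed $p\in\Sigma_2(N)$ with $\pi_N(p)=\{q_1,q_2\}$. Choose $\epsilon<\tfrac{1}{2}|q_1-q_2|$ and set $V_i:=\overline{B(q_i,\epsilon)}\cap N$, so that $V_1\cap V_2=\emptyset$ and $\pi_{V_i}(p)=\{q_i\}$ (since $q_1,q_2$ are the only minimizers in all of $N$). The semi-concave functions $\tilde f_i(x):=d_{V_i}(x)$ are therefore differentiable at $p$ with $\nabla\tilde f_i(p)=(p-q_i)/d_N(p)$, whence $g:=\tilde f_1-\tilde f_2$ is DC with $g(p)=0$ and $\nabla g(p)=(q_2-q_1)/d_N(p)\neq 0$. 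Zaj\'{\i}\v{c}ek's implicit function theorem for DC functions then produces a neighborhood $U$ of $p$ in which $H:=\{g=0\}\cap U$ is a DC hypersurface. By upper semi-continuity of $\pi_N$ we may shrink $U$ so that $\pi_N(x)\subset V_1\cup V_2$ on $U$; then every $x\in H$ satisfies $d_N(x)=\tilde f_1(x)=\tilde f_2(x)$ with distinct minimizers in the disjoint sets $V_1,V_2$, whence $H\subset\Sigma(N)$.

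The main obstacle is the countability of the cover, because $H$ need not absorb all nearby points of $\Sigma_2(N)$: a neighboring $x\in\Sigma_2(N)$ may have both its minimizers sitting inside a single $V_i$ (the \emph{branching} phenomenon of Example \ref{ex:branch}), and $\mathcal{H}^{m-1}(\Sigma(N))$ may be locally infinite so that naive volume-based counting is unavailable. I would resolve this by a quantitative stratification: for each $n\in\mathbf{N}$ define
\[
\Sigma_2^{(n)}(N):=\bigl\{p\in\Sigma_2(N):\pi_N(p)=\{q_1,q_2\},\ |q_1-q_2|\geq 1/n,\ 1/n\leq d_N(p)\leq n,\ |p|\leq n\bigr\},
\]
so that $\Sigma_2(N)=\bigcup_n\Sigma_2^{(n)}(N)$. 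For $p\in\Sigma_2^{(n)}(N)$ run the construction above with $\epsilon=1/(4n)$: any $x\in\Sigma_2^{(n)}(N)\cap U$ has its two minimizers separated by $\geq 1/n>2\epsilon$, forcing them to lie one in each $V_i$ and hence $x\in H$. This cleaving property provides $r_p>0$ with $\Sigma_2^{(n)}(N)\cap B(p,r_p)\subset H$. A Lindel\"of argument then reduces the cover of $\Sigma_2^{(n)}(N)$ to countably many DC hypersurfaces, and summing over $n$ produces the desired family $\{S_j\}$.
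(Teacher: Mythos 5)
Your proof is correct, and the local construction (defining $V_i=\overline{B(q_i,\epsilon)}\cap N$, showing $g=d_{V_1}-d_{V_2}$ is DC with $\partial g(p)$ a nonzero singleton, invoking the DC implicit function theorem, and bounding $\Sigma(N)\setminus\Sigma_2(N)$ by $\Sigma^2(d_N)$ via Zaj\'{\i}\v{c}ek) is the same as the paper's. Where you diverge, and where your argument is arguably cleaner, is in the countability step. The paper proves a quantitative cleaving lemma (its Lemma~\ref{lem:biLipschitzpropagate}) stating that $\rad_N$ drops by a factor of~$\tfrac12$ on $(\Sigma(N)\cap B_{\delta(p)}(p))\setminus S_p$, with $\epsilon=\tfrac14\rad_N(p)$ adapted to $p$. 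It then stratifies $\Sigma_2(N)\cap K$ by dyadic-like ranges of $\rad_N$ \emph{relative to} $\rad_N(K)$, further sub-stratifies by a lower bound $\delta(p)\ge 1/j$ on the implicit-function neighborhood radius, and runs an explicit ``greedy'' compactness iteration to extract a finite cover of each $A'_{ij}$. You instead stratify $\Sigma_2(N)$ by the absolute bound $\rad_N(p)\ge 1/n$ and fix the cutting scale $\epsilon=1/(4n)$ uniformly on the stratum, so that the cleaving becomes an all-or-nothing statement ($\Sigma_2^{(n)}(N)\cap U\subset H$) rather than a radius-decay estimate; the countable cover then follows from Lindel\"of with no need to control $\delta(p)$ or invoke compactness. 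Both routes are sound; yours avoids the $\delta(p)\ge 1/j$ bookkeeping entirely, at the (modest) cost of not producing the sharper local finiteness that the paper's compactness argument yields on each $A'_{ij}$. One very minor remark: the extra constraints $1/n\le d_N(p)\le n$, $|p|\le n$ in your $\Sigma_2^{(n)}(N)$ are harmless but not used once you rely on Lindel\"of rather than compactness.
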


A key tool for proving Theorem \ref{thm:Euclidean_singular_set} is Vesel\'{y}--Zaj\'{i}\v{c}ek's DC implicit function theorem.
To state it we first recall some notions in nonsmooth analysis for the class of locally Lipschitz functions (cf.\ \cite[
Chapter 2]{Clarke1990}), including all DC functions and hence distance functions.
For a function $f:\mathbf{R}^d\to\mathbf{R}$ locally Lipschitz around a point $x$, let $D^*f(x)$ denote the {\em reachable gradient} of $f$ at $x$, which is defined by the set of all vectors that are limits of (classical) gradients $Df$:
\begin{equation*}
  D^*f(x) := \Big\{ \lim_{j\to\infty}Df(x_j) \ \Big|\ x_j\to x,\ \exists Df(x_j) \Big\}.
\end{equation*}
Note that the set $D^*f(x)$ is nonempty by Rademacher's theorem, bounded since $|Df|\leq L$ holds for the (local) Lipschitz constant $L$, and also closed by its definition using limits.
In addition, let $\partial f(x)$ denote the {\em generalized gradient} defined by the convex hull of the reachable gradient:
\begin{equation}\label{eq:generalized_gradient}
  \partial f(x) := \conv D^*f(x),
\end{equation}
which is also compact, cf.\ \cite[Theorem 17.2]{Rockafellar1970}.
Recall that $f$ is Lipschitz near $x$ and $\partial f(x)$ is a singleton if and only if $f$ is strictly differentiable at $x$ \cite[Proposition 2.2.4]{Clarke1990}.
If $f$ is in addition semi-concave, then $\partial f(x)$ coincides with the supergradient \cite[Theorem 3.3.6]{Cannarsa2004}.
Since a (semi-)concave function is differentiable if and only if the supergradient is a singleton there, we find that for a (semi-)concave function the differentiability and the strict differentiability are equivalent.

Then we have the following key implicit function theorem:

\begin{theorem}[DC implicit function theorem {\cite[Proposition 5.9]{Vesely1989}}]\label{thm:DC_IFT}
  Let $V\subset\R^m$ be an open subset and $f_1,\dots,f_k:V\to\R$ be DC functions, where $1\leq k\leq m-1$, and let $x_0\in V$.
  If $f_1(x_0)=\dots=f_k(x_0)=0$ and 
  if $\partial f_1(x_0),\dots,\partial f_k(x_0)$ are singletons whose elements are linearly independent,
  then there exists a neighborhood $U\subset V$ of $x_0$ in $\R^m$ such that $S:=\bigcap_{j=1}^k f_j^{-1}(0)\cap U$ is an $(m-k)$-dimensional DC submanifold of $\mathbf{R}^m$.
\end{theorem}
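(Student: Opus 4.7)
The plan is to realize the level set $S$ as a DC graph over an $(m-k)$-dimensional affine subspace by applying a DC inverse function theorem to a suitable augmented map. Since $\partial f_j(x_0)=\{v_j\}$ is a singleton, $f_j$ is strictly differentiable at $x_0$ with gradient $v_j$, and $v_1,\dots,v_k$ are linearly independent. Composing with a linear isomorphism of $\R^m$ sending $v_j$ to the standard basis vector $e_j$ for $j=1,\dots,k$ preserves the DC class by Lemma \ref{lem:DC_fundamental_properties}, so I may assume $v_j=e_j$.

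I would then define the augmented map $F:V\to\R^m$ by
\[
F(x):=(f_1(x),\dots,f_k(x),x_{k+1},\dots,x_m).
\]
Each coordinate of $F$ is DC (the last $m-k$ being linear), so $F$ itself is DC; moreover each component is strictly differentiable at $x_0$, so $F$ is strictly differentiable at $x_0$ with derivative equal to the identity matrix. I then apply a DC inverse function theorem (the scalar-output core of the Vesel\'y--Zaj\'i\v{c}ek framework) to obtain open neighborhoods $x_0\in U\subset V$ and $F(x_0)\in W\subset\R^m$ and a DC homeomorphism $G=F^{-1}:W\to U$.

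Because the last $m-k$ components of $F$ are coordinate projections, the same is true of $G$. Writing $\R^m=\R^k\times\R^{m-k}$ and $G(y,z)=(\phi(y,z),z)$ with $\phi:W\to\R^k$ DC, the set
\[
S=U\cap\bigcap_{j=1}^k f_j^{-1}(0) = \{(\phi(0,z),z)\mid z\in W_0\},
\]
where $W_0:=\{z\in\R^{m-k}\mid (0,z)\in W\}$ is open, is the graph of the DC map $z\mapsto\phi(0,z)$ over the affine subspace $\{0\}\times\R^{m-k}$. The orthogonal projection to that subspace is then clearly a bi-Lipschitz map whose inverse is the DC graph parameterization, and Definition \ref{def:DCsubmfd_Euclid} exhibits $S$ as an $(m-k)$-dimensional DC submanifold of $\R^m$.

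The main obstacle is the DC inverse function theorem invoked in the middle step, namely producing a local inverse to $F$ that is itself DC rather than merely bi-Lipschitz. Strict differentiability of $F$ with invertible derivative already yields a bi-Lipschitz local inverse through the classical contraction argument, but promoting bi-Lipschitz to DC requires showing that $F^{-1}$ admits a convex-minus-convex representation near $F(x_0)$. The natural route is to write $F=L+H$ near $x_0$ with $L$ linear invertible and $H$ DC of arbitrarily small Lipschitz constant (exploiting strict differentiability again), express $F^{-1}$ via the fixed-point identity $F^{-1}(y)=L^{-1}y-L^{-1}H(F^{-1}(y))$, and control the DC structure of the Picard iterates under suitable stability and limit properties of the DC class. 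Making this rigorous — in particular ensuring that the limit of the iteration remains a difference of convex functions rather than merely Lipschitz — is the analytic heart of the statement.
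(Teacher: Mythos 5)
Your argument is correct, but it goes by a genuinely different (though closely related) route than the paper. The paper's proof of Theorem~\ref{thm:DC_IFT} (see Remark~\ref{rem:DC_IFT}) simply rotates coordinates by a suitable $A\in O(m)$ and invokes the DC \emph{implicit} function theorem of Vesel\'y--Zaj\'i\v{c}ek \cite[Proposition~5.9]{Vesely1989} directly. You instead augment $(f_1,\dots,f_k)$ to a self-map $F$ of $\R^m$ and invoke a DC \emph{inverse} function theorem; this is Vesel\'y--Zaj\'i\v{c}ek's \cite[Theorem~5.2]{Vesely1989}, which the present paper itself uses in exactly the same augmented-map fashion in the proof of Proposition~\ref{prop:DCcurve_singular_admit_sector} and in Lemma~\ref{lem:well_defined_definition_DC}. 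Your reduction is sound: strict differentiability of $F$ at $x_0$ with invertible Jacobian, a DC inverse $G$, the graph description $S=\{(\phi(0,z),z)\}$, and a final appeal to the coordinate-invariance of DC submanifolds (Lemma~\ref{lem:well_defined_definition_DC}, needed because your normalizing linear map need not be orthogonal) or, more simply, an orthogonal normalization that leaves the Jacobian merely block-triangular invertible rather than the identity. What each route buys: the paper's citation of Proposition~5.9 is shorter and avoids the augmentation; your inverse-function route is more elementary in spirit and matches the technique used elsewhere in the paper, so it would fit naturally.

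Where you should stop, however, is the last paragraph. The Picard-iteration argument for \emph{proving} the DC inverse function theorem does not go through as sketched: the class of DC functions on a domain is not closed under uniform (or even $C^0_{\mathrm{loc}}$) limits, so even if each iterate $G_n$ is DC you cannot conclude that $G=\lim G_n$ is. One must instead control a quantitative DC seminorm (e.g.\ the total variation of the gradient, or a control-function norm in the sense of Vesel\'y--Zaj\'i\v{c}ek) uniformly along the iteration, and this is precisely the nontrivial content of \cite[Theorem~5.2]{Vesely1989}. So treat the DC inverse function theorem as a black box to be cited, exactly as Theorem~\ref{thm:DC_IFT} itself treats \cite[Proposition~5.9]{Vesely1989}; attempting to rederive it from the contraction mapping principle is where the proposal would genuinely fail.
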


\begin{remark}\label{rem:DC_IFT}
    The precise statement of \cite[Proposition 5.9]{Vesely1989} is the following: If $U\subset\R^n\times\R^k$ is an open set, $(a,b)\in U$ and $G:U\to\R^k$ is a locally DC mapping such that $G(a,b)=0$ and $\partial(G(a,\cdot))$ contains surjective mappings only, then there are neighborhoods $U_a\subset\R^n$ and $U_b\subset\R^k$ of $a$ and $b$, respectively, and a DC mapping $\varphi:U_a\to U_b$ such that for any $x\in U_a$ and $y\in U_b$, one has $G(x,y)=0$ if and only if $y=\varphi(x)$.
    Theorem \ref{thm:DC_IFT} can be regarded as a special case of \cite[Proposition 5.9]{Vesely1989} by taking $n=m-k$ and $G=(f_1,\dots,f_k)\circ A$ with a suitable $A\in O(m)$ so that $\partial(G(a,\cdot))$ has a single element of a regular $k\times k$ matrix.
\end{remark}

For Theorem \ref{thm:Euclidean_singular_set} we will only use the special case of $k=1$, but later also discuss the general case.

We also introduce a radius function to quantify singular points.
For each $p\in\Sigma(N)$ we define the radius function by
\begin{equation}\label{eq:rad}
  \rad_N(p):=\max_{q_1,q_2\in\pi_N(p)} |q_1-q_2|.
\end{equation}
In the Euclidean case, $\rad_N(p)>0$ holds for any $p\in\Sigma(N)$ (although this is not the case if $M$ is general).
Notice that if $p\in\Sigma_2(N)$, then $\rad_N(p)$ denotes exactly the distance of two points in $\pi_N(p)$.

Now we prove the key fact that from any point in $\Sigma_2(N)$ a hypersurface propagates along $\Sigma(N)$, with the quantitative cleaving property that the residual part has relatively small radii.

\begin{lemma}\label{lem:biLipschitzpropagate}
  For any $p\in \Sigma_2(N)$, there exist a positive number $\delta(p)>0$ and a DC hypersurface $S_p\subset\mathbf{R}^m$ such that $S_p\subset\Sigma(N)$ and such that $\rad_N(y)\leq \frac{1}{2}\rad_N(p)$ holds for any $y\in (\Sigma(N)\cap B_{\delta(p)}(p))\setminus S_p$, where $B_r(p):=\{x\in \mathbf{R}^m \mid |p-x|<r\}$.
\end{lemma}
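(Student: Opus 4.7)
The plan is to realize $S_p$ as the zero set of a single DC function obtained by splitting $N$ into its two nearest-point branches at $p$. Write $\pi_N(p)=\{q_1,q_2\}$, set $r:=\rad_N(p)=|q_1-q_2|>0$, and fix $\varepsilon:=r/4$. Define the closed subsets $N_j:=N\cap\overline{B_\varepsilon(q_j)}$ for $j=1,2$. Since $\varepsilon<d_N(p)$, each $N_j$ is bounded away from $p$, so $d_{N_j}$ is locally semi-concave, hence locally DC, on a neighborhood of $p$. Let $F:=d_{N_1}-d_{N_2}$, a DC function near $p$ with $F(p)=0$.

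Next I verify the hypotheses of Vesel\'y--Zaj\'{\i}\v{c}ek's DC implicit function theorem (Theorem \ref{thm:DC_IFT}) for $F$. Because $\pi_N(p)=\{q_1,q_2\}$ and $q_{3-j}\notin\overline{B_\varepsilon(q_j)}$, one checks directly that $\pi_{N_j}(p)=\{q_j\}$; hence $d_{N_j}$ is classically differentiable at $p$ with gradient $(p-q_j)/|p-q_j|$. Semi-concavity of $d_{N_j}$ makes this differentiability strict, so $\partial d_{N_j}(p)$ is a singleton, and by the Clarke sum rule for strictly differentiable summands one obtains $\partial F(p)=\{v\}$ with $v:=(p-q_1)/|p-q_1|-(p-q_2)/|p-q_2|$. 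Since $|p-q_1|=|p-q_2|$ and $q_1\neq q_2$, the vector $v$ is nonzero. Theorem \ref{thm:DC_IFT} then produces a neighborhood $U$ of $p$ on which $F^{-1}(0)\cap U$ is a DC hypersurface, and I take $S_p:=F^{-1}(0)\cap B_{\delta(p)}(p)$ for $\delta(p)>0$ small enough that $B_{\delta(p)}(p)\subset U$ and the additional constraint below holds.

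The inclusion $S_p\subset\Sigma(N)$ and the radius-halving both follow from upper semicontinuity of $\pi_N$ together with the separation of the two branches. Let $N_3:=N\setminus(N_1\cup N_2)$. Since $\pi_N(p)\cap N_3=\emptyset$, a standard compactness argument applied to $\{q\in N:|p-q|\leq d_N(p)+1\}$ gives $d_{N_3}(p)>d_N(p)$, so after shrinking $\delta(p)$ one has $d_{N_3}(y)>d_N(y)$ for every $y\in B_{\delta(p)}(p)$; consequently $\pi_N(y)\subset N_1\cup N_2$ and $d_N(y)=\min(d_{N_1}(y),d_{N_2}(y))$. If $y\in S_p$ then $d_{N_1}(y)=d_{N_2}(y)=d_N(y)$, so $\pi_N(y)$ meets both $N_1$ and $N_2$ and thus $y\in\Sigma(N)$. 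If instead $y\in\Sigma(N)\cap B_{\delta(p)}(p)\setminus S_p$, then $F(y)\neq0$ forces $\pi_N(y)\subset N_j$ for a single $j$, and this $\pi_N(y)$ lies inside the ball $\overline{B_\varepsilon(q_j)}$ of diameter $2\varepsilon=r/2$, giving $\rad_N(y)\leq \frac{1}{2}\rad_N(p)$.

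The main obstacle is this last quantitative cleaving. Building $S_p$ as a DC hypersurface is essentially a routine application of Theorem \ref{thm:DC_IFT} once one localizes to the two branches $N_1,N_2$. What is more delicate is pinning down $\varepsilon=\rad_N(p)/4$ in such a way that upper semicontinuity of $\pi_N$ converts into the definite factor $1/2$ in the residual radius; this quantitative reduction is what enables the subsequent countable-covering argument to proceed by iteration over dyadic scales of $\rad_N$.
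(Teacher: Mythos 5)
Your proposal is correct and follows essentially the same route as the paper: both split $N$ into the two compact branches $N_j=N\cap\overline{B_{\rad_N(p)/4}(q_j)}$, set $f=d_{N_1}-d_{N_2}$, obtain $\partial f(p)=\{v_1-v_2\}\neq\{0\}$ from the uniqueness of projections onto $N_j$, and invoke the DC implicit function theorem, with the radius-halving coming from the triangle inequality on a branch. The only cosmetic difference is that you localize $\pi_N$ into $N_1\cup N_2$ by showing $d_{N\setminus(N_1\cup N_2)}(p)>d_N(p)$ and using Lipschitz continuity, whereas the paper cites set-valued upper semicontinuity of $\pi_N$ directly; these are equivalent.
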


\begin{proof}
  Let $\pi_N(p)=\{q_1,q_2\}$ and $N_j:=N\cap\{ y\in \mathbf{R}^m \mid |q_j-y|\leq \frac{1}{4}\rad_N(p) \}$, where $j=1,2$.
  The sets $N_1$ and $N_2$ are compact.
  In addition, by $\rad_N(p)=|q_1-q_2|>0$, we have
  \begin{equation}\label{eq10}
    N_1\cap N_2=\emptyset,
  \end{equation}
  and also $N_1$ and $N_2$ are (relative) neighborhoods of $q_1$ and $q_2$ in $N$, respectively.

  Let $f:=d_{N_1}-d_{N_2}=d(N_1,\cdot)-d(N_2,\cdot)$.
  Since $\pi_N$ is set-valued upper semicontinuous, there is $r>0$ such that for any $x\in B_r(p)$ we have $\pi_N(x)\subset N_1\cup N_2$, and in particular $d_N(x)=\min\{d_{N_1}(x),d_{N_2}(x)\}$.
  Hence, if $x\in B_r(p)$ and $f(x)=0$, then $d_{N_1}(x)=d_{N_2}(x)=d_N(x)$ and hence, by \eqref{eq10}, $\#\pi_N(x)\geq2$.
  Therefore, for any $\delta\in(0,r]$,
  \begin{equation*}
    S_{p,\delta} := f^{-1}(0)\cap B_\delta(p)\subset \Sigma(N).
  \end{equation*}
  In addition, if $x\in(\Sigma(N)\cap B_\delta(p))\setminus S_{p,\delta}$, then either $\pi_N(x)\subset N_1$ or $\pi_N(x)\subset N_2$ holds (depending on the sign of $f(x)$) and hence $\rad_N(x)\leq\frac{1}{2}\rad_N(p)$ holds by definition of $N_j$ and the triangle inequality.

  We finally prove that for a suitable positive number $\delta:=\delta(p)\in(0,r]$ the set $S_p:=S_{p,\delta(p)}$ is a DC hypersurface.
  The function $f=d_{N_1}-d_{N_2}$ is DC by Lemma \ref{lem:DC_fundamental_properties} (i).
  Since $p$ admits a unique $N_j$-segment from $q_j$ for $j=1,2$, letting $v_j:=\frac{p-q_j}{|p-q_j|}$, we have $\partial d_{N_j}(p)=\{v_j\}$ (cf.\ \cite[Corollary 3.4.5 (iv)]{Cannarsa2004}) and in particular $d_{N_j}$ is differentiable at $p$.
  Hence $\partial f(p)=\{v_1-v_2\}$; indeed, $f$ is also differentiable at $p$ and $D f(p)=v_1-v_2$ so that $v_1-v_2\in\partial f(p)$, while from the general inclusion property we obtain $\partial f(p)\subset \partial d_{N_1}(p) - \partial d_{N_2}(p)=\{v_1-v_2\}$.
  Since $0\not\in\partial f(p)$ by $v_1\neq v_2$, and since $f(p)=0$, Theorem \ref{thm:DC_IFT} with $k=1$ implies that there is a small positive number $\delta(p)>0$ such that $S_p=f^{-1}(0)\cap B_{\delta(p)}(p)$ is a DC hypersurface.
\end{proof}

Lemma \ref{lem:biLipschitzpropagate} claims that from any point $p$ of $\Sigma_2(N)$ a DC hypersurface $S_p$ propagates along $\Sigma(N)$.
It looks simple but is somewhat delicate: It is claimed neither that $S_p\subset\Sigma_2(N)$ nor that $\Sigma_2(N)\cap B_{\delta(p)}(p)\subset S_p$.
The following example is helpful for understanding these facts, cf.\ Figure \ref{fig:branch}:

\begin{example}[Branched singular set]\label{ex:branch}
  Define a closed set $N\subset\mathbf{R}^2$ by
  $$N:=\{(\pm1,\tfrac{1}{k})\in\mathbf{R}^2 \mid k\in\mathbf{Z}_{>0} \} \cup \{(\pm1,0)\}.$$
  Then, letting $a_k:=\frac{1}{2}(\frac{1}{k}+\frac{1}{k+1})$ and
  $$X:=\{ (x,a_k)\in\mathbf{R}^2 \mid x\in\mathbf{R},\ k\in\mathbf{Z}_{>0} \},\quad Y:=\{(0,y)\in\mathbf{R}^2 \mid y\in\mathbf{R}\},$$
  we can explicitly represent the singular sets $\Sigma(N)$ and $\Sigma_2(N)$:
  $$\Sigma(N) = X \cup Y, \quad \Sigma(N)\setminus\Sigma_2(N) = X \cap Y =\{(0,a_k)\in\mathbf{R}^2 \mid k\in\mathbf{Z}_{>0} \}.$$
  Note that $\rad_N(p)\leq\frac{1}{2}$ for $X\setminus Y$, while $\rad_N(p)\geq2$ for $p\in Y$.
  Now we look at the origin $p=(0,0)$.
  Notice that $p\in\Sigma_2(N)$, and hence a Lipschitz graph $S_p$ propagates along $\Sigma(N)$ by Lemma \ref{lem:biLipschitzpropagate}.
  In addition, since $\rad_N(p)=2$, by the radius estimate in Lemma \ref{lem:biLipschitzpropagate} we deduce that $Y\cap B_{\delta(p)}(p) \subset S_p \cap B_{\delta(p)}(p)$; by the bijectivity for $S_p$ we thus find that $Y\cap B_{\delta(p)}(p) = S_p\cap B_{\delta(p)}(p)$, i.e., the curve $S_p$ must be a vertical segment along $Y$ in $B_{\delta(p)}(p)$.
  Therefore, this curve $S_p$ passes through infinitely many points of $\Sigma(N)\setminus\Sigma_2(N)=X\cap Y$ so that $S_p\not\subset\Sigma_2(N)$, and also there remain infinitely many branches $(\Sigma_2(N)\cap B_{\delta(p)}(p))\setminus S_p = (X\setminus Y)\cap B_{\delta(p)}(p)$ so that $\Sigma_2(N)\cap B_{\delta(p)}(p)\not\subset S_p$.
\end{example}

\begin{center}
    \begin{figure}[htbp]
      \includegraphics[width=200pt]{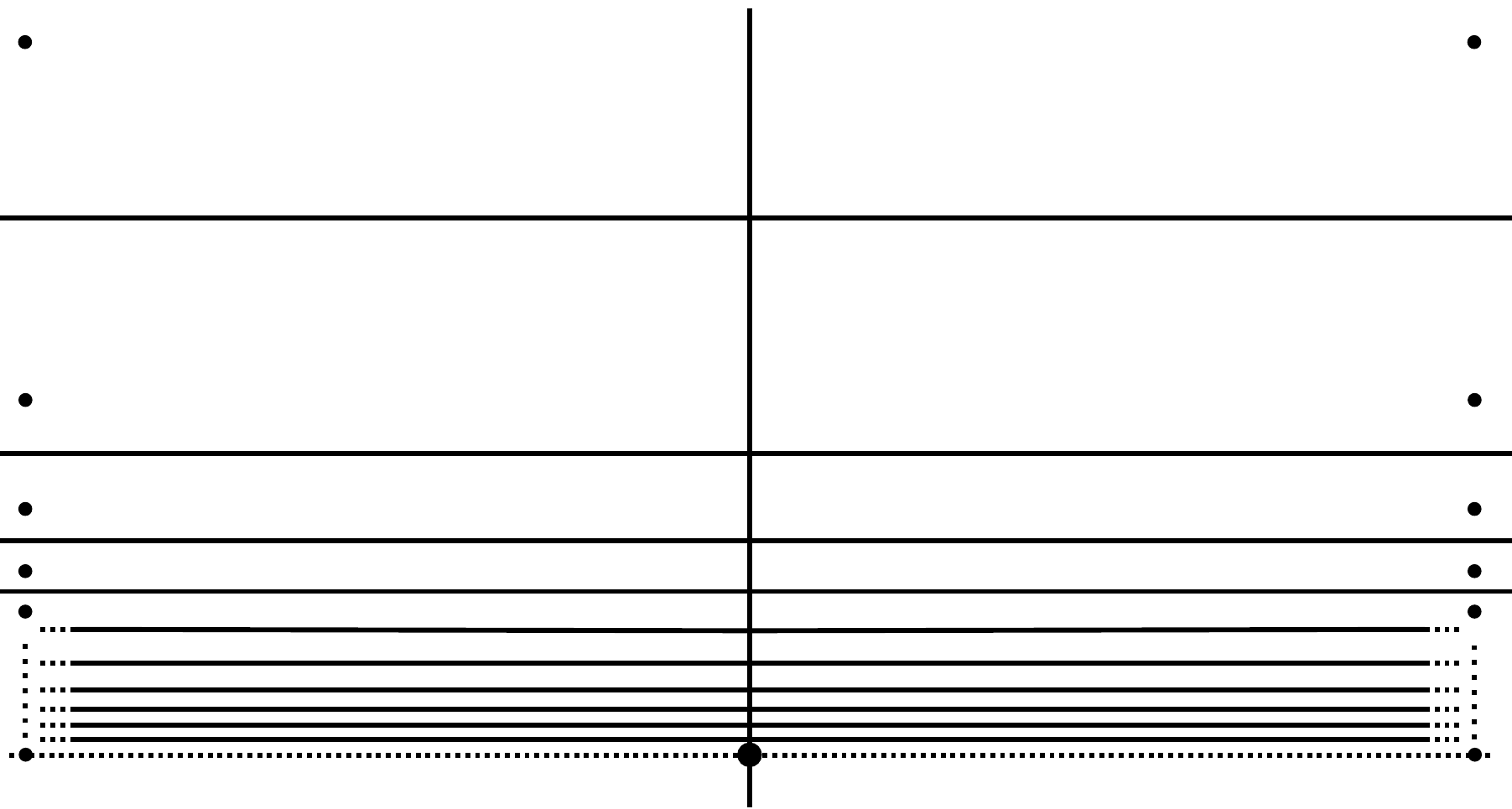}
      \caption{Branched singular set.}
      \label{fig:branch}
  \end{figure}
\end{center}

With Lemma \ref{lem:biLipschitzpropagate} at hand, we are now in a position to prove Theorem \ref{thm:Euclidean_singular_set}.

\begin{proof}[Proof of Theorem \ref{thm:Euclidean_singular_set}]
  We first prove assertion (i), that is, there is an at most countable union $S$ of DC hypersurfaces such that $\Sigma_2(N)\subset S\subset\Sigma(N)$, and then (independently) verify that $\Sigma(N)\setminus\Sigma_2(N)$ is covered by a countable union of DC submanifolds of codimension two.
  These two claims immediately imply assertion (ii) since $\Sigma(N)\setminus S \subset \Sigma(N)\setminus\Sigma_2(N)$.

  {\em Step 1: Countable covering by DC hypersurfaces.}
  We may suppose that $\Sigma_2(N)\neq\emptyset$ without loss of generality since otherwise Step 2 directly completes the proof.
  Let $K\subset\mathbf{R}^m$ be any compact subset such that $\Sigma_2(N)\cap K\neq\emptyset$.
  We define
  \begin{equation}\label{eq:rad_K}
    \rad_N(K):=\max\{ \rad_N(p) \mid p\in \Sigma(N)\cap K\}.
  \end{equation}
  For each positive integer $i$,
  \begin{equation}\label{eq:A_i}
    A_i := \left\{ p\in\Sigma_2(N)\cap K \ \Big|\ \tfrac{1}{i+1}\rad_N(K) < \rad_N(p) \leq \tfrac{1}{i}\rad_N(K) \right\}.
  \end{equation}
  Finally, for each positive integers $i,j$, we define
  $$A'_{ij}:=\{p\in A_i \mid \delta(p)\geq 1/j \},$$
  where $\delta(p)$ is a possible choice in Lemma \ref{lem:biLipschitzpropagate}.

  We prove that for each $i,j$ the set $A'_{ij}$ is covered by an at most finite union of DC hypersurfaces contained in $\Sigma(N)$.
  We may assume that $A'_{ij}$ is nonempty since otherwise obvious.
  Take an arbitrary point $p_1\in A'_{ij}$, and also a number $\delta(p_1)>0$ and a DC hypersurface $S_{p_1}\subset\Sigma(N)$ in Lemma \ref{lem:biLipschitzpropagate}.
  Then, by definition of $A_i$ and by the radius estimate in Lemma \ref{lem:biLipschitzpropagate}, $(\Sigma(N)\cap B_{\delta(p_1)}(p_1))\setminus S_{p_1}$ and $A_i$ are disjoint.
  Therefore,
  $$A'_{ij}\setminus S_{p_1} \subset A_i\setminus S_{p_1} \subset K\setminus B_{\delta(p_1)}(p_1).$$
  If $A'_{ij}\setminus S_{p_1}$ is empty, then the proof is complete, while if nonempty, then we can choose a point $p_2\in A'_{ij}\setminus S_{p_1}$ and a parallel procedure implies that
  $$\textstyle A'_{ij}\setminus\bigcup_{k=1}^2 S_{p_k}\subset K\setminus\bigcup_{k=1}^2 B_{\delta(p_k)}(p_k).$$
  If the left-hand side is nonempty, then we choose $p_3$ and continue this procedure.
  This inductive procedure terminates in finite steps; indeed, at step $\ell$ we need to choose a point $p_{\ell}$ included in the set $K\setminus\bigcup_{k=1}^{\ell-1}B_{\delta(p_k)}(p_k)$, which becomes empty in finite steps since $K$ is compact, and since $\delta(p_k)\geq1/j$ holds for all $k$ by definition of $A'_{ij}$ and also $p_k\in K\setminus\bigcup_{k'=1}^{k-1}B_{\delta(p_{k'})}(p_{k'})\subset K\setminus\bigcup_{k'=1}^{k-1}B_{1/j}(p_{k'})$.
  Therefore, $A'_{ij}$ has the desired finite-covering.

  Then, since $A_i=\bigcup_{j=1}^\infty A'_{ij}$, the set $A_i$ is covered by an at most countable union of DC hypersurfaces contained in $\Sigma(N)$.
  In addition, since $\rad_N(p)>0$ holds for any $p\in\Sigma(N)$, it is clear that $\Sigma_2(N)\cap K=\bigcup_{i=1}^{\infty} A_i$.
  Therefore, $\Sigma_2(N)\cap K$ also has the same type of countable-covering.
  Finally, since $K$ is arbitrary, by taking a countable sequence of closed balls $B_k$ of radius $k$ we deduce that $\Sigma_2(N)=\bigcup_{k=1}^\infty\Sigma_2(N)\cap B_k$ also has the desired countable-covering $S$.

  {\em Step 2: Codimension $2$ covering of the residual part.}
  For any $p\in\Sigma(N)\setminus\Sigma_2(N)$ there are at least three distinct points $q_1,q_2,q_3\in\pi_N(p)$ contained in a round sphere of radius $d_N(p)$ centered at $p$.
  Hence the set $\Sigma(N)\setminus\Sigma_2(N)$ is contained in the set $\{p\in \R^m\setminus N \mid \dim\conv(\pi_N(p))\geq2 \}$ (which is equivalent to $\Sigma^2(d_N)$).
  Therefore, by \cite[Theorem 8]{Zajicek1983}, the set $\Sigma(N)\setminus\Sigma_2(N)$ is covered by a countable union of $(m-2)$-dimensional DC submanifolds.
\end{proof}

\subsection{Optimality of DC regularity}

Theorem \ref{thm:Zajicek-Pavlica} shows that the delta-convexity is indeed optimal regularity for the singular set of a general semi-concave function.
However it is not yet clear if the DC regularity in Theorem \ref{thm:main_singular_set} (or Theorem \ref{thm:2D_singular_set}) is also optimal, since the class of distance functions is strictly smaller than the class of semi-concave functions.
Here we construct an example which ensures that our DC assertion cannot be replaced with semi-concavity nor $C^1$-regularity.
A key point is that the class of locally DC functions on an interval is equivalent to the class of functions whose derivatives are locally of bounded variation (cf.\ \cite{LionsIUMJ}), while any semi-concave (resp.\ semi-convex) function can have derivative with only downward (resp.\ upward) jumps.

Hereafter we frequently use the following notation: For $g:[a,b]\to\R$,
\begin{equation*}
    \textup{Graph}(g):=\{(x,g(x))\in\R^2 \mid x\in[a,b] \}.
\end{equation*}

\begin{proposition}\label{prop:optimality}
    For any $\varepsilon\in(0,1)$ there exists a bounded open set $\Omega\subset\R^2$ with $C^{1,1}$-boundary such that the corresponding singular set $\Sigma(\R^2\setminus\Omega)$ is represented by
    $$\Sigma(\R^2\setminus\Omega) = \textup{Graph}(f),$$
    where $f:[0,1]\to\R$ is a DC function with the following property: There is a closed subset $C\subset [0,1]$ with measure $|C|\geq 1-\varepsilon$ such that for any $x\in C$ and any $\delta>0$, the restricted function $f|_{B_\delta(x)\cap [0,1]}$, where $B_\delta(x):=(x-\delta,x+\delta)$, has derivative with infinitely many upward and downward jumps.
    In particular, $f|_{B_\delta(x)\cap [0,1]}$ is not $C^1$ nor locally semi-concave nor semi-convex.
\end{proposition}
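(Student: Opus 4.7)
The proof will be constructive in two stages: first building a DC function $f$ with the prescribed derivative-jump structure, then engineering a bounded $C^{1,1}$-domain $\Omega$ whose singular set coincides with $\textup{Graph}(f)$. For $f$: take a fat Cantor set $C\subset[0,1]$ with $|C|\geq 1-\varepsilon$, obtained by removing a summable sequence of middle intervals of total length $\leq\varepsilon$, and let $\{I_k\}$ be the complementary open intervals. In each $I_k$ select a sequence of corner points $\{x_{k,j}\}_j$ accumulating at both endpoints of $I_k$, together with signs $\sigma_{k,j}\in\{\pm 1\}$ and magnitudes $a_{k,j}>0$ satisfying $\sum_{k,j}a_{k,j}<\infty$, chosen so that both sign classes $\{x_{k,j}:\sigma_{k,j}=+1\}$ and $\{x_{k,j}:\sigma_{k,j}=-1\}$ have every point of $C$ as an accumulation point. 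Define
\[
f'(x):=c_0+\sum_{x_{k,j}<x}\sigma_{k,j}\,a_{k,j}, \qquad f(x):=\int_0^x f'(t)\,dt.
\]
Then $f'$ is of bounded variation with jump $\sigma_{k,j}a_{k,j}$ at each $x_{k,j}$, so by Jordan decomposition $f'=g-h$ with $g,h$ increasing, and hence $f=G-H$ with $G,H$ convex, so $f$ is DC. Moreover, for every $x\in C$ and every $\delta>0$ the restriction $f|_{B_\delta(x)\cap[0,1]}$ has infinitely many jumps of $f'$ of each sign, which immediately rules out $C^1$, semi-concavity and semi-convexity.

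For $\Omega$, I take $\partial\Omega$ to consist of two $C^{1,1}$ curves at perpendicular distance $r$ from $\textup{Graph}(f)$, joined by short end-caps near $x=0,1$, where $r>0$ is chosen small compared to $(\sup|f'|)^{-1}$ and to the jump sizes $a_{k,j}$. On the smooth portions of $\textup{Graph}(f)$, these are the two parallel offsets $\Gamma_\pm=\{(x,f(x))\pm r\,n(x)\}$ with unit upward normal $n(x)=(-f'(x),1)/\sqrt{1+f'(x)^2}$. At each corner $(x_{k,j},f(x_{k,j}))$, depending on the sign of the jump, the offset on the convex side of the corner naturally extends by a $C^{1,1}$ arc of a circle of radius $r$ centered at the corner, which I insert into $\partial\Omega$; on the opposite side, where a self-intersection or reflex corner would appear in the naive offset, I replace a small neighborhood of the singular point with a local $C^{1,1}$ mollification tuned to preserve the nearest-boundary-point structure elsewhere. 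The resulting $\partial\Omega$ is a bounded $C^{1,1}$ Jordan curve bounding $\Omega$.

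The main obstacle is proving $\Sigma(\R^2\setminus\Omega)=\textup{Graph}(f)$. The inclusion $\supset$ is immediate: each $(x,f(x))$ has two distinct points of $\partial\Omega$ realizing the distance $r$ (one on each offset at smooth $x$, or two symmetric points on an inserted corner arc at a corner). The reverse inclusion $\subset$ is delicate because corners of a medial axis classically spawn Voronoi-bisector rays into the convex cone at each corner, which would add spurious singular points. The decisive point is that the corner arc, having radius exactly equal to the offset distance $r$ and centered at $(x_{k,j},f(x_{k,j}))$, forces every point in the convex cone at this corner to have the corner itself as its unique nearest point on $\textup{Graph}(f)$, and its two nearest points on $\partial\Omega$ lie symmetrically on the inserted arc—so the non-uniqueness is confined to $\textup{Graph}(f)$. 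A parallel local argument on the mollified side, where the rounding scale is chosen small compared to $r$, ensures no new bisector branch escapes into $\Omega$ there either, completing the verification.
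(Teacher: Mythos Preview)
Your first stage (building $f$) is fine and is essentially what the paper ends up with. The genuine gap is in the second stage: the $r$-offset of $\textup{Graph}(f)$, even after inserting circular arcs on the convex sides and mollifying the reflex corners on the concave sides, does \emph{not} have $\textup{Graph}(f)$ as its medial axis.

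To see why, test your construction on a single corner. Take $f(x)=|x|$ near $x=0$ (an upward kink) and form the tube $\Omega=\{d(\textup{Graph}(f),\cdot)<r\}$. On the lower (convex) side you correctly get a circular arc of radius $r$ centered at the vertex, and no spurious branch appears there. On the upper (concave) side the two offset lines meet at a reflex corner $Q=(0,r\sqrt{2})$. Now look at a point $p=(s,s)$ on the right arm with $0<s<r/\sqrt{2}$. Its lower offset point $(s+r/\sqrt{2},\,s-r/\sqrt{2})$ lies on $\partial\Omega$ at distance $r$. But its \emph{upper} offset point $(s-r/\sqrt{2},\,s+r/\sqrt{2})$ is at distance $s\sqrt{2}<r$ from the \emph{left} arm, so it lies in the interior of $\Omega$ and is not a boundary point at all; the nearest upper boundary point to $p$ is the reflex corner $Q$, at distance strictly greater than $r$. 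Hence $p$ has a \emph{unique} nearest point on $\partial\Omega$ and $p\notin\Sigma(\R^2\setminus\Omega)$, even though $p\in\textup{Graph}(f)$. Conversely, points $(0,t)$ with small $t>0$ are equidistant from the two lower offset lines and \emph{do} lie on the medial axis, even though they are not on $\textup{Graph}(f)$. So near every corner the medial axis leaves $\textup{Graph}(f)$ and picks up an extra bisector segment plus parabolic arcs.

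Your mollification of the reflex corner does not repair this: the deviation is caused by the two arms' offsets interfering with each other near the vertex, which is a first-order geometric effect independent of how you round off $Q$. Making the rounding scale small only localizes the modification near $Q$, far from where the failure occurs.

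The paper avoids this by going in the opposite direction: it starts from the stadium $\{d([0,1]\times\{0\},\cdot)<1\}$, whose medial axis is the base segment, and then modifies $\partial\Omega$ by explicit $C^{1,1}$ bumps built from circular arcs of radii $1$ and $1+\delta$. Each bump is designed so that one can compute the resulting medial axis exactly (two parabolic pieces producing a single jump of $f'$), and the bumps are placed disjointly so their effects superpose. In other words, the paper engineers the boundary and reads off the singular set, rather than prescribing the singular set and trying to reverse-engineer the boundary.
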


\begin{proof}
    Let $\Omega_0\subset\R^2$ be a bounded $C^{1,1}$ domain defined by
    $$\Omega_0:=\{p\in\R^2 \mid d(S,p)<1 \},\quad S=[0,1]\times\{0\}\subset\R^2.$$
    Note that $\Sigma(\R^2\setminus\Omega_0)=S$, and the boundary $\partial\Omega_0$ can be decomposed as
    \begin{align*}
        \partial\Omega_0 &= F_+\cup F_-\cup R_1\cup R_2,\\
        F_+ &= [0,1]\times\{1\},\ F_- = [0,1]\times\{-1\},\\
        R_1 &= \{(x,y)\in \R^2 \mid x^2+y^2=1,\ x<0 \},\\
        R_2 &= \{(x,y)\in \R^2 \mid (x-1)^2+y^2=1,\ x>1 \}.
    \end{align*}
    We construct a domain $\Omega$ with the desired properties by modifying the flat part $F_\pm$ of the boundary of $\Omega_0$, while keeping the round part $R_1,R_2$.

    \textit{Step 1: Base function.}
    Given small $\delta>0$, say $\delta\in(0,\frac{1}{100})$, we consider the following even $C^{1,1}$ function composed of the line $\{y=1\}$ and a circular arc of radius $1+\delta$ centered at $(0,\delta)$ and two circular arcs of radius $1$ centered at $(\pm a_\delta,2)$, respectively:
    \begin{equation*}
        g_\delta(x):=
        \begin{cases}
            \delta+\sqrt{(1+\delta)^2-x^2} & (|x|\leq b_\delta),\\
            2-\sqrt{1-(x-a_\delta)^2} & (b_\delta \leq x \leq a_\delta),\\
            2-\sqrt{1-(x+a_\delta)^2} & (-a_\delta \leq x \leq -b_\delta),\\
            1 & (|x|\geq a_\delta),
        \end{cases}
    \end{equation*}
    where $a_\delta>0$ denotes a unique positive number such that the two elementary functions $\delta+\sqrt{(1+\delta)^2-x^2}$ and $2-\sqrt{1-(x-a_\delta)^2}$ have a (unique) point of tangency at $x=b_\delta$ with $b_\delta\in(0,a_\delta)$ (see also Figure \ref{fig:bump}).
    It is easy to see that
    \begin{equation}\label{eq:1030-1}
        a_\delta=2\sqrt{2\delta},\quad \lim_{\delta\to0}\|g_\delta-1\|_{C^1}=0,\quad 
        \sup_{\delta\in(0,\frac{1}{100})}\|g_\delta''\|_{L^\infty}<\infty.
    \end{equation}

    \begin{center}
        \begin{figure}[htbp]
          \includegraphics[width=200pt]{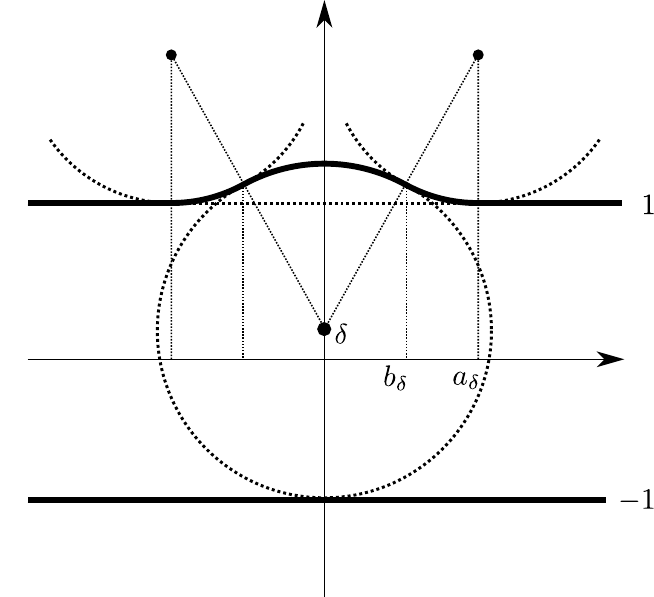}
          \includegraphics[width=180pt]{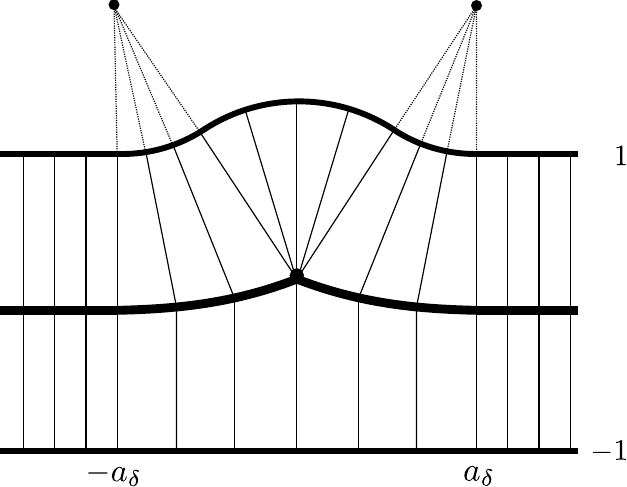}
          \caption{Bumped boundary and the corresponding singular set.}
          \label{fig:bump}
      \end{figure}
    \end{center}

    \textit{Step 2: Base construction.}
    To any interior point $x_0\in(0,1)$ and any $r_0>0$ such that $B_{r_0}(x_0)\subset (0,1)$, we associate $h_0:[0,1]\to\R$ by 
    $$h_0(x):=g_{\delta_0}(x-x_0),$$
    where $\delta_0>0$ is chosen to be so small that
    \begin{equation}\label{eq:jump2}
        a_{\delta_0}<r_0.
    \end{equation}
    Using this function, if we replace the part $F_+$ of $\partial\Omega_0$ with $\textup{Graph}(h_0)$, then the singular set is partly replaced by two pieces of parabola (as in Figure \ref{fig:bump}).
    More precisely, if we let $\Omega_0'$ denote the unique bounded domain such that
    $$\partial\Omega_0'=\textup{Graph}(h_0)\cup F_-\cup R_1\cup R_2,$$
    then $\partial\Omega_0'$ is of class $C^{1,1}$, and also we deduce that
    $$\Sigma(\R^2\setminus\Omega_0')=\textup{Graph}(f_0),$$
    where $f_0:[0,1]\to\R$ is defined by
    \begin{equation*}
        f_0(x):=
        \begin{cases}
            \frac{1}{8}(x-x_0-a_{\delta_0})^2 & (0\leq x-x_0 \leq a_{\delta_0}),\\
            \frac{1}{8}(x-x_0+a_{\delta_0})^2 & (-a_{\delta_0}\leq x-x_0 \leq 0),\\
            0 & (|x-x_0|\geq a_{\delta_0}),
        \end{cases}
    \end{equation*}
    by the following elementary geometric argument:
    The locus of the center of circles touching both the upper and lower boundary is clearly graphical.
    The locus is obviously horizontal whenever $|x-x_0|\geq a_\delta$.
    On the other hand, if $0\leq x-x_0\leq a_\delta$ or if $-a_\delta \leq x-x_0\leq 0$, the locus must be a suitable piece of parabola since it is a set of points equidistant from the line $\{y=-1\}$ and one of the circular arcs of radius $1$ centered at $(\pm a_\delta,2)$, or equivalently, equidistant from the line $\{y=-2\}$ and one of the points $(\pm a_\delta,2)$.
    In addition, geometrically, those parabolas must be tangent to the $x$-axis at $x=x_0\pm a_0$.
    Finally, the prefactor $\frac{1}{8}$ can be computed by using $f_0(x_0)=\delta$ and $a_\delta=2\sqrt{2\delta}$.
    
    Note that the derivative $f_0'$ has a downward jump at $x=x_0$, namely,
    \begin{equation}\label{eq:jump}
        f_0'(x_0+0)-f_0'(x_0-0)=-\frac{a_{\delta_0}}{2}.
    \end{equation}

    \textit{Step 3: Next construction.}
    Given an open interval $J\subset[0,1]$, we replace the part $F_\pm$ with certain new sets $G_\pm^J$ such that $F_\pm\setminus(J\times\R)=G_\pm^J\setminus(J\times\R)$ in the following way.
    
    Take a sequence $\{x_j\}_j\subset J$ and $\{r_j\}_j\in(0,1)$ such that  $B_{r_j}(x_j)$, $j=1,2,\dots$, are mutually disjoint subsets in $J$, and the subsequences $\{x_{3j}\}_j$ and $\{x_{3j+1}\}_j$ converge to the left endpoint and the right endpoint of $J$, respectively.
    Now, to each $j$, we associate functions $h_j$ and $f_j$ as in Step 2; namely $h_j:=h_0$ and $f_j:=f_0$ in Step 2 with $x_0:=x_j$ and $r_0:=r_j$.
    We also choose $\delta_j\in(0,\frac{1}{100})$ such that $a_{\delta_j}<r_j$ in view of \eqref{eq:jump2}.
    Note that $r_j\to0$ and $\delta_j\to0$.
    Then we define two functions
    $h^\pm_J:[0,1]\to\R$ by
    \begin{equation*}
        h_+^J(x):= \max_j h_{2j}(x), \quad h_-^J(x):= -\max_j h_{2j+1}(x). 
    \end{equation*}
    Clearly, $\pm h_\pm^J\geq 1$.
    In addition, we deduce that $\|h_\pm^J\|_{C^{1,1}}<\infty$ as follows: We only argue for $h_+^J$.
    Let $u_j:=h_{2j}-1$.
    Since $u_j\geq0$ and the supports of $u_j$'s are mutually disjoint, we can represent $h_+^J=1+\sum_{j=1}^\infty u_j$, and also we have $\|\sum_{j=N}^{N+M}u_j\|_{C^1}=\max_{0\leq k\leq M}\|u_{N+k}\|_{C^1}$.
    Then by \eqref{eq:1030-1} we obtain 
    $$\lim_{N\to\infty}\sup_{k\geq0}\|u_{N+k}\|_{C^1}=\lim_{N\to\infty}\sup_{k\geq0}\|g_{\delta_{N+k}}-1\|_{C^1}=0.$$
    Thus the partial sum $U_N:=\sum_{j=1}^Nu_j$ is convergent in $C^1$ as $N\to\infty$, and hence $h_+^J\in C^1$.
    In addition, again using the disjoint-support property of $u_j$'s, we have $\|U_N''\|_{L^\infty}=\max_{1\leq j\leq N}\|u_j''\|_{L^\infty}$.
    Hence for any $x,y\in [0,1]$,
    \begin{align*}
        |U'_N(x)-U'_N(y)|\leq \|U_N''\|_{L^\infty}|x-y|&=\max_{1\leq j\leq N}\|u_j''\|_{L^\infty}|x-y|\\
        &\leq \sup_{\delta\in(0,\frac{1}{100})}\|g_\delta''\|_{L^\infty}|x-y|.
    \end{align*}
    Letting $N\to\infty$, this with \eqref{eq:1030-1} implies that $|(h_+^J)'(x)-(h_+^J)'(y)|\leq C|x-y|$ for some universal $C>0$, and therefore $\|h_+^J\|_{C^{1,1}}<\infty$.
    
    Now we replace $F_\pm$ with
    $$G_\pm^J:=\textup{Graph}(h_\pm^J).$$
    Since the deformation of the boundary at step $j$ only affects the singular set in the region $B_{r_j}(x_j)\times\R$ (mutually disjoint for all $j$), if we let $\Omega^J$ denote the unique bounded domain such that
    $$\partial\Omega^J=G_+^J\cup G_-^J\cup R_1\cup R_2,$$
    then $\partial\Omega^J$ is of class $C^{1,1}$ and the singular set is of the form
    $$\Sigma(\R^2\setminus\Omega^J)=\textup{Graph}( f^J ), \quad f^J:=\max_j f_{2j}-\max_j f_{2j+1}:[0,1]\to\R,$$
    and the derivative of $f^J$ has infinitely many jumps; more precisely, it has jumps exactly at all $x_j$, downward at $x_{2j}$ and upward at $x_{2j+1}$; in particular, the subsequence $\{x_{6j}\}_j$ (resp.\ $\{x_{6j+3}\}_j$) consists of downward (resp.\ upward) jump points converging to the left endpoint of $J$, and also $\{x_{6j+4}\}_j$ (resp.\ $\{x_{6j+1}\}_j$) consists of downward (resp.\ upward) jump points converging to the right endpoint of $J$.

    \textit{Step 4: Final construction.}
    For any given $\varepsilon>0$ we take the (standard) fat Cantor set $C\subset[0,1]$ with measure $|C|\geq1-\varepsilon$ constructed by deleting mutually disjoint open subintervals $\{J_i\}_i$ from $[0,1]$.
    Let $h_\pm^{J_i},f^{J_i}:[0,1]\to\R$ are the functions constructed in Step 3 (with $J=J_i$).
    We define
    $$G_\pm:=\textup{Graph}(h_\pm), \quad h_+:=\max_i h_+^{J_i}, \quad h_-:=\min_i h_-^{J_i},$$
    and consider the unique bounded domain $\Omega$ such that 
    $$\partial\Omega=G_+\cup G_-\cup R_1\cup R_2.$$
    Similarly to Step 3, the deformation at each step $i$ only affects in the region $J_i\times\R$ (mutually disjoint for all $i$), and hence the boundary is still of class $C^{1,1}$ and we have
    $$\Sigma(\R^2\setminus\Omega)=\textup{Graph}( f ), \quad f:=\sum_i f^{J_i}:[0,1]\to\R.$$
    
    We finally show that the function $f$ has the desired properties.
    The fact that $f$ is DC follows by a direct computation that $f'$ is of bounded variation; indeed, the sum of all the jumps of $f'$ is finite thanks to \eqref{eq:jump2}, \eqref{eq:jump}, and the fact that $\sum_{j}r_j\leq |J|$ in Step 3 and $\sum_{i}|J_i|\leq \varepsilon$ in Step 4, while except at the jump points $|f''|\leq\frac{1}{4}$.
    Fix any $x\in C$ and $\delta>0$.
    Since $C$ is closed and nowhere dense (so has empty interior), the point $x$ can be approached by points in $[0,1]\setminus C$.
    Therefore the set $B_\delta(x)\cap[0,1]$ contains an endpoint $\bar{x}$ of a connected component $J_i$ of the set $[0,1]\setminus C$, and hence the restricted function $f|_{B_\delta(x)\cap[0,1]}$ has derivative with infinitely many jumps around $\bar{x}$, both upward and downward.
    The proof is complete.
\end{proof}

\subsection{Higher codimensional DC graphical propagation}

In a very parallel way to Lemma \ref{lem:biLipschitzpropagate}, we can also deduce a (conditional) DC graphical propagation of higher codimension.

\begin{theorem}[Higher codimensional propagation]\label{thm:high_biLipschitz}
  Let $p\in\Sigma_{k+1}(N)$ with $1\leq k\leq m$.
  Suppose that $p\in\Sigma_{k+1}(N)$ and $\dim\conv(\pi_N(p))=k$.
  Then there is an $(m-k)$-dimensional DC submanifold $S$ of $\R^m$ such that $p\in S\subset \Sigma(N)\setminus\bigcup_{j=2}^k\Sigma_j(N)$.
\end{theorem}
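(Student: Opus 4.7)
The plan is to mimic the proof of Lemma \ref{lem:biLipschitzpropagate} but now apply Theorem \ref{thm:DC_IFT} with general $k$ instead of $k=1$. Write $\pi_N(p) = \{q_0, q_1, \dots, q_k\}$ and set $r := d_N(p) = |p - q_j|$ for every $j$. Choose $\rho > 0$ small enough that the closed balls $\overline{B_\rho(q_j)}$ are mutually disjoint and intersect $N$ exactly in neighborhoods of the $q_j$, and put $N_j := N \cap \overline{B_\rho(q_j)}$. By the upper semicontinuity of $\pi_N$ there is an open neighborhood $V$ of $p$ such that $\pi_N(x) \subset N_0 \cup \dots \cup N_k$ for all $x \in V$, and in particular $d_N(x) = \min_j d_{N_j}(x)$ on $V$.

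Define the DC functions $f_j := d_{N_0} - d_{N_j}$ on $V$ for $j = 1, \dots, k$. Each $d_{N_j}$ is differentiable at $p$ because $p$ admits a unique $N_j$-segment, and its gradient there is the unit vector $v_j := (p - q_j)/r$ (cf.\ \cite[Corollary 3.4.5 (iv)]{Cannarsa2004}); consequently, as in the proof of Lemma \ref{lem:biLipschitzpropagate}, $\partial f_j(p) = \{v_0 - v_j\}$ is a singleton. The crucial observation is that $v_0 - v_j = (q_j - q_0)/r$, so the linear independence of $\{v_0 - v_1, \dots, v_0 - v_k\}$ is precisely the affine independence of $\{q_0, \dots, q_k\}$, which is exactly the hypothesis $\dim\conv(\pi_N(p)) = k$. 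Since $f_j(p) = 0$ for all $j$, Theorem \ref{thm:DC_IFT} applies and furnishes a neighborhood $U \subset V$ of $p$ such that
\begin{equation*}
  S := \bigcap_{j=1}^k f_j^{-1}(0) \cap U
\end{equation*}
is an $(m - k)$-dimensional DC submanifold of $\R^m$ containing $p$.

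Finally, for any $x \in S$ the equalities $d_{N_0}(x) = d_{N_1}(x) = \dots = d_{N_k}(x)$ hold, and combined with $d_N(x) = \min_j d_{N_j}(x)$ this forces $d_{N_j}(x) = d_N(x)$ for every $j = 0, \dots, k$. Since the $N_j$ are pairwise disjoint, $\pi_N(x)$ meets each $N_j$, so $\#\pi_N(x) \geq k + 1$. Thus $x \in \Sigma(N)$ and $x \notin \Sigma_j(N)$ for any $2 \leq j \leq k$, which is the desired inclusion $S \subset \Sigma(N) \setminus \bigcup_{j=2}^k \Sigma_j(N)$.

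The argument is almost entirely a restatement of Lemma \ref{lem:biLipschitzpropagate} with $k$ equations in place of one, and the only place where anything new is needed is the verification of the independence hypothesis of Theorem \ref{thm:DC_IFT}; that step, which I expect to be the only mildly delicate point, reduces to the clean identity $v_0 - v_j = (q_j - q_0)/r$ coming from the fact that all nearest points lie on a single sphere centered at $p$. No covering or countability argument is needed here because the statement is purely local around a single generic point $p$.
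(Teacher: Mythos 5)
Your proposal is correct and follows essentially the same route as the paper's proof: localize $N$ near each $q_j$, set $f_j := d_{N_0}-d_{N_j}$, compute $\partial f_j(p)=\{v_0-v_j\}$, invoke the DC implicit function theorem, and verify that each point in the common zero set has at least $k+1$ nearest points. The only genuine addition is that you make explicit the identity $v_0-v_j=(q_j-q_0)/r$ (valid since all nearest points lie on a sphere of radius $r$ about $p$), which cleanly identifies the linear independence hypothesis of Theorem \ref{thm:DC_IFT} with the condition $\dim\conv(\pi_N(p))=k$; the paper states this equivalence without exhibiting the identity.
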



\begin{proof}
  Since $p\in\Sigma_{k+1}(N)$, let $\pi_N(p)=\{q_0,\dots,q_k\}$.
  Let $N_j:=N\cap\{ y\in \mathbf{R}^m \mid |q_j-y|\leq \frac{1}{4}\min_{i\neq j}|q_i-q_j| \}$.
  The sets $N_0,\dots,N_k$ are compact and, by definition,
  \begin{equation}\label{eq:20}
    N_i\cap N_j=\emptyset \quad \text{for all $0\leq i<j\leq k$}.
  \end{equation}
  Let $f_j:=d_{N_0}-d_{N_j}=d(N_0,\cdot)-d(N_j,\cdot)$ for $1\leq j\leq k$.
  Since $\pi_N$ is set-valued upper semicontinuous, there is $r>0$ such that for any $x\in B_r(p)$ we have $\pi_N(x)\subset \bigcup_{j=0}^k N_j$.
  In particular, if $x\in B_r(p)$, then $d_N(x)=\min_{0\leq j\leq k}d_{N_j}(x)$, and hence if in addition $f_1(x)=\dots=f_k(x)=0$, then by \eqref{eq:20}
  we have $d_N(x)=d_{N_0}(x)=\dots=d_{N_k}(x)$ and $\#\pi_N(x)\geq k+1$, that is, $x\in\Sigma(N)\setminus\bigcup_{j=2}^k\Sigma_j(N)$.
  Hence, for any $\delta\in(0,r]$,
  \begin{equation*}
    S_{p,\delta} := \bigcap_{j=1}^k f_j^{-1}(0)\cap B_{\delta}(p) \subset \Sigma(N)\setminus\bigcup_{j=2}^k\Sigma_j(N).
  \end{equation*}
  Since $\pi_{N_j}(p)=\{q_j\}$ for each $j=0,\dots,k$, letting $v_j:=\frac{p-q_j}{|p-q_j|}$, we have $\partial d_{N_j}(p)=\{v_j\}$ and hence $\partial f_j(p)=\{v_0-v_j\}$.
  By the assumption that $\dim\conv(\pi_N(p))=k$, the unique element in $\partial f_1(p)\times\dots\times\partial f_k(p)$ is linearly independent.
  Combining this fact with $f_1(p)=\dots=f_k(p)=0$, we can apply Theorem \ref{thm:DC_IFT} to deduce that there is $\delta(p)>0$ such that $S_{p,\delta(p)}$ is an $(m-k)$-dimensional DC submanifold.
\end{proof}

The assumption in Theorem \ref{thm:high_biLipschitz} is equivalent to that the elements of $\pi_N(p)$ form a $k$-simplex.

Theorem \ref{thm:high_biLipschitz} strengthens the assertion of \cite[Corollary 6.4]{Albano1999} by imposing the stronger $k$-simplicity assumption.
The assumption that $\dim\conv(\pi_N(p))=k$ may be removed for $k=1,2$ as it is automatically satisfied; indeed, the case $k=1$ is nothing but Lemma \ref{lem:biLipschitzpropagate}, while for $k=2$, the three elements of $\pi_N(p)$ lie in a round sphere and hence cannot be colinear.
In particular, from any point in $\Sigma_3(N)$ a codimension-two DC submanifold propagates within $\Sigma(N)\setminus\Sigma_2(N)$.

On the other hand, Theorem \ref{thm:high_biLipschitz} is not sufficient for covering all parts of codimension two or higher.
For example, there is a simple example of $N$ such that any point in $\Sigma(N)\setminus\Sigma_2(N)$ does not satisfy the assumption of Theorem \ref{thm:high_biLipschitz}.

\begin{example}[Generic non-simplicial singularities]
  Let $m=3$.
  Let $k\geq3$, and $P\subset\R^2$ be the domain bounded by a regular $(k+1)$-gon centered at the origin.
  Consider the closed set $N:=(\R^2\setminus P)\times\R\subset\R^3$.
  Then by a simple observation we infer that $\Sigma(N)\setminus\Sigma_2(N)=\Sigma_{k+1}(N)=\{(0,0)\}\times\R\subset\R^3$, being a codimension-two submanifold.
  In this case all points $p\in\Sigma(N)\setminus\Sigma_2(N)=\Sigma_{k+1}(N)$ are not $k$-simplicial since $\pi_N(p)$ is contained in a plane.
  (If we replace $P$ with a disk, then we also notice that the set $\Sigma_\infty(N):=\{\#\pi_N(p)=\infty\}$ may not be negligible.)
\end{example}


In particular, the following problem (weaker than Problem \ref{prob:DCcovering}) is remained open:

\begin{problem}\label{prob:highercodimension}
  Let $m\geq3$ and $N\subset\R^m$ be a nonempty closed subset.
  Let $k$ be an integer such that $2\leq k\leq m-1$.
  Are there a countable family $\{S_j\}_{j=1}^\infty$ of DC submanifolds $S_j\subset\R^m$ of dimension $\geq m-k$, and a set $R\subset\Sigma(N)$ covered by a countable union of $(m-k-1)$-dimensional DC submanifolds, such that $\Sigma(N)=\bigcup_{j=1}^\infty S_j\cup R$?
\end{problem}

\subsection{Comparison with semi-concave function theory}\label{subsec:semiconcave}

As a concluding remark in the Euclidean case we compare our Theorem \ref{thm:Euclidean_singular_set} for distance functions with general theory of semi-concave functions.

In particular, we precisely recall a closely related result of Albano--Cannarsa \cite{Albano1999}.
In this part we let $\bd[A]$ denote the topological boundary of a subset $A\subset\mathbf{R}^m$ (to avoid confusion with the generalized gradient).
Let $K_C(x)$ denote the normal cone of a convex set $C\subset\mathbf{R}^m$ at $x\in C$ defined by $K_C(x):=\{q\in\mathbf{R}^m\mid \langle q,y-x \rangle\leq0,\ \forall y\in C\}$.
Albano--Cannarsa's result \cite[Theorem 5.1]{Albano1999} states that if a locally semi-concave function $u:\Omega\to\mathbf{R}$ has a point $x_0\in\Omega$ such that
\begin{equation}\label{eq:AlbanoCannarsa}
  \bd[\partial u(x_0)]\setminus D^*u(x_0)\neq\emptyset,
\end{equation}
then for any $p_0\in\bd[\partial u(x_0)]\setminus D^*u(x_0)$ there exist an open ball $B_\rho$ centered at the origin and a Lipschitz function $F:K_{\partial u(x_0)}(p_0)\cap B_\rho \to\Sigma(u)$ of the form
\begin{equation*}
  F(q)=x_0-q+o(|q|) \quad (|q|\to0).
\end{equation*}
This ``Lipschitz propagation'' already implies fine lower bounds such as
$$\inf\big\{\mathrm{diam}\big(\partial u(F(q))\big) \mid q\in K_{\partial u(x_0)}(p_0)\cap B_\rho \big\}>0,$$
and, for $\nu:=\dim K_{\partial u(x_0)}(p_0)$,
$$\liminf_{r\to+0}r^{-\nu}\mathcal{H}^\nu\big( F(K_{\partial u(x_0)}(p_0)\cap B_\rho)\cap B_r(x_0) \big)>0 \quad \Big(\Rightarrow \dim_{\mathcal{H}}\Sigma(u)\geq\nu \Big).$$
Condition \eqref{eq:AlbanoCannarsa} looks somewhat delicate, but by focusing on the distance function $d_N$ one obtains some general consequences.
For example, condition \eqref{eq:AlbanoCannarsa} holds for $u=d_N$ for any $x_0\in\Sigma_2(N)$
since in this case the reachable gradient $D^*d_N(x_0)$ consists of exactly two points, and hence the set $\partial d_N(x_0)=\bd[\partial d_N(x_0)]$ is a segment.
Therefore, for any point in $\Sigma_2(N)$ the above Lipschitz propagation occurs with dimension $\nu=m-1$ (see also \cite[Section 6]{Albano1999}).
We mention that condition \eqref{eq:AlbanoCannarsa} is also known to be necessary for propagation in a certain case, cf.\ \cite[Theorem 2]{Albano2002}.

It is now clarified that our Lemma \ref{lem:biLipschitzpropagate} is quite similar in spirit to Albano--Cannarsa's Lipschitz propagation result (with dimension $\nu=m-1$) and this is why we call our result ``DC graphical propagation'' in the introduction.

Finally we point out two important remarks: 
\begin{itemize}
  \item There is a (semi-)concave function $u:\R^m\to\R$ whose singular set has fractional Hausdorff dimension between $m-2$ and $m-1$ (Example \ref{ex:cantor}).
  \item There is a (semi-)concave function $u:\R^m\to\R$ with a singular point at which a Lipschitz propagation occurs but any graphical propagation does not (Example \ref{ex:zigzag}).
\end{itemize}
The first remark ensures that Theorem \ref{thm:main_singular_set} fails for a general (semi-)concave function, since if the same assertion would hold, then we would necessarily have either $\dim_\mathcal{H}\Sigma(u)=m-1$ or $\dim_\mathcal{H}\Sigma(u)\leq m-2$.
The second remark certifies a significant difference between Lipschitz and (DC) graphical propagations at least for general (semi-)concave functions.

\section{Preliminaries on Finsler geometry}\label{sec:PreliminariesFinsler}

In what follows we argue on a general Finsler manifold $M$.
In this section we review basic definitions, notions and properties in the framework of Finsler geometry, while introducing terminologies that we use in the sequel.

\subsection{Lipschitz submanifold}\label{subsec:Lipsubmfd_Finsler}

To begin with, we give a definition of Lipschitz submanifold in a general smooth manifold (independent of the metric).

\begin{definition}[Lipschitz submanifold: General case]\label{def:Lipsubmfd_Finsler}
  Let $M^m$ be a smooth $m$-dimensional manifold and $d$ an integer with $1\leq d\leq m-1$.
  A subset $S\subset M^m$ is called \emph{(embedded) $d$-dimensional Lipschitz submanifold} if for any $p\in S$ there is a local chart $\varphi:U\to\varphi(U)\subset\R^m$ around $p$ such that $\varphi(S\cap U)$ is a $d$-dimensional Lipschitz submanifold of $\R^m$ in the sense of Definition \ref{def:Lipsubmfd_Euclid}.
\end{definition}

In fact, the above definition does not depend on the choice of a local chart; in particular this definition is compatible with Definition \ref{def:Lipsubmfd_Euclid} provided that $M$ is Euclidean.
Although this fact might be well known for experts, we could not find any explicit argument in the literature so here we verify the following

\begin{lemma}\label{lem:submanifold_coordinate}
  Let $S\subset\R^m$ be a subset.
  Let $p\in S$ and $U\subset\R^m$ be an open neighborhood of $p$ in $\R^m$.
  Suppose that there is an affine subspace $P\subset\R^m$ such that the orthogonal projection $\pi:\R^m\to P$ restricted to $S\cap U$ is bi-Lipschitz.
  Then, for any neighborhood $V\subset\R^m$ of $p$ in $\R^m$ and any diffeomorphism $\Phi:V\to\Phi(V)\subset\R^m$, there are a neighborhood $V'\subset V\cap U$ of $p$ and an affine subspace $P'$ of same dimension as $P$ such that the orthogonal projection $\pi':\R^m\to P'$ restricted to $\Phi(S\cap V')$ is bi-Lipschitz.
\end{lemma}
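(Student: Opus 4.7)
The plan is to transport the bi-Lipschitz hypothesis through $\Phi$ by a careful choice of $P'$, adapted to the linearization $A := D\Phi(p)$. After translating, we may assume $p = \Phi(p) = 0$ and (by replacing $P$ with its parallel translate) that $P$ is a linear subspace; set $A := D\Phi(0)$, which is invertible, and introduce the auxiliary smooth map
\[
F := \pi \circ \Phi^{-1} \colon \Phi(V) \to P, \qquad F \circ \Phi = \pi.
\]
Since $\Phi^{-1}$ is a diffeomorphism (hence bi-Lipschitz on a neighborhood of $0$) and $\pi|_{S \cap U}$ is bi-Lipschitz, the restriction $F|_{\Phi(S \cap V' \cap U)}$ will be bi-Lipschitz for any sufficiently small $V'$, with some constant $L_F$ that stays bounded as $V'$ shrinks around $0$.

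The heart of the argument is the choice of $P'$. The differential $DF(0) = \pi \circ A^{-1}$ has kernel $L := A(P^\perp)$, a subspace of dimension $m-d$, so I would set
\[
P' := L^\perp = A(P^\perp)^\perp,
\]
a $d$-dimensional subspace. Writing $\pi' \colon \R^m \to P'$ for the orthogonal projection, the decisive property of this choice is that, since $(P')^\perp = L = \ker DF(0)$, the linear map $DF(0)$ factors through $\pi'$:
\[
DF(0)\,v = DF(0)\,\pi'(v) \qquad \text{for every } v \in \R^m.
\]

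The bi-Lipschitz estimate for $\pi'|_{\Phi(S\cap V')}$ then follows from a Taylor expansion. Given any $\epsilon > 0$, we shrink $V' \subset V \cap U$ around $0$ so that
\[
\bigl| F(z_1) - F(z_2) - DF(0)(z_1 - z_2) \bigr| \leq \epsilon \, |z_1 - z_2| \qquad \text{for all } z_1, z_2 \in \Phi(V').
\]
The factorization gives $|F(z_1) - F(z_2)| \leq \|DF(0)\| \, |\pi'(z_1) - \pi'(z_2)| + \epsilon |z_1 - z_2|$, and combining this with the lower bound $|F(z_1) - F(z_2)| \geq L_F^{-1}|z_1 - z_2|$ on $\Phi(S \cap V')$ and choosing $\epsilon < L_F^{-1}$ yields
\[
|\pi'(z_1) - \pi'(z_2)| \;\geq\; \frac{L_F^{-1} - \epsilon}{\|DF(0)\|} \, |z_1 - z_2|,
\]
which together with the trivial upper bound $|\pi'(z_1) - \pi'(z_2)| \leq |z_1 - z_2|$ proves the claim.

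The main obstacle lies in this choice of $P'$: the naive guess $P' = A(P)$ fails in general (a linear shear of a V-shaped Lipschitz graph already produces two branches projecting to the same point under orthogonal projection onto $A(P)$). One must instead take $P' = (\ker DF(0))^\perp$, which is the subspace tailored so that $DF(0)$ factors through orthogonal projection and which captures the infinitesimal directions along which $\Phi(S)$ spreads. If, in addition, one needs $\pi'(\Phi(S \cap V'))$ to be relatively open in $P'$, this will follow by Brouwer's invariance of domain applied to the continuous bijection $\pi'|_{\Phi(S\cap V')} \circ \Phi \circ (\pi|_{S\cap V'})^{-1}$ from the relatively open set $\pi(S \cap V') \subset P \cong \R^d$ into $P' \cong \R^d$.
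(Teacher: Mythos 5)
Your proof is correct and follows essentially the same route as the paper: the decisive choice $P' := \bigl(D\Phi(p)(P^\perp)\bigr)^\perp$ is identical, and the estimate hinges on the same first-order Taylor approximation of $\Phi$ combined with the bi-Lipschitz hypothesis on $\pi|_{S\cap U}$. The only cosmetic difference is that the paper estimates $q \mapsto \pi'(\Phi(q))$ from below directly on the domain side (via injectivity of $\pi'\circ d\Phi_p|_P$ and the splitting $\mathrm{id}=\pi+\pi^\perp$), whereas you pass to $F=\pi\circ\Phi^{-1}$ on the image side and exploit the dual fact that $DF(0)$ factors through $\pi'$; these are equivalent formulations of the same linear-algebraic observation.
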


\begin{proof}
  We first note that, since any orthogonal projection is clearly $1$-Lipschitz, it is sufficient to verify the estimate of the form $|\pi'(p_1)-\pi'(p_2)|\geq c|p_1-p_2|$ for any $p_1,p_2\in\Phi(S\cap V')$, where $c\in(0,1)$.

  Fix any $P$ in the assumption.
  Up to a translation we may assume that $P$ is a linear subspace, i.e., $\bo\in P$.
  Hereafter $A^\perp$ denotes the orthogonal complement of $A$.
  Fix any diffeomorphism $\Phi:V\to\Phi(V)\subset\R^m$.
  Let $T$ be a linear isomorphism $T:=d\Phi_p:\R^m\to\R^m$, and
  $$P':=\big( T(P^\perp) \big)^\perp.$$
  Clearly $P$ and $P'$ have the same dimension.
  We now prove that the linear map $\pi'\circ T|_P: P\to P'$ is injective.
  Suppose that $(\pi'\circ T|_P)(v)=\bo$ for some $v\in P$.
  Then $T(v)\in ( T(P^\perp) )^{\perp\perp} = T(P^\perp)$ and hence $v\in P^\perp$.
  Therefore, $v\in P\cap P^\perp$, i.e., $v=\bo$.
  This means that the above linear map is injective, and hence there is $c_0>0$ such that
  \begin{equation}\label{eq:0316-4}
    (\pi'\circ T|_P)(v) \geq c_0|v| \quad \text{for any $v\in P$}.
  \end{equation}
  On the other hand, since $\Phi$ is smooth, for any $\varepsilon>0$ there is $\delta>0$ such that
  $$|\Phi(q_1)-\Phi(q_2)-T(q_1-q_2)|\leq \varepsilon|q_1-q_2|$$
  for $q_1,q_2\in B_\delta(p)\cap V$, and hence
  \begin{align}\label{eq:0316-6}
    \begin{split}
      |\pi'(\Phi(q_1))-\pi'(\Phi(q_2))| &= |\pi'(\Phi(q_1)-\Phi(q_2)-T(q_1-q_2))+\pi'(T(q_1-q_2))|\\
      & \geq |\pi'(T(q_1-q_2))|-\varepsilon|q_1-q_2|.
    \end{split}
  \end{align}
  Let $\pi^\perp$ denote the orthogonal projection to $P^\perp$.
  Then $a=\pi(a)+\pi^\perp(a)$ for any $a\in\R^m$.
  Taking $a=q_1-q_2$ and operating the linear map $T$, we obtain
  $$T(q_1-q_2)=T(\pi(q_1-q_2))+T(\pi^\perp(q_1-q_2)),$$
  and hence by \eqref{eq:0316-4},
  \begin{equation}\label{eq:0316-5}
    |\pi'(T(q_1-q_2))|=|(\pi'\circ T)(\pi(q_1-q_2))|\geq c_0|\pi(q_1)-\pi(q_2)|.
  \end{equation}
  In addition, by the bi-Lipschitz property of $\pi$ there is $c_1>0$ such that
  \begin{equation}\label{eq:0316-7}
    |\pi(q_1)-\pi(q_2)|\geq c_1|q_1-q_2| \quad \text{for any $q_1,q_2\in S\cap U$}.
  \end{equation}
  Combining \eqref{eq:0316-6}, \eqref{eq:0316-5}, and \eqref{eq:0316-7}, and choosing $\varepsilon=c_0c_1/2$,
  we deduce that
  $$|\pi'(\Phi(q_1))-\pi'(\Phi(q_2))| \geq \varepsilon|q_1-q_2| \quad \text{for any $q_1,q_2\in S\cap U\cap B_\delta(p)\cap V$}.$$
  Taking $V':=U\cap B_\delta(p)\cap V$ completes the proof.
\end{proof}

With this lemma at hand we observe that if $S\subset\R^m$ is a Lipschitz submanifold in the sense of Definition \ref{def:Lipsubmfd_Finsler}, then it also satisfies Definition \ref{def:Lipsubmfd_Euclid}.
(The converse obviously follows by choosing $\varphi:=\mathrm{id}$.)
Indeed, for any $p\in S$ and the associated local coordinate $\varphi:U\to\varphi(U)\subset\R^m$ in Definition \ref{def:Lipsubmfd_Finsler}, if we apply Lemma \ref{lem:submanifold_coordinate} to the set $\varphi(S\cap U)$ with $\Phi:=\varphi^{-1}:\varphi(U)\to U$ around $\varphi(p)$,
then we find a desired bi-Lipschitz orthogonal projection of $S$ around $p$ in Definition \ref{def:Lipsubmfd_Euclid}.


\subsection{DC submanifold}

\begin{definition}[DC submanifold: General case]\label{def:DCsubmfd_Manifold}
  Let $M$ be a smooth $m$-dimensional manifold and $d$ an integer with
  $1\leq d\leq m-1.$
  A subset $S$ of  $M$ is called \emph{$d$-dimensional delta-convex (DC) submanifold} of $M$ if for any local chart $(U,\varphi),$ the set $\varphi(S\cap U)$ is a DC submanifold of $\R^m$ in the sense of Definition \ref{def:DCsubmfd_Euclid}. 
\end{definition}

As in the Euclidean space we call the codimension-one case \emph{hypersurface}, and interpret the zero-dimensional case as a point.

By the following lemma, one can say that $\varphi^{-1}(S\cap \varphi(U))$ is a DC submanifold of an $m$-dimensional smooth manifold $M$ for any local chart $(U, \varphi)$ of $M$
if a subset $S\subset \R^m$ is a DC submanifold of $\R^m$ in the sense of Definition \ref{def:DCsubmfd_Euclid}.

\begin{lemma}\label{lem:well_defined_definition_DC}
Let $S$ be a $d$-dimensional \emph{DC} submanifold of $\R^m.$
Then for any open set $V\subset\R^m$ and any diffeomorphism $\Phi: V\to \Phi(V)\subset \R^m,$ the set
$\Phi(S\cap V)$ is also a DC submanifold of $\R^m.$
\end{lemma}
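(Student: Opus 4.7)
The plan is to verify Definition \ref{def:DCsubmfd_Euclid} at each point $\Phi(p)$ of $\Phi(S\cap V)$ by transporting the local DC-graph structure at $p$ through the diffeomorphism $\Phi$. Fix $p\in S\cap V$. Definition \ref{def:DCsubmfd_Euclid} applied at $p$ furnishes an open neighborhood $U\subset V$ of $p$ and a $d$-dimensional affine subspace $P\subset\R^m$ such that the orthogonal projection $\pi$ restricts to a bi-Lipschitz homeomorphism $\pi|_{S\cap U}\colon S\cap U\to\pi(S\cap U)$ onto an open subset of $P$, with DC inverse $\psi:=(\pi|_{S\cap U})^{-1}$. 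Lemma \ref{lem:submanifold_coordinate} then produces an open neighborhood $V'\subset V\cap U$ of $p$ and an affine subspace $P':=(T(P^\perp))^\perp$, where $T:=d\Phi_p$, such that the orthogonal projection $\pi'\colon\R^m\to P'$ is bi-Lipschitz on $\Phi(S\cap V')$.

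I would then introduce the auxiliary map $G:=\pi'\circ\Phi\circ\psi\colon\pi(S\cap V')\to P'$. Since $\psi$ is DC, $\Phi$ is smooth, and $\pi'$ is linear, Lemma \ref{lem:DC_fundamental_properties} ensures that $G$ is DC. The crux of the proof is the computation of the Clarke generalized gradient $\partial G(x_0)$ at $x_0:=\pi(p)$. Invoking a chain rule for Clarke's gradient (applicable because $\Phi$ is $C^1$ at $\psi(x_0)=p$ and $\pi'$ is linear) gives $\partial G(x_0)=\pi'\circ T\circ\partial\psi(x_0)$. Differentiating the identity $\pi\circ\psi=\mathrm{id}_P$ at points of differentiability of $\psi$ and passing to limits in the reachable gradient yields $\pi\circ B=\mathrm{id}_P$ for every $B\in\partial\psi(x_0)$, so each such $B$ decomposes as $B(v)=v+A_B(v)$ with $A_B(v)\in P^\perp$. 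Since $T(A_B(v))\in T(P^\perp)=(P')^\perp$, the projection $\pi'$ annihilates it, whence $\pi'\circ T\circ B=\pi'\circ T|_P$ independently of $B$. Thus $\partial G(x_0)=\{\pi'\circ T|_P\}$ is the singleton of an invertible linear map $P\to P'$, invertibility having been established in the proof of Lemma \ref{lem:submanifold_coordinate}.

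With this singleton-invertibility in hand, I would apply the DC inverse function theorem (obtained from Theorem \ref{thm:DC_IFT} as in Remark \ref{rem:DC_IFT}, by solving $H(x,y):=G(x)-y=0$ for $x$ in terms of $y$) to produce open neighborhoods $\tilde U\subset\pi(S\cap V')$ of $x_0$ and $W'\subset P'$ of $G(x_0)=\pi'(\Phi(p))$ such that $G\colon\tilde U\to W'$ is a bijection with DC inverse. Setting $V'':=\pi^{-1}(\tilde U)\cap V'$ gives an open neighborhood of $p$ in $\R^m$ for which $\Phi(S\cap V'')=\Phi(\psi(\tilde U))$ projects homeomorphically under $\pi'$ onto the open set $W'=\pi'(\Phi(S\cap V''))$, and the inverse of $\pi'|_{\Phi(S\cap V'')}$ is the composition $\Phi\circ\psi\circ G^{-1}\colon W'\to\Phi(S\cap V'')$, which is DC by Lemma \ref{lem:DC_fundamental_properties}. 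This verifies Definition \ref{def:DCsubmfd_Euclid} for $\Phi(S\cap V)$ at $\Phi(p)$; since $p$ was arbitrary, $\Phi(S\cap V)$ is a DC submanifold of $\R^m$.

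The main obstacle will be the generalized-gradient computation: one must carefully invoke a chain rule for Clarke's gradient in the mixed regime where only the outer factor is $C^1$, and then exploit the graphical identity $\pi\circ\psi=\mathrm{id}_P$ to show that every $B\in\partial\psi(x_0)$ differs from the inclusion $P\hookrightarrow\R^m$ only in its $P^\perp$-component, which is precisely what is killed by $\pi'$ by the very choice $P'=(T(P^\perp))^\perp$. Once this rigidity of $\partial\psi(x_0)$ under $\pi'\circ T$ is made rigorous, invoking Vesel\'y--Zaj\'i\v{c}ek's implicit function theorem to invert $G$ is routine, and the remaining bookkeeping with neighborhoods is straightforward.
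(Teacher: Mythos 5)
Your proposal follows essentially the same structural route as the paper: both fix $p\in S\cap V$, use the DC graph representation of $S$ near $p$ and Lemma~\ref{lem:submanifold_coordinate} to obtain a bi-Lipschitz projection $\pi'$ of $\Phi(S\cap V')$ onto $P'=(T(P^\perp))^\perp$, introduce the change-of-base map (your $G=\pi'\circ\Phi\circ\psi$, which is the same as the map $y(x)$ in the paper's proof up to the isometries $T,T'$), invert it via a Vesel\'y--Zaj\'i\v{c}ek theorem, and then read off the DC inverse of $\pi'|_{\Phi(S\cap V'')}$ as a composition. The one real divergence is the inversion step. The paper simply observes that $y(x)$ is bi-Lipschitz (as a composition of bi-Lipschitz maps) and DC and invokes \cite[Theorem~5.2]{Vesely1989}, which says directly that a locally bi-Lipschitz locally DC map has a locally DC inverse; no generalized-gradient computation is needed. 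You instead compute $\partial G(x_0)$ and apply the implicit-function form of Remark~\ref{rem:DC_IFT}. That route works, but the claim
$\partial G(x_0)=\pi'\circ T\circ\partial\psi(x_0)$
is asserted too quickly: since $\pi'\circ T$ is not injective, $G$ may be differentiable at points where $\psi$ is not, so only the inclusion $\supset$ is immediate; the reverse inclusion, which you need for the singleton conclusion, does not follow from the chain rule alone. The fix is short and uses precisely the rigidity you identified: writing $\psi(x)=x+h(x)$ with $h(x)\in P^\perp$ and using that $\Phi$ is $C^1$ with $D\Phi(p)=T$, one gets for $x,y$ near $x_0$ that $G(x)-G(y)=\pi'\big(T(\psi(x)-\psi(y))\big)+o(|x-y|)=\pi'(T(x-y))+o(|x-y|)$, since $\pi'\circ T$ annihilates the $P^\perp$-component $h(x)-h(y)$. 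This shows $G$ is strictly differentiable at $x_0$ with derivative $\pi'\circ T|_P$ (hence $\partial G(x_0)$ is a singleton) without any appeal to a Clarke chain rule. With that patched, your argument is correct; the paper's choice of \cite[Theorem~5.2]{Vesely1989} is simply the more economical tool here, since it sidesteps the generalized-gradient analysis entirely.
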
     
\begin{proof}
   Choose any point $\bar{p}=\Phi(p)$, where $p\in S\cap V,$ in $\Phi(S\cap V).$
   Since $S$ is a $d$-dimensional DC submanifold of $\R^m,$ there exist an open neighborhood $U$ of $p,$ $d$-dimensional affine subspace $P\subset \R^m,$
   and a DC map $\tilde{f} : \pi(S\cap U)\to \R^{m-d},$
   where $\pi$ denotes the orthogonal projection onto $P$ such that, for an isometry $T$ on $\R^m$ satisfying $T(P)=\R^d\times\{\bo \}$,
   \begin{equation*}
   T(S\cap U)=\{(x,\tilde{f}(T^{-1}(x)) \mid x\in T(\pi(S\cap U))\}.
   \end{equation*}
   Here and in the sequel each point $(x,\bo)\in \R^d\times\{\bo\}$ is identified as $x\in \R^d.$
   From Lemma \ref{lem:submanifold_coordinate} it follows that there exist an open neighborhood $V'\subset V\cap U$ of $p,$ an affine subspace $P'$ of $\R^m$ and a Lipschitz map $\tilde{g}:\pi'(\Phi(S\cap V'))\to \R^{m-d},$ where $\pi'$ denotes the orthogonal projection onto $P'$
   such that, for an isometry $T'$ of $\R^m$ satisfying $T'(P')=\R^d\times \{\bo\}$,
   \begin{equation*}
       T'(\Phi(S\cap V'))=\{(y,\tilde{g}(T'^{-1}(y)) \mid y\in T'(\pi'(\Phi(S\cap V'))\}.
   \end{equation*}
   Thus we have 
   \begin{equation}\label{eq:graph_local_S}
   S\cap V'=\{T^{-1}(x,f(x)) \mid x\in T(\pi(S\cap V'))\},
   \end{equation}
   where $f:=\tilde{f}\circ T^{-1},$
   and 
   \begin{equation}\label{eq:graph_local_Phi(S)}
       \Phi(S\cap V')=\{T'^{-1}(y,g(y)) \mid y\in T'(\pi'(\Phi(S\cap V')))\},
   \end{equation}
   where $g:=\tilde{g}\circ T'^{-1}.$
  From \eqref{eq:graph_local_S} and \eqref{eq:graph_local_Phi(S)}
  it follows that
  for each point $x\in T(\pi(S\cap V')),$ there exists a point
  $y(x)\in T'(\pi'(\Phi(S\cap V')))$ satisfying
  \begin{equation}\label{eq:change_coordinate}\nonumber
  \Phi(T^{-1}(x,f(x)) )=T'^{-1}( (y(x),g(y(x)) )).
  \end{equation}
  In particular,
  \begin{equation}\label{eq:change_coordinate1}\nonumber
  y(x)=\pi_1(T'(\Phi( T^{-1}(x,f(x)) ) ) ),
  \end{equation}
  where $\pi_1:\R^m\to \R^d\times\{\bo \}$ denotes the orthogonal projection onto $\R^d\times\{\bo \},$ and
  \begin{equation}\label{eq:change_coordinate2}
  g(y(x))=\pi_2(T'(\Phi(T^{-1}(x,f(x))) ) ),
  \end{equation}
  where $\pi_2:\R^m\to \{\bo \}\times\R^{m-d}$ denotes the orthogonal projection onto $\{\bo \}\times\R^{m-d}.$
  The map $y=y(x)$ is bi-Lipschitz and locally DC on $T(\pi(S\cap V')))$ by Lemma \ref{lem:DC_fundamental_properties} (ii), (iii).
  Therefore, from \cite[Theorem 5.2]{Vesely1989} it follows that the inverse map $x=x(y)$ of $y=y(x)$
  is also DC on $T'(\pi'(\Phi(S\cap V'))).$ Moreover, from \eqref{eq:change_coordinate2} we get 
  \begin{equation*}
      g(y)=\pi_2(T'(\Phi(T^{-1}(x(y),f(x(y)))))).
  \end{equation*}
  Now it is clear from Lemma \ref{lem:DC_fundamental_properties} (ii) that
  $g$ is locally DC. This implies that $\Phi(S\cap V)$ is a DC submanifold of $\R^m.$
\end{proof} 

\subsection{Generalized differential}

Let us introduce here the notion of the {\it generalized differential} of a locally Lipschitz function on a smooth manifold (again independent of the metric).

Let $\tilde f:U\to {\bf R}$ denote a locally Lipschitz function on an open subset $U$ of ${\bf R}^m.$
Define the
{\it reachable differential} $d^*{\tilde f}(x)$
of $\tilde f$ at $x\in U$ by
\begin{equation*}
d^*\tilde f(x):=\Big\{ \lim_{j\to\infty}d\tilde f_{x_j} \ \Big|\ x_j\to x,\ \exists d\tilde f_{x_j} \Big\},
\end{equation*}
where $d\tilde f_{x_j}$ denotes the (classical) differential of $\tilde f$ at $x_j.$
Notice that the generalized gradient $\partial \tilde f(x)$ defined in \eqref{eq:generalized_gradient} is linearly isomorphic to $\conv d^*\tilde f(x).$

Then we define the {\it reachable differential} $d^*f(p)$ at a point $p\in V$ of a locally Lipschitz function $f$ on an open subset $V$ of a smooth manifold $M^m$ by
\begin{equation*}
d^*f(p):=\Big\{ \omega \circ d\varphi_p\ \Big|\ \omega\in d^*(f\circ \varphi^{-1})(\varphi(p))\Big\},
\end{equation*}
where $\varphi$ denotes a local chart around the point $p.$
From the chain rule it is clear that this definition is independent of the choice of a local chart around $p.$
It is also clear that an equivalent definition is:
\begin{equation*}
    d^*f(p):=\Big\{ \lim_{j\to\infty} df_{p_j} \Big|\ p_j\to p,\ \exists df_{p_j} \Big\}.
\end{equation*}
Here $df_{p_j}$ means the classical differential of $f$ at $p_j$ and the limit $df_{p_j}\to df_p$ means $(p_j,df_{p_j})\to(p,df_p)$ in $TM$.

Finally we define the {\it generalized differential} $\partial^* f(p)$ by
\begin{equation*}
\partial^*f(p):=\conv d^*f(p).
\end{equation*}
By definition one has similar properties to the Euclidean case; $\partial^*(f_1+f_2)\subset\partial^*f_1+\partial^*f_2$; if $\partial^*f(p)$ is a singleton, then $f$ is differentiable at $p$.

\subsection{DC implicit function theorem}

We can extend the DC implicit function theorem on the Euclidean space to a general smooth manifold.
To state it we need to define DC functions on a manifold.
Incidentally we also define semi-concavity for later use.

\begin{definition}[Delta-convexity and semi-concavity on a manifold]\label{def:SC_DC_mfd}
  A continuous function $u:\Omega\to\mathbf{R}$ on an open set $\Omega\subset M^m$ is called {\em locally delta-convex (DC)} (resp.\ {\em locally semi-concave}) if for any local chart $\varphi:\Omega\supset U\to\varphi(U)\subset\R^m$ the function $u\circ\varphi^{-1}$ is locally DC (resp.\ locally semi-concave).
\end{definition}

This definition is well-posed thanks to independence on the choice of the local chart, cf.\ \cite[Proposition 2.6]{Mantegazza2003} and Lemma \ref{lem:DC_fundamental_properties} (ii), (iii) (see also \cite{Petrunin2007}, \cite[Section 2.2]{Generau2022} for semi-convex functions on curved spaces).
Of course, any semi-concave function on a manifold is also delta-convex.

Using the definitions prepared above, we obtain the following

\begin{theorem}[DC implicit function theorem on a manifold]\label{thm:DC_IFT_mfd}
  Let $V\subset M^m$ be an open subset of an $m$-dimensional smooth manifold $M^m$ and $f_1,\dots,f_k:V\to\R$ be DC functions, where $1\leq k\leq m-1$, and let $p_0\in V$.
  If $f_1(p_0)=\dots=f_k(p_0)=0$ and 
  if $\partial^* f_1(p_0),\dots,\partial^* f_k(p_0)$ are singletons whose elements are linearly independent,
  then there exists a neighborhood $U\subset V$ of $p_0$ in $M^m$ such that $S:=\bigcap_{j=1}^k f_j^{-1}(0)\cap U$ is an $(m-k)$-dimensional DC submanifold of $M^m$.
\end{theorem}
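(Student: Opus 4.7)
The plan is to reduce the statement to the Euclidean implicit function theorem (Theorem \ref{thm:DC_IFT}) via a local chart around $p_0$, and then pull the resulting DC submanifold back to $M^m$ using the invariance provided by Lemma \ref{lem:well_defined_definition_DC}.

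First I would fix a local chart $\varphi : V \supset W \to \varphi(W) \subset \R^m$ with $p_0 \in W$, set $x_0 := \varphi(p_0)$, and define $\tilde f_j := f_j \circ \varphi^{-1} : \varphi(W) \to \R$ for $j=1,\dots,k$. By Definition \ref{def:SC_DC_mfd} each $\tilde f_j$ is locally DC on $\varphi(W)$, and clearly $\tilde f_j(x_0) = 0$. Next I need to verify the hypotheses of Theorem \ref{thm:DC_IFT} at $x_0$. By the chain-rule-type definition of the reachable differential on a manifold, one has $d^* f_j(p_0) = \{\omega \circ d\varphi_{p_0} \mid \omega \in d^* \tilde f_j(x_0)\}$, and hence, taking convex hulls,
\begin{equation*}
\partial^* f_j(p_0) = \{\omega \circ d\varphi_{p_0} \mid \omega \in \partial \tilde f_j(x_0)\}.
\end{equation*}
Because $d\varphi_{p_0} : T_{p_0} M \to T_{x_0}\R^m \cong \R^m$ is a linear isomorphism, the map $\omega \mapsto \omega \circ d\varphi_{p_0}$ is a linear isomorphism from $(\R^m)^*$ onto $T_{p_0}^* M$. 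Therefore $\partial^* f_j(p_0)$ is a singleton if and only if $\partial \tilde f_j(x_0)$ is, and the resulting singleton elements $\omega_1,\dots,\omega_k \in (\R^m)^*$ are linearly independent if and only if their images $\omega_j \circ d\varphi_{p_0}$ are linearly independent in $T_{p_0}^* M$.

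With the Euclidean hypotheses thus verified at $x_0$, Theorem \ref{thm:DC_IFT} produces an open neighborhood $U' \subset \varphi(W)$ of $x_0$ such that
\begin{equation*}
\tilde S := \bigcap_{j=1}^k \tilde f_j^{-1}(0) \cap U'
\end{equation*}
is an $(m-k)$-dimensional DC submanifold of $\R^m$. Setting $U := \varphi^{-1}(U')$, we have $S = \varphi^{-1}(\tilde S) = \bigcap_{j=1}^k f_j^{-1}(0) \cap U$. To conclude that $S$ is a DC submanifold of $M^m$ in the sense of Definition \ref{def:DCsubmfd_Manifold}, I would argue locally: for any other chart $(U_1,\psi)$ with $U_1 \subset U$, the transition map $\psi \circ \varphi^{-1}$ is a diffeomorphism between open subsets of $\R^m$, so by Lemma \ref{lem:well_defined_definition_DC} the image $\psi(S \cap U_1) = (\psi \circ \varphi^{-1})(\tilde S \cap \varphi(U_1))$ is a DC submanifold of $\R^m$. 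This is exactly what Definition \ref{def:DCsubmfd_Manifold} requires.

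The main obstacle is not a deep one but a bookkeeping one: one must carefully match the chart-dependent definition of the generalized differential $\partial^* f_j(p_0)$ with its Euclidean counterpart $\partial \tilde f_j(x_0)$ so that the ``singleton'' and ``linearly independent'' conditions transfer correctly, and one must invoke Lemma \ref{lem:well_defined_definition_DC} to ensure that the Euclidean DC conclusion is genuinely coordinate-free. Once these translations are in place, the theorem is a direct corollary of Theorem \ref{thm:DC_IFT}.
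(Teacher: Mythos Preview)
Your proposal is correct and matches the paper's approach exactly: the paper states that the proof is reduced to Theorem \ref{thm:DC_IFT} by taking a local chart and omits the details, and you have supplied precisely those details (the transfer of the singleton and linear-independence conditions via $d\varphi_{p_0}$, and the chart-independence via Lemma \ref{lem:well_defined_definition_DC}).
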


The proof is reduced to Theorem \ref{thm:DC_IFT} by taking a local chart so safely omitted.

\subsection{Finsler manifold}

From this subsection we associate a metric to a smooth manifold.

A smooth ($C^\infty$) manifold $M$ is called {\em Finsler manifold} if the manifold admits a nonnegative function $F$
on the tangent bundle $TM$ of $M$ such that $F$ is $C^\infty$ on $TM\setminus\{\mathbf{0}\}$, where $\mathbf{0}$ denotes the zero section, and the restriction $F|_{T_pM}$ of $F$ to each tangent plane $T_pM$ to $M$ at $p\in M$ is a Minkowski norm.
Here, $F$ is called a {\em Minkowski norm} if
\begin{itemize}
  \item[1.] $F(p,y)\geq0$ for all $(p,y)\in T_pM$, and $F(p,y)=0$ if and only if $y=0$;
  \item[2.] $F$ is positively homogeneous of degree $1$, i.e., $F(p,\lambda y)=\lambda F(p,y)$ for all  $(p,y)\in T_pM$ and $\lambda\geq0$;
  \item[3.] the Hessian of $F^2$ is positive definite on $T_pM\setminus\{0\}$, i.e., for any $y\in T_pM\setminus\{0\}$, the following bilinear symmetric function $g_y$ on $T_pM$ is an inner product,
  \begin{equation}\label{eq01}\nonumber
    g_y(u,v):= \frac{\partial^2}{\partial s\partial t}\Big|_{s=t=0} \left( \frac{1}{2}F^2(p,y+su+tv) \right).
  \end{equation}
\end{itemize}
On a Finsler manifold $(M,F)$, one can define the length of curves as follows.
For a $C^1$-curve $\gamma:[a,b]\to M$, the length $L(\gamma)$ of $\gamma$ is defined by
\begin{equation}\label{eq02}
  L(\gamma):=\int_a^b F(\gamma(t),\gamma'(t))dt,
\end{equation}
where $\gamma'(t)$ denotes the velocity vector of $\gamma$.

One can introduce the Finsler distance on a $C^\infty$-Finsler manifold $(M,F)$.
For each pair of points $p,q\in M$, let $\Omega_{pq}$ denote the set of all piecewise $C^1$-curve $\gamma:[a,b]\to M$ such that $\gamma(a)=p$ and $\gamma(b)=q$.
The distance $d(p,q)$ from $p$ to $q$ is defined by
\begin{equation}\label{eq05}\nonumber
  d(p,q):=\inf_{\gamma\in\Omega_{pq}}L(\gamma).
\end{equation}
Note that the length of a piecewise $C^1$-curve is also defined by \eqref{eq02}.
This distance has the following (quasimetric) properties:
\begin{itemize}
  \item[1.] $d(p,q)\geq0$, with equality if and only if $p=q$;
  \item[2.] $d(p,q)\leq d(p,r)+d(r,q)$ holds for any points $p$, $q$, and $r$.
\end{itemize}
Notice carefully that the relation $d$ may not be symmetric due to the anisotropy of the Finsler metric.
To avoid the possible asymmetry, we also use the symmetrized distance function
$$d_{\max}(p,q):=\max\{d(p,q),d(q,p)\}.$$
This $d_{\max}$ defines a distance function in the standard sense (with symmetry).
Note that the topology induced from this distance coincides with the original topology of the manifold $M$.

We recall (cf.\ \cite[p.151]{Bao2000}) that a sequence $\{p_i\}_{i=1}^\infty$ of points on a Finsler manifold $(M,F)$ is called {\em backward} (resp.\ {\em forward}) {\em Cauchy sequence} if for any $\varepsilon>0$ there exists $N(\varepsilon)>0$ such that for all $N(\varepsilon)\leq i\leq j$, $d(p_j,p_i)<\varepsilon$ (resp.\ $d(p_i,p_j)<\varepsilon$).
The Finsler manifold $(M,F)$ is said to be {\em backward} (resp.\ {\em forward}) {\em complete} if every backward (resp.\ forward) Cauchy sequence converges.
From the Finslerian version of the Hopf-Rinow theorem (see \cite[p.168]{Bao2000}), the Finsler manifold $(M,F)$ is backward (resp.\ forward) complete if and only if every closed and backward (resp.\ forward) bounded subset of $(M,F)$ is compact, where a subset $N$ of $(M,F)$ is said to be {\em backward} (resp.\ {\em forward}) {\em bounded} if there exists a point $p\in M$ and a number $K>0$ such that $\sup_{x\in N}d(x,p)<\infty$ (resp.\ $\sup_{x\in N}d(p,x)<\infty$).
If a Finsler manifold is backward and forward complete, the manifold is said to be {\em bi-complete}, or simply \emph{complete} in this paper.

Let $N\subset M$ be a closed subset and $d_N$ be the distance function from $N$ defined in \eqref{eq:distance_def}.
A curve $\gamma:[0,a]\to M$ is called {\em $N$-segment} if $d(N,\gamma(t))=t$ holds for any $t\in[0,a]$.
By this definition with continuity of $d_N$ any curve obtained as a pointwise limit of a sequence of $N$-segments is also an $N$-segment.

Any $N$-segment $\gamma$ is a unit-speed geodesic.
Here a geodesic is defined as a solution to the Euler--Lagrange equation corresponding to the first variation of the length functional, cf.\ \cite[Ch.\ 5]{Bao2000}, and hence any geodesic is a unique smooth solution to a second order ODE for given initial data of up to first order.
Based on this fact, we can prove (in a standard way) that any two $N$-segments do not intersect with each other except at their endpoints.
Also, by using the exponential map
(defined on the whole $TM$ by the Hopf--Rinow theorem \cite[Theorem 6.6.1]{Bao2000}) the curve $\gamma$ can be written of the form $\exp_p(tT)$, where $p=\gamma(0)$ and $T=\gamma'(0)\in S_pM$.
We remark that, in contrast to the Riemannian case, the exponential map is only $C^1$ at the zero section of $TM$ but $C^\infty$ away from it \cite[p.127]{Bao2000}.

\subsection{Basic properties of distance functions}

For later use we exhibit several basic properties of the distance function $d_N:M\setminus N\to(0,\infty)$ from a nonempty closed subset $N$ of a connected complete smooth Finsler manifold $M$.

The singular set $\Sigma(N)$ is already defined in \eqref{eq:def_singular_set} as the set of nondifferentiability points of $d_N$.
This set can also be geometrically characterized even in the Finsler case: A point $p\in M\setminus N$ is not an element of $\Sigma(N)$ if and only if $p$ admits a unique $N$-segment \cite[Theorem 2.3]{Sabau2016}.
This fact provides the following characterization:
$$\Sigma(N)=\big\{ p\in M\setminus N \mid \#\pi_{UN}(p)\geq2 \big\},$$
where $UN\subset TM$ denotes the (formal) unit normal bundle of $N$ defined by
\begin{equation*}
  UN:=\big\{ (\alpha(0),\alpha'(0)) \in TM \mid \text{$\alpha$ is an $N$-segment} \big\},
\end{equation*}
and $\pi_{UN}:M\setminus N \to 2^{UN}$ denotes the projection map defined by
\begin{equation*}
  \pi_{UN}(p):= \big\{(\alpha(0),\alpha'(0))\in UN \mid \text{$\alpha$ is an $N$-segment to $p$} \big\}.
\end{equation*}
The map $\pi_{UN}$ is also set-valued upper semicontinuous; indeed, if $p_j\to p$ in $M\setminus N$ and $(q_j,T_j)\in\pi_{UN}(p_j)$ converges to some $(q,T)\in UN$, then the $N$-segments $\alpha_j(t)=\exp_{q_j}(tT_j)$ converge to $\alpha(t):=\exp_{q}(tT)$, which also defines an $N$-segment to $p$ so that $(q,T)\in\pi_{UN}(p)$.
Notice the difference from the Euclidean case that the image of the projection consists of not only points in $M$ but also tangent directions.
This is because in the Finslerian (or even Riemannian) case, common endpoints may admit non-unique $N$-segments (but in this case the tangent vectors need to be different at the initial endpoint).
Finally, let $\pi_N:M\setminus N \to 2^N$ be the composition of $\pi_{UN}$ and the canonical projection from $UN$ onto $N$, namely
\begin{equation*}
  \pi_N(p):= \{\alpha(0)\in N \mid \text{$\alpha$ is an $N$-segment to $p$}\}.
\end{equation*}
Clearly, this map is also set-valued upper semicontinuous.

We also have a similar characterization of $d^*d_N$ as in the Euclidean case.
\begin{lemma}
    The reachable differential $d^*d_N$ of $d_N$ at $p\in M\setminus N$ is given by
    \begin{equation}\label{eq:distance_differential}
         d^*d_N(p)=\left \{g_v(v,\cdot)\ \big|\ v=\alpha'(d_N(p)),\ \text{$\alpha$ is an $N$-segment to  $p$} \right\}.
    \end{equation}
\end{lemma}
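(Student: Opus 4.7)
The plan is to prove the two inclusions separately, with the preliminary step of identifying the classical differential at any differentiability point.

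First I would establish the building block: at any differentiability point $q$ of $d_N$ (which by the cited characterization has a unique $N$-segment $\alpha$), the differential is $dd_N(q)=g_v(v,\cdot)$ with $v:=\alpha'(d_N(q))$. Using the first-variation formula for the point distance $d_{\alpha(0)}$, which is smooth at $q$ along its unique minimizer and has differential $g_v(v,\cdot)$ there, the inequality $d_N \le d_{\alpha(0)}$ with equality at $q$ gives, along any smooth curve $c$ with $c(0)=q$,
\begin{equation*}
 d_N(c(t))-d_N(q)\le t\,g_v(v,c'(0))+\bo(t).
\end{equation*}
Since $d_N$ is differentiable at $q$, applying the bound to both $c'(0)$ and $-c'(0)$ pins down $dd_N(q)=g_v(v,\cdot)$.

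For the inclusion $\supseteq$: given any $N$-segment $\alpha$ to $p$ with $v=\alpha'(d_N(p))$, I would slide in along $\alpha$ and consider $q_s:=\alpha(s)$ for $s\nearrow d_N(p)$. Each sub-arc $\alpha|_{[0,s]}$ is clearly an $N$-segment to $q_s$; uniqueness is the delicate point. If $\gamma$ were a second $N$-segment to $q_s$, then $\gamma\cdot\alpha|_{[s,d_N(p)]}$ would be a curve of length $d_N(p)$ from $\gamma(0)\in N$ to $p$, hence a length-minimizer from $\gamma(0)$ to $p$, hence a smooth Finsler geodesic throughout $[0,d_N(p)]$; smoothness at the junction $\alpha(s)$ forces $\gamma'(s)=\alpha'(s)$, and ODE uniqueness for the geodesic equation yields $\gamma=\alpha|_{[0,s]}$. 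Thus each $q_s$ is a differentiability point, and by the preliminary step $dd_N(q_s)=g_{\alpha'(s)}(\alpha'(s),\cdot)$. Continuity of $(y,w)\mapsto g_w(w,\cdot)$ then gives $dd_N(q_s)\to g_v(v,\cdot)$, so $g_v(v,\cdot)\in d^*d_N(p)$.

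For the inclusion $\subseteq$: take $\xi\in d^*d_N(p)$, so $\xi=\lim_j dd_N(p_j)$ for differentiability points $p_j\to p$. Writing $dd_N(p_j)=g_{v_j}(v_j,\cdot)$ with $v_j\in T_{p_j}M$ the terminal velocity of the unique $N$-segment $\alpha_j$ to $p_j$ (unit speed, so $F(p_j,v_j)=1$), local compactness of the indicatrix bundle near $p$ lets me extract a subsequence with $v_j\to v\in T_pM$ and $F(p,v)=1$. The geodesics $\alpha_j$, obtained by running the geodesic flow from $(p_j,v_j)$ backward over times $d_N(p_j)\to d_N(p)$, converge in $C^1$ to a geodesic $\alpha$ with $\alpha(d_N(p))=p$ and $\alpha'(d_N(p))=v$; closedness of $N$ and the identity $d_N(\alpha_j(t))=t$ pass to the limit to show $\alpha$ is an $N$-segment to $p$. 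By continuity of $g$, $\xi=g_v(v,\cdot)$ lies in the claimed set, as required. (This is essentially the set-valued upper semicontinuity of $\pi_{UN}$ repackaged.)

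The main obstacle is the uniqueness argument in the $\supseteq$ direction: it crucially uses that length-minimizing curves between fixed endpoints in a Finsler manifold are smooth geodesics, combined with Finslerian ODE uniqueness to rule out corners at the concatenation point; once this is granted, the rest is continuity of $g$ and of the geodesic flow.
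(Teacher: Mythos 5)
Your proof is correct and follows essentially the same two-sided inclusion strategy as the paper: slide in along the $N$-segment for $\supseteq$, extract a convergent limit geodesic for $\subseteq$. The only substantive difference is one of self-containedness: where the paper delegates to \cite[Theorem~2.3]{Sabau2016} both the identity $dd_N(q)=g_v(v,\cdot)$ at differentiability points and the fact that interior points $\alpha(s)$, $s<d_N(p)$, of an $N$-segment admit a unique $N$-segment (hence are differentiability points), you reprove these from scratch — the former via the comparison $d_N\le d_{\alpha(0)}$ with equality at $q$, the latter via the concatenation-and-no-corner argument for minimizing geodesics. Both pieces are sound; the only nitpick is that $d_{\alpha(0)}$ need not be literally \emph{smooth} at $q$ (there could be a conjugate point there), but it is differentiable there since uniqueness of the $N$-segment to $q$ rules out a second minimizer from $\alpha(0)$, and differentiability is all your squeeze argument needs. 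For $\subseteq$, you take compactness at the terminal data $(p_j,v_j)$ and run the geodesic flow backward, whereas the paper takes it at the initial data $(q_j,T_j)$ and runs the exponential forward; these are equivalent under the geodesic flow and both rely on completeness. In short: same skeleton, more of the scaffolding made explicit.
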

\begin{proof}
    We confirm that the l.h.s.\ is contained in the r.h.s.
    For any element $\omega\in d^*d_N(p)$ there is a sequence $p_j\to p$ in $M\setminus N$ such that $\omega=\lim_{j\to\infty}dd_N(p_j)$.
    By \cite[Theorem 2.3]{Sabau2016} there is a unique $N$-segment $\alpha_j(t)=\exp_{q_j}(tT_j)$ to $p_j$ and $dd_N(p_j)=g_{v_j}(v_j,\cdot)$, where $v_j:=\alpha_j'(d_N(p_j))$.
    By compactness we can take a subsequence of $\{(q_j,T_j)\}$ (without relabeling) so that $(q,T):=\lim_{j\to\infty}(q_j,T_j)$ exists.
    Then the curve $\alpha(t):=\exp_p(tT)$ is an $N$-segment to $p$, and we have 
    $$v:=\alpha'(d_N(p))=\lim_{j\to\infty}\alpha_j'(d_N(p_j))=\lim_{j\to\infty}v_j$$
    since the exponential map is smooth in this limit (or, unique solutions to the geodesic equation smoothly depend on initial data). 
    Hence $\omega=\lim_{j\to\infty}g_{v_j}(v_j,\cdot)=g_v(v,\cdot)$.
    The converse inclusion is easier to check since for any $N$-segment $\alpha$ to $p$, by \cite[Theorem 2.3]{Sabau2016} we can differentiate $d_N$ at $\alpha(d_N(p)-1/j)$ with differential $g_{v_j}(v_j,\cdot)$, where $v_j:=\alpha'(d_N(p)-1/j)$ which converges to $\alpha'(d_N(p))$ as $j\to\infty$.
\end{proof}

In particular, $d_N$ is differentiable at $p$ if and only if $d^*d_N(p)$ or $\partial^*d_N(p)$ is a singleton.

\section{Generic DC structure: Finslerian case}\label{sec:StructureFinslerian}

Here we complete the proof of Theorem \ref{thm:main_singular_set}.
All the key ideas and techniques are already given in Section \ref{sec:StructureEuclidean}.
This section is thus mainly devoted to explaining how to extend them to the general Finslerian case.

The set $\Sigma_2(N)$ is defined similarly to \eqref{eq:Sigma_2}, namely
\begin{equation*}
  \Sigma_2(N):=\{p\in\Sigma(N) \mid \#\pi_{UN}(p)=2 \}.
\end{equation*}
Then our precise goal is as follows:

\begin{theorem}[DC hypersurface structure: Finslerian case]\label{thm:Finslerian_singular_set}
  Let $m\geq2$, $M^m$ be an $m$-dimensional smooth complete connected Finsler manifold, and $N\subset M^m$ be a nonempty closed subset.
  Then there exist at most countably many DC hypersurfaces $S_j\subset M$ and $(m-2)$-dimensional DC submanifolds $S'_j\subset M$ such that
  \begin{itemize}
    \item[(i)] $\Sigma_2(N) \subset S \subset \Sigma(N)$, where $S:=\bigcup_{j=1}^\infty S_j$,
    \item[(ii)] $\Sigma(N)\setminus S\subset\bigcup_{j=1}^\infty S'_j$.
  \end{itemize}
\end{theorem}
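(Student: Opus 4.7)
The plan is to adapt the two-step proof of Theorem \ref{thm:Euclidean_singular_set} to the Finsler setting. In Step 1, the heart of the matter is a Finslerian analog of the propagation lemma (Lemma \ref{lem:biLipschitzpropagate}): for every $p\in\Sigma_2(N)$, produce a number $\delta(p)>0$ and a DC hypersurface $S_p\subset\Sigma(N)$ through $p$ satisfying a quantitative cleaving estimate on $B_{\delta(p)}(p)\setminus S_p$. Writing $\pi_{UN}(p)=\{(q_1,T_1),(q_2,T_2)\}$, the idea is to split \emph{in $UN$} rather than in $N$: choose small disjoint open neighborhoods $V_1,V_2\subset UN$ of $(q_1,T_1),(q_2,T_2)$ with disjoint compact closures, and from them construct two disjoint closed sets $N_1,N_2\subset M$ with $d_N=\min(d_{N_1},d_{N_2})$ on a neighborhood of $p$. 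When $q_1\neq q_2$ one may take $N_j:=N\cap\overline{B}_\varepsilon(q_j)$ exactly as in the Euclidean case; when $q_1=q_2$ but $T_1\neq T_2$ the construction must instead exploit the fact that the geodesics $t\mapsto\exp_{q_j}(tT_j)$ diverge for small $t>0$, so that $N$-segments to points $x$ near $p$ approach $p$ along one of two distinguishable initial directions and can be sorted accordingly. Each $d_{N_j}$ is then locally semi-concave, hence DC by Lemma \ref{lem:DC_fundamental_properties} (i), and differentiable at $p$ with differential $g_{v_j}(v_j,\cdot)$ via \eqref{eq:distance_differential}, where $v_j$ is the velocity of the $N_j$-segment at $p$. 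Since $v_1\neq v_2$ in $T_pM$ (otherwise reverse-time uniqueness of the geodesic flow would force $(q_1,T_1)=(q_2,T_2)$), strict convexity of the Minkowski norm $F$ yields $g_{v_1}(v_1,\cdot)\neq g_{v_2}(v_2,\cdot)$, so $\partial^*(d_{N_1}-d_{N_2})(p)$ is a singleton whose element is a nonzero covector. Theorem \ref{thm:DC_IFT_mfd} with $k=1$ then produces the desired DC hypersurface $S_p\subset\Sigma(N)$.

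With the propagation lemma in hand, the covering argument in the proof of Theorem \ref{thm:Euclidean_singular_set} transfers with only cosmetic changes: exhaust $M$ by a countable sequence of compact subsets (using bi-completeness and the Finslerian Hopf--Rinow theorem), define a positive radius-type function on $\Sigma_2(N)$ measuring separation of the two elements of $\pi_{UN}(p)$, stratify $\Sigma_2(N)\cap K$ for each compact $K$ according to dyadic radius bounds and lower bounds on $\delta(p)$, and conclude via the cleaving estimate that each stratum is covered by finitely many DC hypersurfaces contained in $\Sigma(N)$. This gives the countable cover establishing (i). For (ii), any $p\in\Sigma(N)\setminus\Sigma_2(N)$ admits at least three elements of $\pi_{UN}(p)$ and hence three distinct unit velocity vectors $v_1,v_2,v_3\in T_pM$ at $p$; since the Legendre transform $v\mapsto g_v(v,\cdot)$ sends the unit indicatrix of $F$ to the strictly convex unit indicatrix of the dual Minkowski norm $F^*$, no three of the covectors $g_{v_i}(v_i,\cdot)$ are collinear, so $\dim\partial^*d_N(p)\geq 2$. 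Working in any local chart $\varphi\colon U\to\varphi(U)\subset\R^m$, the function $d_N\circ\varphi^{-1}$ is locally semi-concave (Definition \ref{def:SC_DC_mfd}), and $\varphi\bigl((\Sigma(N)\setminus\Sigma_2(N))\cap U\bigr)$ lies in the $k=2$ singular set of $d_N\circ\varphi^{-1}$, which by Zaj\'{\i}\v{c}ek's theorem \cite[Theorem 8]{Zajicek1983} is covered by countably many $(m-2)$-dimensional DC submanifolds of $\R^m$; pulling back via $\varphi^{-1}$ preserves this structure (Lemma \ref{lem:well_defined_definition_DC}), and covering $M$ by countably many such charts concludes (ii).

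The main obstacle will be the construction of the disjoint closed sets $N_1,N_2$ in the propagation lemma in the Finsler-specific ``conjugate'' case $q_1=q_2$: one must manufacture sets whose distance functions locally coincide with $d_N$ as a minimum while remaining differentiable at $p$ with the correct Legendre differentials. This step has no direct counterpart in the Euclidean proof and relies essentially on strict convexity of $F$, smoothness of the exponential map away from the zero section, and upper semicontinuity of $\pi_{UN}$. A secondary, more bookkeeping-type issue is ensuring that the DC submanifold structure obtained chart-by-chart in Step 2 is intrinsic; this is exactly what Definition \ref{def:DCsubmfd_Manifold} and Lemma \ref{lem:well_defined_definition_DC} are designed to guarantee.
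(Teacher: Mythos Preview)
Your overall architecture matches the paper's: a Finslerian propagation lemma feeding the same hierarchical covering argument as in Step 1 of Theorem \ref{thm:Euclidean_singular_set}, followed by Zaj\'{\i}\v{c}ek's $(m-2)$-dimensional bound applied chart-by-chart for Step 2. Step 2 is essentially identical to the paper's argument (the non-collinearity of the three Legendre covectors is proved there exactly as you indicate).

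The one substantive divergence is your treatment of the ``conjugate'' case $q_1=q_2$, $T_1\neq T_2$ in the propagation lemma. You propose to split in $UN$, build $N_1,N_2$ by sorting nearby $N$-segments by initial direction, and run the covering argument with a $UN$-based separation radius. You correctly flag this as the main obstacle and leave the construction as a sketch. The paper sidesteps this difficulty entirely with a much simpler device (Lemma \ref{lem:closedneighborhood}): for each compact $K\subset M\setminus N$, replace $N$ by the closed tube $N_c:=\{d_N\leq c\}$ for some $0<c<\min_K d_N$. Then $d_{N_c}=d_N-c$ on $K$, so $\Sigma_2(N)\cap K=\Sigma_2(N_c)\cap K$, and now the two $N_c$-segments to $p$ terminate at \emph{distinct} points of $\partial N_c$ (since the geodesics $\exp_q(tT_1)$ and $\exp_q(tT_2)$ have separated by time $c$), forcing $\rad_{N_c}(p)>0$. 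After this substitution the propagation lemma (Lemma \ref{lem:biLipschitzpropagate_Finslerian}) is stated and proved only under the extra hypothesis $\rad_N(p)>0$, and the argument is then a verbatim transcription of Lemma \ref{lem:biLipschitzpropagate} with $|\cdot|$ replaced by $d_{\max}$. Your direct $UN$-splitting could presumably be made to work, but it requires you to manufacture closed sets $N_j\subset M$ (not subsets of $N$) whose distance functions are both semi-concave and realize $d_N$ as a minimum near $p$; the tube trick delivers all of this for free and keeps the radius function and cleaving estimate literally identical to the Euclidean ones.
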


For each $p\in\Sigma(N)$ we define the radius function analogously to \eqref{eq:rad}:
\begin{equation}\label{eq:rad_mfd}
  \rad_N(p):=\max\{ d_{\max}(q_1,q_2) \mid q_1,q_2\in\pi_N(p) \}.
\end{equation}
Note that, in contrast to the Euclidean case, the radius may take zero since a single point of $N$ may admit multiple $N$-segments to $p$.
This issue is however not essential by considering a closed tubular neighborhood of $N$ as in the following lemma, the proof of which is safely omitted.

\begin{lemma}\label{lem:closedneighborhood}
  Let $K\subset M\setminus N$ be a compact subset.
  Let $c\in(0,\min_{x\in K}d_N(x))$ and $N_c:=\{x\in M \mid d_N(x)\leq c \}$.
  Then $d_{N_c}(p)+c=d_N(p)$ holds for any $p\in K$.
  In particular, $\Sigma_2(N)\cap K=\Sigma_2(N_c)\cap K$ and $\rad_{N_c}(p)>0$ for $p\in K$.
\end{lemma}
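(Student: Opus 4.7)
The plan is to establish (1) by a standard triangle-inequality argument together with the existence of an $N$-segment to each $p\in K$, then to use (1) to set up a bijective correspondence between $\pi_{UN}(p)$ and $\pi_{UN_c}(p)$ via truncation/extension of geodesics at the tube $\partial N_c$, and finally to invoke the regularity of length-minimizing unit-speed curves in Finsler geometry to rule out two distinct $N_c$-segments sharing a starting point, yielding (3).

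For (1), fix $p\in K$ so that $d_N(p)>c$. The inequality $d_{N_c}(p)\geq d_N(p)-c$ follows from $d_N(p)\leq d(y,p)+d_N(y)\leq d(y,p)+c$ for every $y\in N_c$. For the reverse, by the Hopf--Rinow theorem on a complete Finsler manifold there exist $q\in \pi_N(p)$ and an $N$-segment $\alpha\colon[0,d_N(p)]\to M$ from $q$ to $p$; since $d_N(\alpha(c))=c$ we have $\alpha(c)\in N_c$, and $d(\alpha(c),p)=d_N(p)-c$ gives $d_{N_c}(p)\leq d_N(p)-c$.

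For (2), I would define $\Phi\colon\pi_{UN}(p)\to\pi_{UN_c}(p)$ by $(q,T)\mapsto(\alpha(c),\alpha'(c))$ with $\alpha(t):=\exp_q(tT)$; well-definedness amounts to the computation from the previous paragraph carried along $\alpha$. Injectivity is the uniqueness of a geodesic from its position and velocity at $t=c$. For surjectivity, take $(q',T')\in\pi_{UN_c}(p)$ with associated $N_c$-segment $\beta$. First observe $q'\in\partial N_c$ (otherwise a neighborhood of $q'$ in $N_c$ would contradict $d_{N_c}(\beta(s))=s$ for small $s>0$), so $d_N(q')=c$; pick any $N$-segment $\gamma$ from some $q\in N$ to $q'$ and consider the concatenation $\gamma*\beta$. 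Its length is $c+d_{N_c}(p)=d_N(p)=d(q,p)$, so $\gamma*\beta$ is a length-minimizing arc-length curve from $N$ to $p$, hence a single smooth geodesic; this forces $\gamma'(c)=\beta'(0)=T'$, and $(q,\gamma'(0))\in\pi_{UN}(p)$ is the required preimage. Bijectivity of $\Phi$ immediately yields (2).

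For (3), the same smoothness argument provides the key step: if $(q',T_1'),(q',T_2')\in\pi_{UN_c}(p)$ shared a first coordinate but $T_1'\neq T_2'$, then any $N$-segment $\gamma$ from $\pi_N(q')$ to $q'$ would concatenate smoothly with both $\beta_1$ and $\beta_2$, forcing $\gamma'(c)=T_1'=T_2'$, a contradiction. Hence distinct elements of $\pi_{UN_c}(p)$ project to distinct points of $N_c$, so for $p\in \Sigma(N_c)\cap K=\Sigma(N)\cap K$ one gets $\#\pi_{N_c}(p)\geq 2$ and therefore $\rad_{N_c}(p)>0$. The main delicate point I anticipate is the tangent-matching at $q'$: one must use that a length-minimizing unit-speed curve in a Finsler manifold is automatically $C^\infty$ throughout, not merely piecewise, which is what forces $\gamma'(c)=\beta'(0)$ at the joining interior point and thereby closes both the surjectivity of $\Phi$ and the proof of (3).
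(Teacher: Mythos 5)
The paper explicitly omits its own proof of this lemma (``the proof of which is safely omitted''), so there is no argument to compare against; your proof is correct and is the natural argument one would supply. Part (1) is the standard triangle-inequality/$N$-segment argument, and the bijection $\Phi$ in parts (2)--(3) correctly reduces to the two key Finsler facts the paper already uses elsewhere: that a length-minimizing unit-speed curve is a smooth geodesic (so the concatenation $\gamma*\beta$ has no corner at $q'$) and that a geodesic is uniquely determined by its position and velocity at any interior parameter value.
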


Now we confirm that $d_N$ is locally DC, or more strongly, locally semi-concave in the sense of Definition \ref{def:SC_DC_mfd}.
This fact is proven in \cite[Proposition 3.4]{Mantegazza2003} in the Riemannian case through viscosity solution theory.
Their proof is based on \cite[Theorem 5.3]{Lions1982}, which might extend to some Finsler setting.
Here instead, we give a different and direct proof of the general Finsler case.

\begin{lemma}\label{lem:semiconvexity}
  The function $d_N$ is locally semi-concave (and hence DC) on $M\setminus N$.
\end{lemma}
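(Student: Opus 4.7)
The plan is to prove local semi-concavity by exhibiting, for each point $y$ in a small neighborhood of a fixed $p_0 \in M\setminus N$, a smooth function $\varphi_y$ that touches $d_N$ from above at $y$ with $C^2$ bounds uniform in $y$ (in a fixed coordinate chart). The existence of such a family of uniform paraboloid upper supports is the standard sufficient condition for local semi-concavity, from which local delta-convexity follows and lifts to the manifold by Definition~\ref{def:SC_DC_mfd}. For the setup, fix $p_0\in M\setminus N$ with $R:=d_N(p_0)>0$, and use continuity of $d_N$ together with closedness of $N$ to select $\rho\in(0,R/4)$ with $d_N\geq R/2$ on $\overline{B}_{2\rho}(p_0)$; the triangle inequality then gives $d_N=d_K$ there for the compact set $K:=\{q\in N\mid d_{\max}(p_0,q)\leq 3R\}$.

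To each $y\in B_\rho(p_0)$ associate some $q_y\in\pi_N(y)\subset K$, an $N$-segment $\alpha_y\colon[0,d_N(y)]\to M$ from $q_y$ to $y$, and a small parameter $s>0$ to be fixed uniformly, and define
\[
\varphi_y(z):=s+d(\tilde q_y,z),\qquad \tilde q_y:=\alpha_y(s).
\]
The triangle inequality through $q_y$ yields $\varphi_y\geq d_N$ on $\overline{B}_{2\rho}(p_0)$, while $\alpha_y|_{[s,d_N(y)]}$ realizes $d(\tilde q_y,y)=d_N(y)-s$, so $\varphi_y(y)=d_N(y)$.

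The main obstacle is to choose $s>0$ uniformly in $y$ so that each $\varphi_y$ admits uniform $C^2$ bounds on a uniform ball $B_r(y)$. Pulling back from $q_y$ to $\tilde q_y$ first rules out additional minimizers from $\tilde q_y$ to $y$: any alternative minimizer concatenated with $\alpha_y|_{[0,s]}$ would form a broken path of length $d_N(y)$ from $q_y$ to $y$, contradicting that minimizers are smooth solutions of the geodesic ODE. A compactness argument over the admissible initial data in $UN$ — relatively compact because $q_y\in K$ and the length $d_N(y)$ sits in a bounded interval — then places $y$ strictly before the first conjugate point of $\tilde q_y$ along $\alpha_y$ for $s$ uniformly small, by upper semi-continuity of the first conjugate value in the initial data. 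Hence $y$ lies in the open set $M\setminus(\{\tilde q_y\}\cup\mathrm{Cut}(\tilde q_y))$ uniformly, so $\exp_{\tilde q_y}$ is a local diffeomorphism at $(d_N(y)-s)\alpha_y'(s)\in T_{\tilde q_y}M$ with quantitative $C^\infty$ control inherited from the smooth structure of the Finsler exponential away from the zero section. This furnishes the required uniform $C^2$ bounds on $\varphi_y$.

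Having uniform paraboloid upper supports of $d_N$ at every point of a coordinate neighborhood of $p_0$, the standard characterization of semi-concavity (subtracting a large enough quadratic makes the difference concave) gives local semi-concavity in the chart. Since $p_0\in M\setminus N$ was arbitrary, $d_N$ is locally semi-concave on $M\setminus N$, and hence also locally DC.
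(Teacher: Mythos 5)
Your argument is a genuinely different route from the paper's: both produce a smooth upper support for $d_N$ at each point, but they pull back along the $N$-segment in opposite directions. The paper's proof moves $\alpha_\lambda$ forward only until it first exits a small coordinate ball, landing at a point $y_\lambda$ at coordinate distance exactly $\tfrac{3}{2}\delta_0$ from the chart center; the smoothness and uniform $C^2$ bound of the upper support $u(y_\lambda)+d(\varphi^{-1}(y_\lambda),\varphi^{-1}(\cdot))$ then come entirely from the small-scale smoothness of the Finsler distance on a totally normal product neighborhood (invoked via \cite[Theorem 3.1]{Shiohama2019}), with no cut-locus or conjugate-point analysis at all. You instead step back by a small fixed time $s$ from the foot $q_y$, so the auxiliary source $\tilde q_y$ sits at distance roughly $d_N(y)-s$ from $y$, and smoothness of $s+d(\tilde q_y,\cdot)$ near $y$ becomes a global statement that $y$ lies uniformly off the cut locus of $\tilde q_y$; that forces the cut/conjugate analysis and the compactness argument you sketch. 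Your route is the more classical one for distance functions and it does work, but it is globally heavier than the paper's purely local argument, and its compactness step must also quietly rely on the lower semi-continuity of the cut time together with the compactness of the admissible data in $UN$, which is left rather implicit.

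There is one step that is not correctly justified as written. The non-conjugacy of $y$ to $\tilde q_y=\alpha_y(s)$ along $\alpha_y$ cannot be obtained from ``a compactness argument \dots\ by upper semi-continuity of the first conjugate value'': compactness only upgrades a pointwise strict inequality to a uniform one, and semi-continuity of the conjugate value by itself gives no pointwise lower bound. In particular, $y$ may be exactly the first conjugate point of $q_y$ along $\alpha_y$ (minimality of $\alpha_y$ on $[0,d_N(y)]$ does not preclude equality at the terminal endpoint), in which case continuity of the conjugate value yields nothing after stepping forward to $\alpha_y(s)$. What is actually needed is the Klingenberg-type index-form fact: if $\alpha:[0,L]\to M$ is a minimizing geodesic and $0<s<L$, then $\alpha(L)$ is \emph{not} conjugate to $\alpha(s)$ along $\alpha$; one extends a putative Jacobi field on $[s,L]$ vanishing at $s$ and $L$ by zero on $[0,s]$, obtaining a null vector of a positive-semidefinite index form, hence a smooth Jacobi field, hence zero. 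You should prove or cite this pointwise statement (and its Finsler version); once it is in place, the compactness over the admissible initial data does produce the uniform margin, and the rest of your proof, including the uniform $C^2$ bound on $\varphi_y$ via the smooth Finsler exponential away from the zero section and away from the cut locus, goes through.
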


\begin{proof}
  Let $\varphi:M\setminus N\supset U\to\varphi(U)\subset\R^m$ be any local chart.
  We prove that
  $$u:=d_N\circ\varphi^{-1}$$
  is locally semi-concave.
  By locality it is sufficient to prove that for any $z\in \varphi(U)$ there is an open ball $B_\delta(z)$ centered at $z$ such that $\overline{B_\delta(z)}\subset \varphi(U)$ and $d_N\circ\varphi^{-1}$ is semi-concave on $B_\delta(z)$.
  Fix any $z_0\in \varphi(U)$ and take a small radius $\delta_0>0$ so that $B_{2\delta_0}(z_0)\subset \varphi(U)$ and the function $d(\varphi^{-1}(\cdot),\varphi^{-1}(\cdot)):\varphi(U)\times \varphi(U)\to[0,\infty)$ is smooth on $B_{2\delta_0}(z_0)\times B_{2\delta_0}(z_0)\setminus\Delta$, where $\Delta$ denotes the diagonal set.
  (Such a small radius exists thanks to \cite[Theorem 3.1]{Shiohama2019}.)
  Fix any $x_0,x_1\in B_{\delta_0}(z_0)$ and $\lambda\in[0,1]$, and let $x_\lambda:=\lambda x_0+(1-\lambda)x_1$.
  It now suffices to verify that there is $C>0$ depending only on $\varphi$, $z_0$, $\delta_0$ such that
  \begin{align}\label{eq:0316-1}
    \lambda u(x_0) + (1-\lambda) u(x_1) - u(x_\lambda) \leq C \lambda(1-\lambda)|x_0-x_1|^2.
  \end{align}
  Let $p_i:=\varphi^{-1}(x_i)$ for $i=0,\lambda,1$.
  Let $\alpha_\lambda:[0,d_N(p_\lambda)]\to M$ be an $N$-segment to $p_\lambda$, and choose $t_0\in[0,d_N(p_\lambda)]$ so that $\alpha_\lambda(t_0)\in U$ and that the point $y_\lambda:=\varphi(\alpha_\lambda(t_0))$ satisfies that $|z_0-y_\lambda|=\frac{3}{2}\delta_0$.
  By minimality of $\alpha_\lambda$,
  \begin{align*}
    u(x_\lambda)=d_N(p_\lambda) = d_N(\alpha_\lambda(t_0)) + d(\alpha_\lambda(t_0),p_\lambda) = u(y_\lambda) + d(\varphi^{-1}(y_\lambda),\varphi^{-1}(x_\lambda)),
  \end{align*}
  while for $i=0,1$, by the triangle inequality,
  \begin{align*}
    u(x_i)=d_N(p_i) \leq d_N(\alpha_\lambda(t_0)) + d(\alpha_\lambda(t_0),p_i) = u(y_\lambda) + d(\varphi^{-1}(y_\lambda),\varphi^{-1}(x_i)).
  \end{align*}
  Using these relations, and letting
  $v:=d(\varphi^{-1}(y_\lambda),\varphi^{-1}(\cdot)),$
  we deduce that
  \begin{align}\label{eq:0316-2}
    \lambda u(x_0) + (1-\lambda) u(x_1) - u(x_\lambda) \leq \lambda v(x_0) + (1-\lambda) v(x_1) - v(x_\lambda).
  \end{align}
  On the other hand, by definition of $\delta_0$ and $|z_0-y_\lambda|=\frac{3}{2}\delta_0$, the function $v$ is smooth on $B_{\frac{3}{2}\delta_0}(z_0)$, and hence $|D^2v|\leq C$ holds on $B_{\delta_0}(z_0)$ for a constant $C>0$ depending only on ($M$ and) $\varphi,z_0,\delta_0$.
  Therefore,
  \begin{align}\label{eq:0316-3}
    \lambda v(x_0) + (1-\lambda) v(x_1) - v(x_\lambda) \leq C\lambda(1-\lambda)|x_0-x_1|^2.
  \end{align}
  Estimates \eqref{eq:0316-2} and \eqref{eq:0316-3} imply \eqref{eq:0316-1}.
\end{proof}

We then obtain the main propagation lemma, a Finslerian version of Lemma \ref{lem:biLipschitzpropagate}.
Although the statement and the proof are similar except for terminological differences, we give a somewhat precise argument for clarity.

\begin{lemma}\label{lem:biLipschitzpropagate_Finslerian}
  Let $p\in \Sigma_2(N)$ such that $\rad_N(p)>0$.
  Then there exist a positive number $\delta(p)>0$ and a DC hypersurface $S_p\subset M$ such that $S_p\subset\Sigma(N)$ and such that $\rad_N(y)\leq \frac{1}{2}\rad_N(p)$ holds for any $y\in (\Sigma(N)\cap B_{\delta(p)}(p))\setminus S_p$, where $B_r(p):=\{x\in M \mid d_\mathrm{max}(p,x)<r\}$.
\end{lemma}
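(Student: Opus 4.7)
The plan is to mirror the proof of Lemma \ref{lem:biLipschitzpropagate} in Section \ref{sec:StructureEuclidean}, with modifications dictated by the Finslerian setting. Since $p\in\Sigma_2(N)$ with $\rad_N(p)>0$, the set $\pi_N(p)$ must consist of exactly two distinct points $q_1,q_2$: otherwise the two elements of $\pi_{UN}(p)$ would project to a single point of $N$, forcing $\rad_N(p)=0$. First I would split $N$ into the two closed pieces
\begin{equation*}
    N_j := N\cap\{y\in M \mid d_{\max}(q_j,y)\leq \tfrac{1}{4}\rad_N(p)\},\quad j=1,2,
\end{equation*}
which are compact and disjoint in view of $d_{\max}(q_1,q_2)=\rad_N(p)>0$.

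Next I would set $f:=d_{N_1}-d_{N_2}$, which is DC on a neighborhood of $p$ by Lemma \ref{lem:semiconvexity} and Lemma \ref{lem:DC_fundamental_properties}(i). Using the upper semicontinuity of $\pi_N$, I would choose $r>0$ small enough that $\pi_N(x)\subset N_1\cup N_2$ for every $x\in B_r(p)$, so that $d_N=\min\{d_{N_1},d_{N_2}\}$ on $B_r(p)$. Then $f(x)=0$ forces $d_{N_1}(x)=d_{N_2}(x)=d_N(x)$ and hence $x\in\Sigma(N)$, while $f(x)\neq 0$ forces $\pi_N(x)\subset N_j$ for a single $j$, and two applications of the triangle inequality for the symmetric distance $d_{\max}$ give $\rad_N(x)\leq \tfrac{1}{2}\rad_N(p)$. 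This is precisely the radius estimate claimed.

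The final step is to invoke the DC implicit function theorem on a manifold (Theorem \ref{thm:DC_IFT_mfd}) with $k=1$ to conclude that $S_p:=f^{-1}(0)\cap B_{\delta(p)}(p)$ is a DC hypersurface for some small $\delta(p)\in(0,r]$. To this end I must check that $\partial^*f(p)$ is a singleton consisting of a nonzero covector. Since $p\in\Sigma_2(N)$ and the two elements of $\pi_{UN}(p)$ split between $N_1$ and $N_2$, the point $p$ admits a unique $N_j$-segment $\alpha_j$ for each $j$. Setting $v_j:=\alpha_j'(d_N(p))$, the characterization \eqref{eq:distance_differential} gives $d^*d_{N_j}(p)=\{g_{v_j}(v_j,\cdot)\}$, so each $d_{N_j}$ is strictly differentiable at $p$ and hence so is $f$, with $df_p=g_{v_1}(v_1,\cdot)-g_{v_2}(v_2,\cdot)$.

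The main obstacle, and the one genuinely new feature compared with the Euclidean argument, is verifying that $df_p\neq 0$. First, $v_1\neq v_2$ in $T_pM$, since otherwise the two unit-speed geodesics $\alpha_1,\alpha_2$ would arrive at $p$ with identical terminal velocities and, when run backwards from $p$, would coincide by uniqueness of geodesics, forcing $q_1=q_2$. Second, the Finslerian Legendre-type map $v\mapsto g_v(v,\cdot)$ is injective on the indicatrix $S_pM$ thanks to strict convexity of $\{F(p,\cdot)\leq 1\}$, so $v_1\neq v_2$ yields $df_p\neq 0$. Hence $\partial^*f(p)=\{df_p\}$ is a nonzero singleton, $f(p)=0$, and Theorem \ref{thm:DC_IFT_mfd} produces the DC hypersurface $S_p$ as required.
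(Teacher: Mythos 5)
Your proposal is correct and follows essentially the same route as the paper's proof: same sets $N_1,N_2$, same $f=d_{N_1}-d_{N_2}$, same use of upper semicontinuity of $\pi_N$, same radius estimate via the triangle inequality for $d_{\max}$, and the same appeal to the DC implicit function theorem on a manifold. The only (minor) variation is in verifying $df_p\neq 0$: the paper invokes the fundamental inequality $\omega_1(v_1)=1>\omega_2(v_1)$ from Bao--Chern--Shen directly, whereas you factor this into ``terminal velocities are distinct by geodesic uniqueness'' plus ``the Legendre map $v\mapsto g_v(v,\cdot)$ is injective on the indicatrix'' --- these are equivalent in content, and your version in fact makes explicit the $v_1\neq v_2$ step that the paper's strict inequality tacitly requires.
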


\begin{proof}
  Let $\pi_N(p)=\{q_1,q_2\}$.
  Then $q_1\neq q_2$ by $\rad_N(p)>0$.
  Let $N_j:=N\cap\{ y\in M \mid d_{\max}(q_j,y)\leq \frac{1}{4}\rad_N(p) \}$.
  The sets $N_1$ and $N_2$ are compact and, since $\rad_N(p)>0$, they are (relative) neighborhoods of $q_1$ and $q_2$ in $N$.
  In addition, by $\rad_N(p)=d_{\max}(q_1,q_2)$, we have
  \begin{equation}\label{eq20}
    N_1\cap N_2=\emptyset,
  \end{equation}
  since otherwise there would be $y\in N_1\cap N_2$ but then $\rad_N(p)=d_{\max}(q_1,q_2)\leq d_{\max}(q_1,y)+d_{\max}(y,q_2)\leq \frac{1}{2}\rad_N(p)$, which contradicts $\rad_N(p)>0$.

  Let $f:=d_{N_1}-d_{N_2}=d(N_1,\cdot)-d(N_2,\cdot)$.
  Since $\pi_N$ is set-valued upper semicontinuous, there is $r>0$ such that for any $x\in B_r(p)$ we have $\pi_N(x)\subset N_1\cup N_2$, and in particular $d_N(x)=\min\{d_{N_1}(x),d_{N_2}(x)\}$.
  Hence, if $x\in B_r(p)$ and $f(x)=0$, then $d_{N_1}(x)=d_{N_2}(x)=d_N(x)$ and hence, by \eqref{eq20}, $\#\pi_N(x)\geq2$.
  Therefore, for any $\delta\in(0,r]$,
  \begin{equation*}
    S_{p,\delta} := f^{-1}(0)\cap B_\delta(p)\subset \Sigma(N).
  \end{equation*}
  In addition, if $x\in(\Sigma(N)\cap B_\delta(p))\setminus S_{p,\delta}$, then either $\pi_N(x)\subset N_1$ or $\pi_N(x)\subset N_2$ holds (depending on the sign of $f(x)$) and hence $\rad_N(x)\leq\frac{1}{2}\rad_N(p)$.

  We finally prove that for a suitable $\delta:=\delta(p)\in(0,r]$ the set $S_p:=S_{p,\delta(p)}$ is a DC hypersurface.
  Notice that $f$ is DC around $p$, cf.\ Lemma \ref{lem:semiconvexity} and Lemma \ref{lem:DC_fundamental_properties} (i).
  Since $p$ admits a unique $N_j$-segment $\alpha_j$ from $q_j$ for $j=1,2$, letting $v_j:=\alpha_j'(d_N(p))$ and $\omega_j:=g_{v_j}(v_j,\cdot)$, we have $\partial^*d_{N_j}(p)=d^* d_{N_j}(p)=\{\omega_j\}$ by \eqref{eq:distance_differential}.
  Hence we obtain $\partial^* f(p)=\{\omega_1-\omega_2\}$ (as in the proof of Lemma \ref{lem:biLipschitzpropagate}).
  Since $\omega_1(v_1)=1>\omega_2(v_1)$ by \cite[(1.2.4), (1.2.16)]{Bao2000}, we have $\omega_1\neq \omega_2$.
  Thanks to this fact with $f(p)=0$, we now deduce by Theorem \ref{thm:DC_IFT_mfd} (with $k=1$) that there is a small positive number $\delta(p)>0$ such that $S_p=f^{-1}(0)\cap B_{\delta(p)}(p)$ is a Lipschitz hypersurface.
\end{proof}

We are now in a position to complete the proof of Theorem \ref{thm:main_singular_set}, mostly appealing to the proof of Theorem \ref{thm:Euclidean_singular_set}.

\begin{proof}[Proof of Theorem \ref{thm:Finslerian_singular_set}]
  We mainly follow the proof of Theorem \ref{thm:Euclidean_singular_set} with $\mathbf{R}^m$ replaced by $M$, and in particular Lemma \ref{lem:biLipschitzpropagate} by Lemma \ref{lem:biLipschitzpropagate_Finslerian}.
  In what follows we only emphasize the differences.

  Concerning Step 1, the only difference we need to be careful is that for a compact set $K\subset M$, the set $\Sigma_2(N)\cap K$ may not be equal to $\bigcup_{i=1}^{\infty} A_i$ since $\{p\in\Sigma_2(N)\cap K\mid\rad_N(p)=0\}$ may not be empty, where $A_i$ is defined in the same way as in \eqref{eq:A_i} by using $\rad_N(K)$ defined by \eqref{eq:rad_K} with $\rad_N(p)$ interpreted as \ref{eq:rad_mfd}.
  To overcome this difference, it is sufficient to just choose a constant $c>0$ as in Lemma \ref{lem:closedneighborhood} (depending on $K$) and to replace $N$ by $N_c$.
  Then the completely parallel argument implies that for any compact set $K\subset M$ the set $\Sigma_2(N_c)\cap K =\Sigma_2(N)\cap K$ is covered by an at most countably many DC hypersurfaces.
  Taking an increasing sequence of $K$ completes the proof.

  Concerning Step 2, there is no essential difference but we need to interpret some objects through differentials, so we give a precise argument.
  Choose any $p\in\Sigma(N)\setminus\Sigma_2(N).$
  There exist at least three distinct $N$-segments $\alpha_j:[0,L]\to M$ such that $\alpha_j(L)=p$, where $j=1,2,3$ and $L:=d_N(p)$, and there are the 1-forms $\omega_j(\cdot):=g_{v_j}(v_j,\cdot)$ corresponding to the unit vectors $v_j:=\alpha_j'(L)\in T_pM$, $j=1,2,3$.
  Now we verify that
  \begin{equation}\label{eq:0325_01}
    \text{the three 1-forms $\omega_1,\omega_2,\omega_3$ are not colinear.}
  \end{equation}
  In fact, suppose that they are colinear. We may assume that $\omega_1$ is between $\omega_2$ and $\omega_3,$ i.e., there exists a $\lambda\in(0,1)$ satisfying $\omega_1=\lambda\omega_2+(1-\lambda)\omega_3.$
  Since $\omega_1(v_1)=1$, $\omega_2(v_1)<1$, $\omega_3(v_1)<1$ by \cite[(1.2.4), (1.2.16)]{Bao2000},
  we obtain
  \begin{equation*}
  1=\omega_1(v_1)=\lambda\omega_2(v_1)+(1-\lambda)\omega_3(v_1)<1,
  \end{equation*}
  which is a contradiction.
  Hence \eqref{eq:0325_01} holds true.
  This means that
  \begin{equation*}
    \text{$\Sigma(N)\setminus\Sigma_2(N)\subset \Sigma^2(d_N)$, where $\Sigma^{k}(d_N):=\{ p\in M \mid \dim \partial^*d_N(p)\geq k\}$.}
  \end{equation*}
  Thanks to Lemma \ref{lem:semiconvexity},
  the function $d_N\circ\varphi^{-1}$ is locally semi-concave and hence locally DC for any local coordinate system $(U,\varphi)$.
  Therefore, by \cite[Theorem 8]{Zajicek1983}, the set
  $\Sigma^2(d_N\circ\varphi^{-1})$ is covered by countably many $(m-2)$-dimensional DC hypersurfaces in $\R^m$.
  By Definition \ref{def:DCsubmfd_Manifold} and Lemma \ref{lem:well_defined_definition_DC},
  the set
  $\Sigma^{2}(d_N)$ is also covered by countably many $(m-2)$-dimensional DC hypersurfaces in $M$.
\end{proof}

\begin{proof}[Proof of Theorem \ref{thm:main_singular_set}]
  It is now a direct consequence of Theorem \ref{thm:Finslerian_singular_set}.
\end{proof}

\begin{remark}[$C^2$-rectifiability]
    As already mentioned, by a covering argument one can even deduce that the set $R$ in Theorem \ref{thm:main_singular_set} is covered by a countable family of $C^2$-hypersurfaces up to a negligible set with respect to the $\mathcal{H}^{m-2}$-measure.
\end{remark}

\section{Fine structure in dimension two}\label{sec:Structure2D} 

In this final section we prove that if the ambient manifold is two dimensional, then the exceptional set in Theorem \ref{thm:Finslerian_singular_set} is also covered by a countable family of DC Jordan arcs, thus proving Theorem \ref{thm:2D_singular_set}.
Here and hereafter a Jordan arc means a continuous injective curve.

\begin{definition}[DC Jordan arc]\label{def:DC_Jordan_arc}
    A subset $S\subset M^m$ is called \emph{DC Jordan arc} if $S=c([a,b])$ holds for some Jordan arc $c:(a-\epsilon,b+\epsilon)\to M^m$ with $\epsilon>0$ such that $c((a-\epsilon,b+\epsilon))$ is a one-dimensional DC submanifold.
\end{definition}

\subsection{Sectors}

A key tool here is a sector from differential geometry.
The concept of a sector at a cut point was first introduced in
\cite{shiohama1993}.
The sector is a very powerful tool and has been used to show various structures and properties of the cut locus of a smooth Jordan curve in a 2-dimensional complete Riemannian manifold
(see e.g.\ 
\cite[Theorem (Fiala--Hartman--Shiohama--Tanaka)]{Itoh2001}, \cite[Propositions 4.2.2, 4.2.3]{SST2003}).
In \cite{Sabau2016}, the detailed structure of the cut locus of $N$ was studied by making use of sectors.
Our investigation below is also in such a direction.

To begin with, we need to review convex balls before  introducing a sector at a cut point.
Let $M$ denote any dimensional Finsler manifold which is not always complete. For each point $p\in M$, let $B_\delta(p)$ denote the forward or backward open ball centered at $p$ with radius 
$\delta>0$. 
If $\delta$ is sufficiently small, then $B_\delta(p)$ has the following property:

{\it For any pair of points $x,y$ in the closure $\overline{B_\delta(p)}$,
there exists a unique minimal geodesic segment $\gamma:[a,b]\to M$ joining $x$ to $y$ with $\gamma((a,b))\subset B_\delta(p)$.}

If a ball $B_\delta(p)$ satisfies the above property,  then the ball is said to be \textit{strongly convex.} 
In fact, for each point $p$ the function $d^2(p,\gamma(t))$ (resp.\ $d^2(\gamma(t),p))$ is a strongly convex function for any unit speed geodesic segment $\gamma:[a,b]\to B_\delta(p)$ if $B_\delta(p)$ is a forward (resp.\ backward) ball and if $\delta$ is sufficiently small (cf.\ \cite [Section 2.8, Whitehead's Theorem]{Postnikov1967}).
Thus, if $B_{\delta_0}(p)$ is strongly convex, then $B_\delta
(p)$ is also strongly convex for all $\delta\in(0,\delta_0)$.
The existence of a strongly convex ball at each point is well known in Riemannian Geometry, and proven in \cite{Shiohama2019} in a Finsler manifold.

From now on, $N$ denotes a closed subset of a complete 2-dimensional Finsler manifold $M$.
Let $p\in\Sigma(N)$, and choose a small number $\delta_0>0$ such that the forward ball
$B_{\delta_0}(p):=\{q\in M \mid d(p,q)<\delta_0\}$ is strongly convex and $\overline{B_{\delta_0}(p)}\subset M\setminus N$.
Let us introduce a sector at $p$.

\begin{definition}[Sector]
Each connected component of 
$B_{\delta_0}(p)
\setminus 
\bigcup_{\gamma\in \Gamma_p}\gamma([0,d_N(p)])$,
where $\Gamma_p$ denotes the set of all $N$-segments to $p$, is called \textit{sector} at $p$.
\end{definition}

\begin{remark}
We collect some elementary properties of a sector.
\begin{itemize}
    \item There exists at least one sector at $p\in\Sigma(N)$ if and only if $p$ is not an isolated point of $\Sigma(N)$.
    \item Each sector is an open set because the set $\bigcup_{\gamma\in \Gamma_p}\gamma([0,d_N(p)])$ is closed. This can be proven by using the fact that any sequence of $N$-segments with two endpoints convergent to two distinct points converges to an $N$-segment. The set-valued upper semi-continuity of $\pi_N$ also follows for the same reason.
    \item Each sector is wedged between exactly two $N$-segments in $\Gamma_p$. This is because each pair of elements of $\Gamma_p$ have intersections only at the terminal endpoint, and possibly also at the initial endpoint.
\end{itemize}
\end{remark}

\if0
\begin{definition}
If a cut point $p$ of $N$ admits at least three sectors, then the point is called {\it branch cut point,}  and if the point has a unique sector, then it is called {\it endpoint.}
\end{definition}

In \cite{Sabau2016}, the detail structure of the cut locus of $N$ was studied by making use of sectors.

\begin{theorem}\label{thm:Finsler_2dim_Sabau_Tanaka}
Let $N$ be a closed subset of a complete 2-dimensional Finsler manifold. Then the cut locus $C(N)$ of $N$ has the following six properties.
\begin{itemize}
\item[(1)]
The cut locus $C(N)$ of $N$ has a local tree structure and any two cut points in the same connected component of $C(N)$ can be joined by a rectifiable Jordan arc in the cut locus.
\item[(2)]
There exist at most countably many rectifiable Jordan arcs $\{m_i\}_{i}$ satisfying
$C(N)\setminus C^e(N)=\bigcup_{i=1}^{\infty}m_i,$
where $C^e(N)$ denotes the set of all endpoints.
\item[(3)] 
  The topology of $C(N)$ induced from the
  intrinsic metric $\delta$ coincides with the topology induced from $M.$
\item[(4)]
For any closed subset $F$ disjoint from $N,$ 
$(C(N)\cap F,\delta)$  is a complete metric space.
\item[(5)]
The set $\Sigma(d_N)\cap C^e(N)$ is at most countable.
\item[(6)]
The cut locus $C(N)$ admits  at most countably many
branch cut points. 
\end{itemize}
\end{theorem}
Moreover, the following theorem, which
generalizes \cite[Fiala-Hartman-Shiohama-Tanaka]{Itoh2001}, was also proven in \cite{Sabau2016}.
\begin{theorem}\label{thm:exceptional_value}
There exists a set $E\subset [0,\sup d_N)$ of measure zero with the following properties:
\begin{itemize}
\item[(1)]
For each $t\in(0,\sup d_N)\setminus E,$ the level set $d_N^{-1}(t)$ consists of locally finitely many mutually disjoint arcs. In particular, if $N$ is compact, then $d_N^{-1}(t)$ consists of finitely many mutually disjoint arcs.
\item[(2)]
For each $t\in(0,\sup d_N)\setminus E,$  any point $q\in d_N^{-1}(t)$ admits at most two $N$-segments.
\end{itemize}
\end{theorem}
\begin{remark}
Theorem \ref{thm:exceptional_value} is generalized for a compact subset of a metric space (\cite{Shiohama1996}).
\end{remark}
\fi

\subsection{Propagation of DC Jordan arcs}

Throughout this subsection, we fix a non-isolated point $p\in\Sigma(N)$ and a strongly convex ball $B_{\delta_0}(p)$ in order to determine a sector at $p$.
Since $B_{\delta_0}(p)$ is strongly convex, each $\gamma\in\Gamma_p$ intersects the circle
$S_{\delta_0}(p):=\{ q\in M \mid d(p,q)=\delta_0\}$ at a unique point $q(\gamma)$.
Let $\textbf{\textup{Sec}}(\gamma_1,\gamma_2)$ denote the sector formed by two $N$-segments $\gamma_1,\gamma_2\in\Gamma_p$ together with the subarc of 
$S_{\delta_0}(p)$ cut off 
by $\gamma_1$ and $\gamma_2$; let $[q_1,q_2]$ denote such a subarc, where $q_i:=q(\gamma_i)$.
Without loss of generality we may assume that all $N$-segments of $\Gamma_p$ have different initial endpoints from each other, by replacing $N$ by $N_c$ in Lemma \ref{lem:closedneighborhood} with $K=\overline{B_{\delta_0}(p)}$ if necessary.
Therefore, by the set-valued upper semicontinuity of $\pi_N$ we can choose sufficiently short subarcs $\tilde N_1\ni q_1$ and $\tilde N_2\ni q_2$ of $[q_1,q_2]$, 
so that the two subsets $N_1:=\pi_N(\tilde N_1)$
and $N_2:=\pi_N(\tilde N_2)$ of $N$ are disjoint.
The following property is particularly important.

\begin{lemma}\label{lem:unique_N_i-segment_endpoints}
For each $i=1,2$, the curve $\gamma_i$ is a unique $N_i$-segment to $p$.
\end{lemma}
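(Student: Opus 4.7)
The plan is to prove existence of $\gamma_i$ as an $N_i$-segment to $p$ and then derive uniqueness by contradiction. For existence, writing $q_i=\gamma_i(t_i)$ with $t_i\in(0,d_N(p))$, the restriction $\gamma_i|_{[0,t_i]}$ is itself an $N$-segment to $q_i$, so $\gamma_i(0)\in\pi_N(q_i)\subset\pi_N(\tilde N_i)=N_i$. Since $N_i\subset N$, one has $d(N_i,p)\geq d_N(p)\geq d(\gamma_i(0),p)$, so $d(N_i,p)=d(\gamma_i(0),p)=d_N(p)$, and $\gamma_i$ realizes this distance, making it an $N_i$-segment.

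For uniqueness I would suppose $\alpha$ is another $N_i$-segment to $p$; then $\alpha(0)\in N_i\subset N$ and $\alpha$ has length $d_N(p)$, so $\alpha\in\Gamma_p$. A crucial intermediate step is to establish $\pi_N(q_i)=\{\gamma_i(0)\}$: for any $z\in\pi_N(q_i)$ with associated $N$-segment $\beta$ from $z$ to $q_i$, the concatenation of $\beta$ and $\gamma_i|_{[t_i,d_N(p)]}$ is a curve of length $d_N(p)$ from $z\in N$ to $p$, hence a minimizer and therefore a smooth geodesic with no corner at $q_i$; matching of unit velocities at $q_i$ then forces $\beta=\gamma_i|_{[0,t_i]}$ by uniqueness of the geodesic flow, so $z=\gamma_i(0)$. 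Thanks to the standing assumption that elements of $\Gamma_p$ have pairwise distinct initial endpoints, one has $\alpha=\gamma_i$ as soon as $\alpha(0)=\gamma_i(0)$, so it suffices to derive a contradiction under the assumption $\alpha=\gamma_k\in\Gamma_p$ with $\gamma_k(0)\neq\gamma_i(0)$.

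Since $\gamma_k(0)\in N_i=\pi_N(\tilde N_i)$, there exist $y\in\tilde N_i$ and an $N$-segment $\tilde\beta$ from $\gamma_k(0)$ to $y$ of length $d_N(y)$. Concatenating $\tilde\beta$ with the unique minimizing geodesic $\sigma_y$ from $y$ to $p$ inside the strongly convex ball $\overline{B_{\delta_0}(p)}$ gives a curve from $\gamma_k(0)$ to $p$ of length $d_N(y)+d(y,p)\geq d_N(p)$. In the equality case the concatenation is a smooth minimizing geodesic, equal to $\gamma_k$ by distinctness of initial endpoints, so $y$ lies on $\gamma_k$; but $\gamma_k\cap S_{\delta_0}(p)=\{q(\gamma_k)\}$, hence $y=q(\gamma_k)$, which contradicts the sector structure --- either $k\in\{1,2\}\setminus\{i\}$, in which case $\gamma_k(0)\in N_k$ forces $N_i\cap N_k\neq\emptyset$, or $k\notin\{1,2\}$, in which case $q(\gamma_k)\notin[q_1,q_2]\supset\tilde N_i$ by the definition of the sector. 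In the strict inequality case I would shrink $\tilde N_i$ further, using upper semicontinuity of $\pi_N$ together with $\pi_N(q_i)=\{\gamma_i(0)\}$ to place $N_i$ in an arbitrarily small neighborhood of $\gamma_i(0)$, and then use the sector geometry --- any $\gamma_k\in\Gamma_p\setminus\{\gamma_i\}$ whose initial endpoint clusters at $\gamma_i(0)$ must have $q(\gamma_k)$ on the complementary arc $S_{\delta_0}(p)\setminus[q_1,q_2]$ on the side of $q_i$ opposite to $\tilde N_i$ --- to show that every $y\in\tilde N_i$ is strictly closer to $\gamma_i(0)$ than to $\gamma_k(0)$, contradicting $\gamma_k(0)\in\pi_N(y)$.

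The main obstacle I anticipate is this last step, controlling the situation where infinitely many elements of $\Gamma_p$ may have initial endpoints clustering at $\gamma_i(0)$. Upper semicontinuity of $\pi_N$ by itself does not uniformly exclude such clustering $\gamma_k(0)$ from $N_i$ for a fixed $\tilde N_i$, so the argument must exploit the sector structure --- in particular, that such clustering necessarily originates from the arc complementary to $[q_1,q_2]$ --- in order to compare the relevant Finslerian forward distances from $y\in\tilde N_i$ to $\gamma_i(0)$ and to $\gamma_k(0)$ and recover the strict inequality needed to close the argument.
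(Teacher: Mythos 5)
Your proposal follows the paper's outline exactly: first observe that $\gamma_i$ is itself an $N_i$-segment, deduce that any $N_i$-segment $\alpha$ to $p$ is therefore also an $N$-segment with $\alpha(0)\in N_i$, and then use the absence of $N$-segments to $p$ in $\textbf{\textup{Sec}}(\gamma_1,\gamma_2)$ to force $\alpha=\gamma_i$. The paper's entire uniqueness step is compressed into the single sentence ``Since there is no $N$-segment to $p$ passing through a point of $\textbf{\textup{Sec}}(\gamma_1,\gamma_2)$, $\alpha$ must be $\gamma_i$''; it does not carry out the case split on $y=q_i$ versus $y\neq q_i$, nor the concatenation $\tilde\beta\ast\sigma_y$, nor the observation $\pi_N(q_i)=\{\gamma_i(0)\}$ that you supply. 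Your treatment is a more explicit elaboration of the same idea, and the intermediate facts you record are all correct.

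The gap you flag at the end is genuine, and as written the proposal does not close it. When $\alpha\in\Gamma_p\setminus\{\gamma_1,\gamma_2\}$, $\alpha(0)\in\pi_N(y)$ with $y\in\tilde N_i$ interior to the arc $[q_1,q_2]$, your argument only delivers $d_N(y)+d(y,p)>d_N(p)$, which is not by itself a contradiction---that inequality merely re-expresses that $y$ is not on any $N$-segment to $p$, which is true. The repair you gesture at (a two-dimensional separation argument, after shrinking $\tilde N_i$) is the right idea and does close the hole: for $\tilde N_i$ small, any $N$-segment $\tilde\beta$ from $\alpha(0)\in N_i$ to $y\in\tilde N_i$ lies in a small tubular neighborhood of $\gamma_i|_{[0,t_1]}$ (where $q_i=\gamma_i(t_1)$); since $\alpha$ and $\gamma_i$ are disjoint except at $p$, $\alpha(0)$ lies on the complementary side of $\gamma_i$ in that neighborhood while $y$ lies on the sector side, so $\tilde\beta$ must cross $\gamma_i$ at an interior point $w$; then $\pi_N(w)=\{\gamma_i(0)\}$ and $\alpha(0)\neq\gamma_i(0)$ give $d(\alpha(0),w)>d_N(w)=d(\gamma_i(0),w)$, whence $d(\alpha(0),y)=d(\alpha(0),w)+d(w,y)>d(\gamma_i(0),w)+d(w,y)\geq d(\gamma_i(0),y)\geq d_N(y)$, contradicting $\alpha(0)\in\pi_N(y)$. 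You correctly anticipate all the ingredients of this argument but do not assemble them, so the proposal stops short of a complete proof even though the route matches the paper's in spirit.
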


\begin{proof}
Let $\alpha$ be any  $N_i$-segment.
Since $\gamma_i$ is an $N_i$-segment, the length of $\alpha$ is equal to that of $\gamma_i.$
Thus,
$\alpha$ is also an $N$-segment emanating from a point of $N_i.$ 
Noting that $\alpha(0)\in N_i=\pi(\tilde{N}_i)$ and there exists at most one $N$-segment emanating from each point of $N$, we see that $\alpha$ must pass through a point of $\tilde{N_i}$.
Since there is no $N$-segment to $p$ passing through  a point of $\textbf{\textup{Sec}}(\gamma_1,\gamma_2),$ $\alpha$ must be  $\gamma_i.$
\end{proof}

We now prove the main propagation result.
We say that a map $f:U\to M^m$ on an open subset $U\subset\R^k$ is locally DC around $p\in U$ (resp.\ strictly differentiable at $p\in U$) if for some (in fact any) local chart $\varphi$ around $f(p)$ the map $\varphi\circ f$ is locally DC in a neighborhood of $p$ (resp.\ strictly differentiable at $p$).
 
\begin{proposition}\label{prop:DCcurve_singular_admit_sector}
    Let $\textbf{\textup{Sec}}(\gamma_1,\gamma_2)$ be any sector at $p\in \Sigma(N).$
    Then there exists a locally DC Jordan arc $c:(-\epsilon_0,\epsilon_0)\to M$ such that
    $c(0)=p$, $c(t)$ is strictly differentiable at $t=0$ with $|c'(0)|\neq0$, $c((-\epsilon_0,\epsilon_0))$ is a one-dimensional DC submanifold, and $c((0,\epsilon_0))\subset\textbf{\textup{Sec}}(\gamma_1,\gamma_2)\cap\Sigma(N).$
    In particular, $c([0,\epsilon_0/2])\subset \Sigma(N)$ is a DC Jordan arc starting from $p$.
\end{proposition}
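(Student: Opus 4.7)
The plan is to construct the desired arc by applying the DC implicit function theorem to $f:=d_{N_1}-d_{N_2}$, mimicking Lemma \ref{lem:biLipschitzpropagate_Finslerian}, and then to use a tangent-line computation in $T_pM$ to select the branch of the resulting zero set that enters the prescribed sector. First I would check that $f$ is DC on a neighborhood of $p$ via Lemma \ref{lem:semiconvexity} and Lemma \ref{lem:DC_fundamental_properties} (i). Combining Lemma \ref{lem:unique_N_i-segment_endpoints} with \eqref{eq:distance_differential} yields $\partial^* d_{N_i}(p)=\{\omega_i\}$ for $\omega_i:=g_{v_i}(v_i,\cdot)$ and $v_i:=\gamma_i'(d_N(p))$, whence $\partial^* f(p)=\{\omega_1-\omega_2\}$ is a singleton that is nonzero by $(\omega_1-\omega_2)(v_1)=1-\omega_2(v_1)>0$ (cf.\ \cite[(1.2.4), (1.2.16)]{Bao2000}). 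Theorem \ref{thm:DC_IFT_mfd} with $k=1$ then produces a neighborhood of $p$ in which $\{f=0\}$ is a one-dimensional DC submanifold, giving a DC parametrization $c:(-\epsilon_0,\epsilon_0)\to M$ with $c(0)=p$. Since $\partial^* f(p)$ is a singleton, $f$ is strictly differentiable at $p$; writing $c$ in a local chart as the DC graph delivered by the implicit function theorem and using the Lipschitz bound on the graphing map gives a direct estimate that upgrades $c$ to be strictly differentiable at $0$ with $|c'(0)|\neq 0$. The inclusion $\{f=0\}\subset\Sigma(N)$ in a small neighborhood of $p$ follows exactly as in Lemma \ref{lem:biLipschitzpropagate_Finslerian}, using $N_1\cap N_2=\emptyset$ and the set-valued upper semicontinuity of $\pi_N$.

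The main obstacle is the orientation step: showing that, after possibly replacing $c(t)$ with $c(-t)$, the branch $c((0,\epsilon_0))$ stays inside $\textbf{\textup{Sec}}(\gamma_1,\gamma_2)$. Here two-dimensionality is essential. Along each boundary $N$-segment, minimality gives $f(\gamma_1(s))=s-d_{N_2}(\gamma_1(s))<0$ for $s$ slightly less than $d_N(p)$ (as $q_1\notin N_2$), and symmetrically $f(\gamma_2(s))>0$; differentiating at $s=d_N(p)$ yields $(\omega_1-\omega_2)(-v_1)<0<(\omega_1-\omega_2)(-v_2)$. Hence the kernel line of the nonzero covector $\omega_1-\omega_2$ in $T_pM$ strictly separates the two inward bounding directions $-v_1,-v_2$ of the sector, so that going once around the unit circle in $T_pM$ from $-v_1$ to $-v_2$ through the sector's tangent wedge crosses the kernel line exactly once. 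Since $\pm c'(0)$ spans this kernel line, exactly one of the two rays $\R_{>0}c'(0)$ and $\R_{>0}(-c'(0))$ lies in the open tangent wedge of $\textbf{\textup{Sec}}(\gamma_1,\gamma_2)$ at $p$; orienting $c$ so that $c'(0)$ does, strict differentiability at $0$ forces $c(t)$ into the open sector for all small $t>0$. Finally, $f$ does not vanish on $(\gamma_1\cup\gamma_2)\setminus\{p\}$ near $p$, so the branch $c((0,\epsilon_0))$ cannot recross the bounding $N$-segments; after shrinking $\epsilon_0$ it also stays clear of the spherical boundary of $B_{\delta_0}(p)$, yielding $c((0,\epsilon_0))\subset\textbf{\textup{Sec}}(\gamma_1,\gamma_2)\cap\Sigma(N)$ as required.
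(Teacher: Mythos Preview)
Your approach is essentially the paper's: apply the DC implicit function theorem to $f=d_{N_1}-d_{N_2}$, use strict differentiability of $f$ at $p$ to get strict differentiability of the parametrization, and select the correct branch by checking that $v_1,v_2$ lie on opposite sides of the kernel of $df_p=\omega_1-\omega_2$. Your tangent-wedge argument for the orientation is a slightly more detailed version of the paper's one-line ``$v$ or $-v$ bisects the angle of the sector''.

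There is one genuine gap. You assert that ``the inclusion $\{f=0\}\subset\Sigma(N)$ in a small neighborhood of $p$ follows exactly as in Lemma~\ref{lem:biLipschitzpropagate_Finslerian}''. That lemma uses $p\in\Sigma_2(N)$, so that upper semicontinuity of $\pi_N$ forces $\pi_N(x)\subset N_1\cup N_2$ for \emph{all} $x$ near $p$, whence $d_N(x)=\min\{d_{N_1}(x),d_{N_2}(x)\}$. Here $p$ may admit many $N$-segments (several sectors), so $\pi_N(p)\not\subset N_1\cup N_2$ in general, and the argument does not transfer to a full neighborhood: a point $x$ with $d_{N_1}(x)=d_{N_2}(x)$ lying \emph{outside} the sector may well have $d_N(x)<d_{N_1}(x)$ and need not be singular. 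The paper handles this by first restricting to the sector: for $q\in B_{\delta_1}(p)\cap\textbf{\textup{Sec}}(\gamma_1,\gamma_2)$, any $N$-segment to $q$ must limit to $\gamma_1$ or $\gamma_2$ as $q\to p$ (since no $N$-segment can cross the bounding geodesics $\gamma_1,\gamma_2$), hence $d_N(q)=\min\{d_{N_1}(q),d_{N_2}(q)\}$ there, and only then does $\{f=0\}\cap\textbf{\textup{Sec}}(\gamma_1,\gamma_2)\subset\Sigma(N)$ follow. Since your orientation step already places $c((0,\epsilon_0))$ inside the sector, the fix is simply to swap the logical order: prove the sector inclusion first, then deduce $c((0,\epsilon_0))\subset\Sigma(N)$ from the sector-restricted version of the $\{f=0\}$ inclusion.
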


\begin{proof}
    By Lemma \ref{lem:unique_N_i-segment_endpoints},
    for each $i,$ the function $d_{N_i}$ is differentiable at $p$ and its generalized differential is given by
    \begin{equation*}
    \partial^*d_{N_i}(p)=\{\omega_i\},
    \end{equation*}
    where $\omega_i(\cdot):=g_{v_i}(v_i,\cdot)$ and $v_i:=\gamma'_i(d_N(p)).$
    By Lemmas \ref{lem:well_defined_definition_DC}
    and
    \ref{lem:semiconvexity}, $d_{N_1}$ and $d_{N_2}$ are locally DC on $M.$
    For any sequence $\{\alpha_i:[0,L_i]\to M\}_i$
    of $N$-segments satisfying $\lim_{i\to\infty}\alpha_i(L_i)=p$ and $\alpha_i(L_i)\in\textbf{\textup{Sec}}(\gamma_1,\gamma_2)$ for each $i,$
    any limit $N$-segment of the sequence is an $N$-segment to $p,$ which is either $\gamma_1$ or $\gamma_2.$
    Thus, there exists an open ball $B_{\delta_1}(p),$ where $\delta_1\in(0,\delta_0)$,
    such that $d_N(q)=\min\{d_{N_1}(q),d_{N_2}(q)\}$
    holds for any
    $q\in B_{\delta_1}(p)\cap\textbf{\textup{Sec}}(\gamma_1,\gamma_2)$.
    In particular, if we let $f:=d_{N_1}-d_{N_2}$, then $B_{\delta_1}(p)\cap\textbf{\textup{Sec}}(\gamma_1,\gamma_2)\cap \{f=0\} \subset \textbf{\textup{Sec}}(\gamma_1,\gamma_2) \cap\Sigma(N)$.

    \if0
    From Theorem \ref{thm:DC_IFT_mfd}
    it follows that there exists an open neighborhood $U$ of $p$
    such that $\{q\in U \mid d_{N_1}(q)=d_{N_2}(q)\}$
    is a one-dimensional DC submanifold of $M$.
    Without loss of generality,
    we may assume that $U=B_{\delta_1}(p)$, so that
    $S_1:=\{q\in B_{\delta_1}(p) \mid d_{N_1}(q)=d_{N_2}(q)\}$ is a one-dimensional DC submanifold.
    \fi

    \if0
    By Theorem \ref{thm:DC_IFT}, $S:=B_{\delta_1}\cap\{f=0\}$ 
    is a one-dimensional DC submanifold of $M$.
    Thus, for any local chart $(U,\varphi)$ with $p\in U$ there exist a DC function $h : (-\epsilon_0,\epsilon_0)\to R$ and an isometry $T$ on $\R^2$ satisfying 
    $(T\circ \varphi)(U\cap S)=\{ (t,h(t)) \mid t\in(-\epsilon_0,\epsilon_0)\}$
    and $T(\varphi(p))=(0,h(0))$.
    Therefore, if we define a Jordan arc $c$
    by $c(t):=(T\circ \varphi)^{-1}(t,h(t))$ for $t\in(-\epsilon_0,\epsilon_0)$, then the arc $c$ satisfies the desired properties.
    \textcolor{red}{strict differentiability?}
    \fi

    We now show that there is a Jordan arc $c:(-\epsilon_0,\epsilon_0)\to M$ with the desired properties.
    Let $(U,\varphi)$ denote a local chart around $p$ with $\varphi(p)=(0,0)$. 
    Since $df_p\ne 0$, without loss of generality, we may assume that $df_p( \frac{\partial}{ \partial y } )\neq 0$, where $(x(q),y(q))=\varphi(q)$ on $U$.
    Define a smooth function $f_1 : U\to R$ by $f_1\circ \varphi^{-1}(x,y)=x $ on $ \varphi(U)$.
    Let $F:=(f_1\circ \varphi^{-1},f\circ\varphi^{-1})$ on $\varphi(U)$, and $G$ be the local inverse of $F$ around $(0,0)$.
    Then $F$ is DC, and hence so is $G$, cf.\ \cite[Theorem 5.2]{Vesely1989}.
    Let $\overline{c}(t):=G(t,0)$.
    Since $df_p( \frac{\partial}{ \partial y } )\neq 0$, the map $\overline{c}$ is of the form $(t,h(t))$, where $h$ is DC as so is $\bar{c}$.
    Define $c(t):=\varphi^{-1}\circ\overline{c}(t)$ around $t=0$.
    Then $c$ is a DC map, being a Jordan arc mapped into $B_{\delta_0}(p)\cap \{f=0\}$, and $c(0)=p$.
    In addition, $F$ is strictly differentiable at $p$ (by differentiability and semi-concavity of $d_{N_1},d_{N_2}$) and hence so is $G$ at $(0,0)$ so that $c$ is strictly differentiable at $t=0$.
    By the formula $\overline{c}(t)=(t,h(t))$ it is clear that $|c'(0)|\neq0$ and the image of $c$ is a one-dimensional DC submanifold around $t=0$.
    
    \if0
    The image of $c$ is indeed a one-dimensional DC submanifold.
    In fact, since we can choose a local coordinate $\varphi$ around $p$ such that the first coordinate of $\varphi \circ c=(\bar{c}_1,\bar{c}_2)$ has non-vanishing derivative $\bar{c}_1'(0)\neq0$.
    Since $\bar{c}_1$ is DC and strictly differentiable at $t=0$ so that $\partial\bar{c}_1(0)=\{\bar{c}_1'(0)\}$, we can represent the image of $\varphi\circ c$ as the graph of a DC function $\bar{c}_2\circ\bar{c}_1^{-1}$ via the DC inverse function theorem \cite[Theorem 5.2]{Vesely1989}.
    \fi

    It remains to show that, up to the choice of the parameter orientation, $c(t)\in\textbf{\textup{Sec}}(\gamma_1,\gamma_2)$ for all (small) $t>0$.
    By $df_p=d(d_{N_1}-d_{N_2})_p=\omega_1-\omega_2$ we have
    \begin{align*}
        df_p(v_1)
        =1-\omega_2(v_1)
        >0
        >\omega_1(v_2)-1
        =df_p(v_2).
    \end{align*}
    On the other hand, by using the chain rule and the fact that $f(c(t))\equiv0$, if we let $v:=c'(0)$, then
    $$0=\frac{d}{dt}f(c(t))\Big|_{t=0}=df_p(v)=\omega_1(v)-\omega_2(v),$$
    which means that $v$ or $-v$ bisects the angle of the sector.
    Therefore for each $i$ the curves $\gamma_i$ and $c$ transversally intersect, which implies that, after reversing the parameter of $c$ if necessary, we have $c(t)\in \textbf{\textup{Sec}}(\gamma_1,\gamma_2)\cap B_{\delta_0}(p)\cap \{f=0\} \subset \textbf{\textup{Sec}}(\gamma_1,\gamma_2) \cap \Sigma(N)$ for any small $t>0$.
    (In fact there is no need to assume smallness since $\Sigma(N)$ does not intersect with the interior of $\gamma_i$.)
\end{proof}

\begin{proof}[Proof of Theorem \ref{thm:2D_singular_set}]
  Since the exceptional set $\Sigma(N)\setminus S$ in Theorem \ref{thm:Finslerian_singular_set} (with $m=2$) is an at most countable set, the proof is completed by applying Proposition \ref{prop:DCcurve_singular_admit_sector} to each point of $\Sigma(N)\setminus S$ which admits a nonempty sector, or equivalently which is not isolated in $\Sigma(N)$.
\end{proof}

\begin{remark}
    By Proposition \ref{prop:DCcurve_singular_admit_sector} it follows that
    for any pair $\textbf{\textup{Sec}}(\gamma_1,\gamma_2)$ and $ \textbf{\textup{Sec}}(\beta_1,\beta_2)$  of distinct sectors at $p,$
    there exist DC Jordan arcs $c_{\gamma}:(-\epsilon_0,\epsilon_0)\to M, c_{\beta}:(-\epsilon_1,\epsilon_1)\to M$
    such that $c_\gamma(0)=c_\beta(0)=p,$
    $c_\gamma(-\epsilon_0,0)\subset \textbf{\textup{Sec}}(\gamma_1,\gamma_2)\cap\Sigma(d_N)$ and
    $c_\beta(0,\epsilon_1)\subset \textbf{\textup{Sec}}(\beta_1,\beta_2)\cap\Sigma(d_N).$  
    By making use of these curves, we may define a Jordan arc
    $c:(-\epsilon_0,\epsilon_1)\to M$ by 
    $c(t):=c_\gamma(t)$ for $t\in(-\epsilon_0,0]$ and $c(t):=c_\beta(t)$ for $t\in(0,\epsilon_1).$
    It is clear that $c(0)=p$  and $c( (-\epsilon_0,\epsilon_1) )\subset\Sigma(d_N).$
    In addition, from the bisecting property in the proof of Proposition \ref{prop:DCcurve_singular_admit_sector} we deduce that $c'(0-)$ and $c'(0+)$ are not completely opposite.
    Therefore, when we apply Theorem \ref{thm:DC_IFT} (cf.\ Remark \ref{rem:DC_IFT}), we can choose a coordinate direction in which both $c( (-\epsilon_0,0] )$ and $c( [0,\epsilon_1) )$ are DC graphical, and hence so is the whole $c( (-\epsilon_0,\epsilon_1) )$, cf.\ \cite[Lemma 4.8]{Vesely1989}.
    Thus, for any singular point $p$ of $d_N$ which is not an endpoint (i.e., not admitting a unique sector, cf.\ \cite{Sabau2016}), there exists a one-dimensional DC submanifold in
    $\Sigma(d_N)$ containing $p$ as an interior point.
    This roughly means that a non-isolated singular point $p\in\Sigma(N)$ admits a one-sided DC graphical propagation if and only if $p$ is an endpoint; otherwise two-sided.
\end{remark}

\if0

\subsection{Endpoints}\label{subsec:endpoints}
In this subsection, we will prove the existence of a DC Jordan arc in $\Sigma(d_N)$ emanating from an endpoint. Suppose that the point $p$
 is an endpoint  and an element of $\Sigma(d_N)$.
Let $\gamma_i,i=1,2,$ denote the two $N$-segments which form the boundary of the unique sector
$\textbf{\textup{Sec}}(\gamma_1,\gamma_2)$ at $p$ together with the subarc of 
$S_{\delta_0}(p):=\{ q\in M |d(p,q)=\delta_0\}$ cut off 
by $\gamma_1$ and $\gamma_2.$
Recall that $\textbf{\textup{Sec}}(\gamma_1,\gamma_2)$ is the unique connected component of 
$B_{\delta_0}(p)
\setminus 
\bigcup_{\gamma\in \Gamma_p}\gamma([0,d_N(p)]).$
Note that $\gamma_1$ and $\gamma_2$ are distinct, since $p\in\Sigma(d_N).$
Since $B_{\delta_0}(p)$ is strongly convex, $\gamma_i$ intersects the circle
$S_{\delta_0}(p)$ at a unique point $q_i$ for each $i$; indeed, if $t_1$ is the first time such that $\gamma_i(t)\in S_{\delta_0}(p)$ then $\gamma_i((t_1,d_N(p)))\subset B_{\delta_0}(p)$ so $\gamma_i((t_1,d_N(p)))\cap S_{\delta_0}(p)=\emptyset$.
Let $(q_1,q_2)$ denote the subarc of
$S_{\delta_0}(p)$
with endpoints $q_1$ and $q_2$ such that it forms the boundary of $\textbf{\textup{Sec}}(\gamma_1,\gamma_2)$ together with $\gamma_1$ and $\gamma_2$.
By Lemma \ref{lem:closedneighborhood}, without loss of generality, we may assume that all $N$-segments of $\Gamma_p$ have different initial points from each other.
Therefore, we can choose sufficiently short subarcs $\tilde N_1\ni q_1$ and $\tilde N_2\ni q_2$ of $(q_1,q_2)$, 
so that the two subsets $N_1:=\pi_N(\tilde N_1)$
and $N_2:=\pi_N(\tilde N_2)$ of $N$ are disjoint.

\begin{lemma}\label{lem:unique_N_i-segment_endpoints}
For each $i,$ $\gamma_i$ is a unique $N_i$-segment to $p.$
\end{lemma}

\begin{proof}
Let  $\alpha$ be any  $N_i$-segment.
Since $\gamma_i$ is an $N_i$-segment, the length of $\alpha$ is equal to that of $\gamma_i.$
Thus,
$\alpha$ is also an $N$-segment emanating from a point of $N_i.$ Since there is no $N$-segment to $p$ passing through  a point of $\textbf{\textup{Sec}}(\gamma_1,\gamma_2),$ $\alpha$ must be  $\gamma_i.$
\end{proof}

\begin{proposition}\label{prop:DCcurve_endpoint}
 There exists a DC curve $c:[-\epsilon_0,\epsilon_0]\to M$ satisfying
$c(0)=p,$ $c([0,\epsilon_0])\subset \Sigma(d_N).$
\end{proposition}
\begin{proof}
By Lemma \ref{lem:unique_N_i-segment_endpoints},
for each $i,$
\begin{equation*}
\partial^*d_{N_i}(p)=\{g_{v_i}(v_i,\cdot)\},
\end{equation*}
where $v_i:=\gamma'_i(d_N(p)).$
By Lemmas \ref{lem:well_defined_definition_DC}
and 
\ref{lem:semiconvexity}, $d_{N_1}$ and $d_{N_2}$ are locally DC on $M.$

For any sequence $\{\alpha_i:[0,L_i]\to M\}_i$
of $N$-segments satisfying $\lim_{i\to\infty}\alpha_i(L_i)=p$ and $\alpha_i(L_i)\in\textbf{\textup{Sec}}(\gamma_1,\gamma_2)$ for each $i,$
any limit $N$-segment of the sequence is an $N$-segment to $p,$ which is $\gamma_1$ or $\gamma_2.$
Thus, there exists an open ball $B_{\delta_1}(p),$ where $\delta_1\in(0,\delta_0)$
such that $d_N(q)=\min\{d_{N_1}(q),d_{N_2}(q)\}$
holds for any 
 $q\in B_{\delta_1}(p)\cap\textbf{\textup{Sec}}(\gamma_1,\gamma_2).$ 
From Theorem 
\ref{thm:DC_IFT}
it follows that there exists an open neighborhood $U$ of $p$
such that $\{q\in U \mid d_{N_1}(q)=d_{N_2}(q)\}\ni p$
is a one-dimensional DC submanifold of $M.$
Without loss of generality,
 we may assume that  $U=B_{\delta_1}(p).$
 Hence, 
$S_1:=\{q\in B_{\delta_1}(p)| d_{N_1}(q)=d_{N_2}(q)\}$ is a DC Jordan arc. 
Therefore,  
there exists a DC Jordan arc $c:[-\epsilon_0,\epsilon_0]\to M$ such that
$c(0)=p,$ and $c([0,\epsilon_0])\subset \Sigma(d_N).$
\end{proof}

In the proof of Proposition \ref{prop:DCcurve_endpoint},
a property on the sector was proved. Since it will be applied  again in Subsections \ref{susec:3sectors} and \ref{subsec:morethan3sectors}, we state it as a lemma.
\begin{lemma}\label{lem:sector_property}
Suppose that $\gamma_1$ and $\gamma_2$ are $N$-segments to $p$ which determine the sector $\textbf{\textup{Sec}}(\gamma_1,\gamma_2).$ Then, there exists a number $\delta_1\in(0,\delta_0)$ such that 
$d_N(q)=\min\{d_{N_1}(q),d_{N_2}(q)\}$ holds on
$\textbf{\textup{Sec}}(\gamma_1,\gamma_2)\cap B_{\delta_1}(p).$
\end{lemma}

\subsection{Cut points admitting adjacent sectors.}\label{susec:3sectors}
Suppose that the cut point $p$ has adjacent sectors. For example, if $p$ admits exactly three $N$-segments, any two sectors at $p$ are adjacent.
Choose three $N$-segments $\gamma_i,i=1,2,3,$ to $p$  such that the sector $\textbf{\textup{Sec}}(\gamma_1,\gamma_2),$ determined by  $\gamma_1$ and $\gamma_3$ is adjacent to the sector $\textbf{\textup{Sec}}(\gamma_2,\gamma_3).$ 
Let $q_i$ denote the intersection of $\gamma_i$ and the circle $S_{\delta_0}(p).$
Let $(q_1,q_2), (q_2,q_3)$ denote the subarcs of $S_{\delta_0}(p)$ cut off by the pair  $\gamma_1$ and $\gamma_2$ and the pair $\gamma_2$ and $\gamma_3$ respectively as before.
Choose short subarcs $\tilde N_1\subset(q_1,q_2),$ and  $\tilde N_3\subset (q_2,q_3)$  as before, and  chose a short subarc $\tilde N_2$  of $S_{\delta_0}(p)$ containing $q_2$ as an interior point of it.
If they are sufficiently short, we may assume that
the subsets $N_i:=\pi_N(\tilde N_i)$ of $N$ are mutually disjoint and that $\gamma_2$ is a unique $N$-segment to $p$ emanating from a point of $N_2.$
\begin{lemma}\label{lem:unique_N_i-segment_3sec}
For each $i,$ $\gamma_i$ is a unique $N_i$-segment to $p.$
\end{lemma}
\begin{proof}
Since
the proof is the same  as that of Lemma \ref{lem:unique_N_i-segment_endpoints} when $i=1\text{ or } 3,$
it is sufficient to prove our lemma in the case
where  $i=2.$
Let $\alpha$ be any $N_2$-segment to $p.$
Since 
$\gamma_2$ is an $N_2$-segment to $p,$
$\alpha$ is  also an  $N$-segment to $p.$
Since  
$\alpha$ is $N$-segment emanating from a point of $N_2$ and since $\gamma_2$ is a unique $N$-segment to $p$ emanating from a point of $N_2,$
  $\alpha$ must be $\gamma_2.$ 
\end{proof}
\begin{lemma}\label{lem:delta(gamma_1,gamma_3)}
There exists a $\delta_1\in(0,\delta_0)$ such
 that 
 \begin{itemize}
 \item[(1)] $d_{N_1\cup N_3}<d_{N_2}$ on $B_{\delta_1}(p)\cap \Delta(\gamma_1,\gamma_3),$
where $\Delta(\gamma_1,\gamma_3)$ denotes the connected component of $B_{\delta_0}(p)
\setminus(\gamma_1\cup\gamma_3)$ disjoint from $\gamma_2.$
\item[(2)]
$d_{N_1}<d_{N_3}$ on $\textbf{\textup{Sec}}(\gamma_1,\gamma_2)\cap B_{\delta_1}(p).$
\item[(3)]
$d_{N_3}<d_{N_1}$ on $\textbf{\textup{Sec}}(\gamma_2,\gamma_3)\cap B_{\delta_1}(p).$
\end{itemize}
\end{lemma}
\begin{proof}
We will prove (1) first.
Let $\{q_i\}_i$ denote any sequence of points of $\Delta(\gamma_1,\gamma_3)$ convergent to $p.$
For each $i,$  choose any $N_2$-segment to $q_i.$
By Lemma \ref{lem:unique_N_i-segment_3sec},
we get $\lim_{i\to\infty}\alpha_i=\gamma_2.$
Thus, there exists $\delta_1\in(0,\delta_0)$ such that any $q\in B_{\delta_1}(p)\cap\Delta(\gamma_1,\gamma_3)$ and any $N_2$-segment $\alpha$ to $q$
intersects $\gamma_1$ or $\gamma_3$ at a unique point of $B_{\delta_0}(p).$
We will prove that 
$d_{N_2}(q)>\min\{d_{N_1}(q), d_{N_3}(q)\}=d_{N_1\cup N_2}(q)$ for any $q\in B_{\delta_1}(p)\cap\Delta(\gamma_1,\gamma_3).$
Choose any $q\in B_{\delta_1}(p)\cap\Delta(\gamma_1,\gamma_3)$ and any $N_2$-segment
$\alpha$ to $q.$
The geodesic $\alpha$ passes through a point of the connected component of $B_{\delta_0}(p)\setminus(\gamma_1\cup\gamma_3  )$ disjoint from $\Delta(\gamma_1,\gamma_3)$ and it passes through a point of $\Delta(\gamma_1,\gamma_3).$ Thus, $\alpha$ intersects $\gamma_1\cup\gamma_3$ at a unique point $y.$
Without loss of egnerality, we may assume that
$y$ is  a point on $\gamma_3.$ 
Since $\gamma_3$ is an $N$-segment and 
$\alpha$ is a geodesic emanating from a point of $N_2\subset N,$ 
we get $d(\alpha(0),y)\geq d(\gamma_3(0),y).$
Hence, $d_{N_2}(q)=d_{N_2}(y)+d(y,q)\geq d(\gamma_3(0),y)+d(y,q),$ and
by the triangle inequality,
$d_{N_2}(q)>d(\gamma_3(0),q)=d_{N_3}(q).$
By the same method used above, we can prove 
(2) and (3).
\end{proof}

\begin{proposition}\label{prop:adjacent_sector}
If the cut point $p$ admits  two sectors
$\textbf{\textup{Sec}}(\gamma_1,\gamma_2)$ and $\textbf{\textup{Sec}}(\gamma_2,\gamma_3)$ which are adjacent, then
there exists a one-dimensional DC submanifold $S_1\ni p$ in $\Sigma(d_N)$ lying in $\textbf{\textup{Sec}}(\gamma_1,\gamma_2)\cup \textbf{\textup{Sec}}(\gamma_2,\gamma_3).$
\end{proposition}
\begin{proof}
From Lemma \ref{lem:unique_N_i-segment_3sec} it follows that 
\begin{equation*}
\partial^*d_{N_2}(p)=\{g_{v_2}(v_2,\cdot)\},
\end{equation*}
and
\begin{equation*}
\partial^*d_{N_1\cup N_3}(p)=\{\lambda g_{v_1}(v_1,\cdot)+(1-\lambda)g_{v_3}(v_3,\cdot)| \lambda\in[0,1]\},
\end{equation*}
where $v_i:=\gamma_i'(d_N(p)).$
Since $g_{v_i}(v_i,\cdot),i=1,2,3$ are not colinear, each element of $\partial^*d_{N_2}(p)\times \partial^*d_{N_1\cup N_3}(p)$ is linearly independent.
Thus, 
by Theorem \ref{thm:DC_IFT}
there exists an open neighborhood $U$ 
of $p$
such that 
$
S_1:=\{q\in U| d_{N_2}(q)=d_{N_1\cup N_3}(q)\}\ni p$
is a one-dimensional submanifold of $M.$
Without loss of generality, we may assume that
$U=B_{\delta_1}(p),$ where $\delta_1$ is a number guaranteed in Lemmas \ref{lem:sector_property} and   \ref{lem:delta(gamma_1,gamma_3)}.
It is clear from Lemmas \ref{lem:sector_property} and \ref{lem:delta(gamma_1,gamma_3)}
that $S_1\subset \Sigma(d_N)$ and 
$S_1\subset \textbf{\textup{Sec}}(\gamma_1,\gamma_2)\cup 
\textbf{\textup{Sec}}(\gamma_2,\gamma_3)\cup\{p\}$

\end{proof}

\subsection{Cut points admitting more than three sectors.}
\label{subsec:morethan3sectors}
Suppose that the cut point $p$ has more than three sectors. Hence, we can choose   two sectors 
$\textbf{\textup{Sec}}(\gamma_1,\gamma_2),$ $ \textbf{\textup{Sec}}(\gamma_3,\gamma_4)$ at $p$ which are not adjacent.
Here $\gamma_i, i=1,2,3,4$ denote $N$-segments to $p$. Clearly, they are mutually distinct.
Let $q_i$  denote the unique intersection of $\gamma_i$ and $S_{\delta_0}(p)$ for each $i.$
{\it The four $N$-segments are defined so as to satisfy that the four points $q_1,q_2,q_3,q_4$ are placed in this order on the circle $S_{\delta_0}(p).$  } 

Thus, the subarc $(q_1,q_2)$ (respectively $(q_3,q_4))$ of $S_{\delta_0}(p)$ forms the boundary of the sector $\textbf{\textup{Sec}}(\gamma_1,\gamma_2)$ (respectively $\textbf{\textup{Sec}}(\gamma_3,\gamma_4)$) together with $\gamma_1$ and $\gamma_2$ (respectively $\gamma_3$ and $\gamma_4$)
As in Subsection \ref{subsec:endpoints},
we define subarcs $\tilde N_1,\tilde N_2$ of 
$(q_1,q_2)$ and subarcs $\tilde N_3,\tilde N_4$ of 
$(q_3,q_4)$  such that $N_i=\pi_N(\tilde N_i)$ are mutually disjoint.
The proof of the following lemma is the same as that of Lemma 
\ref{lem:unique_N_i-segment_endpoints}.
\begin{lemma}\label{lem:unigue_N_i-segment_Sec>3}
For each $i,$ $\gamma_i$ is a unique $N_i$-segment to $p.$
\end{lemma}

\begin{lemma}
\label{lem:delta(gamma_2,gamma_3)_delta(gamma_1,gamma_4)}
There exists a $\delta_1\in(0,\delta_0)$ such that 
\begin{itemize}
\item[(1)]
$d_{N_1\cup N_4}<d_{N_2\cup N_3}$ on $B_{\delta_1}(p)\cap \Delta(\gamma_1,\gamma_4),$
where $\Delta(\gamma_1,\gamma_4)$ denotes the connected component of $B_{\delta_0}(p)
\setminus(\gamma_1\cup\gamma_4)$ disjoint from $\gamma_2\cup\gamma_3.$
\item[(2)]
$d_{N_2\cup N_3}<d_{N_1\cup N_4}$ on $B_{\delta_1}(p)\cap \Delta(\gamma_2,\gamma_3),$
where $\Delta(\gamma_2,\gamma_3)$ denotes the connected component of $B_{\delta_0}(p)
\setminus(\gamma_2\cup\gamma_3)$ disjoint from $\gamma_1\cup\gamma_4.$
\end{itemize}
\end{lemma}
\begin{proof}
The proofs of (1) and (2) are the same as that of Lemma \ref{lem:delta(gamma_1,gamma_3)}.

\end{proof}

\begin{lemma}\label{lem:omega_1,...,omega_4}
The convex set 
$$\conv\{\omega_1-\omega_2,\omega_1-\omega_3,\omega_4-\omega_2,\omega_4-\omega_3\}$$
does not contain zero.
Here $\omega_i(\cdot):=g_{v_i}(v_i,\cdot), v_i:=\gamma_i'(d_N(p))$ for each $i.$
\end{lemma}
\begin{proof}
Suppose that 
$$0\in\conv\{\omega_1-\omega_2,\omega_1-\omega_3,\omega_4-\omega_2,\omega_4-\omega_3\}.$$
This
means that there exist $a_i\in[0,1],i=1,...,4$ with $\Sigma_{i=1}^4a_i=1$
satisfying
$$a_1(\omega_1-\omega_2)+a_2(\omega_1-\omega_3)+a_3
(\omega_4-\omega_2)+a_4(\omega_4-\omega_3)=0.$$
If we set $\lambda:=a_1+a_2\geq 0$ and $\mu:=a_1+a_3\geq 0,$ we have
$\lambda\omega_1+(1-\lambda)\omega_4=\mu\omega_2+(1-\mu)\omega_3.$
This equation means that both line segments
have a common point. Note that $S_p^*(p):=\{\omega\in T_p^*M| F^*(\omega)=1\}$ is convex. Here
$T^*_pM$ denotes the dual space of the tangent space $T_pM$ of $M$ at $p$  and $F^*$ denotes the dual norm of the Finsler metric of $M.$
(cf. \cite[Lemmas 3.1.1, 3.1.2]{Shen2001}).
Therefore, $\omega_2$ and $\omega_3$ belong to   mutually distinct connected component of 
$S_p^*M\setminus\{\omega_1,\omega_4\}.$

This property is preserved by the Legendre transformation. In fact, the transformation gives a diffeomorphism between $S_pM:=\{v\in T_pM|F(v)=1\}$ and $S^*_pM.$ 
Thus, $v_2$ and $v_3$ belong to   mutually distinct connected component of 
$S_pM\setminus\{v_1,v_4\}.$ 
This  contradicts the order of $v_1,v_2,v_3,v_4.$
Note here that $S_pM$ is diffeomorphic to $S_{\delta_0}(p).$
\end{proof}
\begin{lemma}\label{lem:noncritical_p}
The point $p$ is not critical for the function
$f:=d_{N_1\cup N_4}-d_{N_2\cup N_3}$, i.e., $0\notin \partial^*f(p).$
\end{lemma}

\begin{proof}
By Lemma \ref{lem:unigue_N_i-segment_Sec>3},
$\partial^*f(p)\subset \conv\{\omega_1-\omega_2,\omega_1-\omega_3,\omega_4-\omega_2,\omega_4-\omega_3\}.$
Now it is clear from Lemma \ref{lem:noncritical_p} that $p$ is not critical for $f.$
\end{proof}

\begin{proposition}\label{prop:nonadjacent_sectors}
If the cut point $p$ admits  two sectors
$\textbf{\textup{Sec}}(\gamma_1,\gamma_2)$ and $\textbf{\textup{Sec}}(\gamma_3,\gamma_4)$ which are not adjacent, then
there exists a one-dimensional DC submanifold $S_1\ni p$ in $\Sigma(d_N)$ lying in $\textbf{\textup{Sec}}(\gamma_1,\gamma_2)\cup \textbf{\textup{Sec}}(\gamma_3,\gamma_4).$
\end{proposition}
\begin{proof}
From Theorem \ref{thm:DC_IFT}, Lemmas \ref{lem:semiconvexity} and \ref{lem:noncritical_p} it follows  that
there exists an open neighborhood $U$ of $p$ such that 
$$S_1:=\{q\in U | d_{N_1\cup N_4}(q)=d_{N_2\cup N_3}(q)\}$$
is a one-dimensional DC submanifold of $M.$
We may assume that $U=B_{\delta_1}(p),$
where $\delta_1$ is a number satisfying (1), (2) in Lemma \ref{lem:delta(gamma_2,gamma_3)_delta(gamma_1,gamma_4)} and guaranteed in Lemma \ref{lem:sector_property}.
Then, by Lemma \ref{lem:delta(gamma_2,gamma_3)_delta(gamma_1,gamma_4)}, we get $S_1\subset \Sigma(d_N).$ 
\end{proof}

\emph{Now we get the proof of Theorem \ref{thm:2D_singular_set} by combining
Theorems  \ref{thm:main_singular_set} and \ref{thm:Finslar_2dim_Sabau_Tanaka}, and Propositions \ref
{prop:DCcurve_endpoint}, \ref{prop:adjacent_sector} and \ref{prop:nonadjacent_sectors}.}

\fi

\appendix 

\section{Examples of singular sets of convex functions}

Here we construct some examples of convex functions with special singular sets.
Although the existence of such examples also follows from the general result of Pavlica \cite{Pavlica2004}, here we provide rather simple and direct proofs.

\subsection{Fractional singular set}\label{subsec:counterexample}

In this subsection we construct a concave, or equivalently convex function $u:\mathbf{R}^m\to\mathbf{R}$ whose singular set $\Sigma(u)$ has Hausdorff dimension $\dim_\mathcal{H}\Sigma(u)=m-1-s$, for any given $s\in(0,1)$.

To this end it is sufficient to consider the case of $m=2$.
Indeed, once we obtain such a function $u:\mathbf{R}^2\to\mathbf{R}$ with $\dim_\mathcal{H}\Sigma(u)=1-s$, then the function $\tilde{u}(x^1,x^2,\dots,x^m):=u(x^1,x^2)$ defined on $\mathbf{R}^m$ is also convex and has the property that $\Sigma(\tilde{u})=\Sigma(u)\times\mathbf{R}^{m-2}$ and hence $\dim_\mathcal{H}\Sigma(\tilde{u})=(1-s)+(m-2)=m-1-s$.

Before constructing a concrete example we state a general result on the attainability of the singular set of a convex function.

\begin{proposition}\label{prop:counterexample}
  For any sequence $\{I_j\}_{j\geq1}$ of mutually positively-separated open subintervals of $\R$ such that $I_j\subset[0,1]$, there exists a convex function $u:\mathbf{R}^2\to\mathbf{R}$ such that $\Sigma(u)=C\times\{0\}$, where $C:=[0,1]\setminus\bigcup_{j=1}^\infty I_j$.
\end{proposition}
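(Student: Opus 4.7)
The plan is to construct $u$ explicitly by a local smoothing of the baseline model $u_0(x,y)=|y|$, performed independently over each gap $I_j$. The positive separation of the $\{I_j\}$ is what makes this feasible: it allows one to choose $\delta_j>0$ so small that the strips $S_j:=[a_j,b_j]\times[-\delta_j,\delta_j]$ are pairwise disjoint, and then to modify $u_0$ only inside $\bigcup_j S_j$. Outside $\bigcup_j S_j$ one simply sets $u=|y|$, which matches any correctly-normalised local modification on the boundary.

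Inside each $S_j$, the function $u_j$ must satisfy four properties: (i) $u_j=|y|$ on the lateral sides $\{a_j,b_j\}\times[-\delta_j,\delta_j]$ and on the top and bottom $[a_j,b_j]\times\{\pm\delta_j\}$, ensuring matching with the exterior; (ii) $u_j$ is smooth on $(a_j,b_j)\times\{0\}$, so that the interior of the gap contributes no singular points; (iii) $u_j$ retains the $|y|$-type kink at the endpoints $(a_j,0),(b_j,0)\in C\times\{0\}$; and (iv) $u_j$ is convex and $C^1$-matches $|y|$ across $\partial S_j$, so that the globally-defined function $u$ is convex. A natural candidate is the $x$-parametrised Moreau envelope
\[
  u_j(x,y) := \begin{cases} y^2/(2h_j(x)) & \text{if $|y|\leq h_j(x)$,}\\ |y|-h_j(x)/2 & \text{if $|y|\geq h_j(x)$,}\end{cases}
\]
for a nonnegative profile $h_j$ supported on $\overline{I_j}$ with $\sup h_j \leq \delta_j$. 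A direct Hessian computation in each branch, together with $C^1$-matching across the interface $|y|=h_j(x)$, shows that $u_j$ is convex precisely when $h_j''\leq 0$ pointwise on $\{h_j>0\}$; assuming this, the identification of the singular set is immediate, since on $\{h_j>0\}\times\{0\}$ the formula $u=y^2/(2h_j(x))$ is smooth, while on $\{h_j=0\}\times\{0\}$ the formula $u=|y|$ produces the desired kink.

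The principal obstacle is to construct a profile $h=\sum_j h_j$ compatible with pointwise concavity and smooth vanishing at each $\partial I_j$. A short computation shows that any smooth concave nonnegative $h_j$ with $h_j(\partial I_j)=0$ and $h_j>0$ on $I_j$ must have a strictly positive one-sided derivative at the endpoints, which upon extension by zero outside $I_j$ produces a positive distributional contribution to $h_j''$ at $\partial I_j$ and thereby destroys convexity of the Moreau envelope. To circumvent this I anticipate replacing the Moreau envelope by a subtler smoothing---for instance $\sqrt{y^2+h(x)^2}-h(x)$ combined with a carefully chosen $h$ satisfying an appropriate log-convexity-type condition, or a piecewise construction patching $|y|$ to convex paraboloids at multiple length scales, with explicit verification of convexity across each transition. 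Once a valid local modification $u_j$ has been exhibited, the global verification that $u$ is convex and that $\Sigma(u)=C\times\{0\}$ is routine, and the existence statement of the proposition follows.
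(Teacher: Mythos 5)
Your diagnosis of the obstruction is correct, and you deserve credit for identifying it explicitly; but your argument stops precisely there. The closing sentence ``Once a valid local modification $u_j$ has been exhibited, the global verification \ldots\ is routine'' is not a proof --- the existence of such a $u_j$ \emph{is} the proposition, and the paragraph preceding it is an honest admission that the Moreau-envelope candidate fails and that you have only a vague plan (``an appropriate log-convexity-type condition'', ``a piecewise construction \ldots\ with explicit verification'') for replacing it. That is a genuine gap, not a detail.

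What is missing is the following idea, which is the crux of the paper's construction: do not take $|y|$ as the baseline, but $v_0(x,y)=\tfrac12 x^2+|y|$. The parabolic term contributes a constant $+1$ to $\partial_{xx}$, which frees the profile from the concavity constraint you ran into. Concretely, with the perspective form $v_j=v_{j-1}+\chi_{I_j}(x)\bigl(r_j(x)\phi(y/r_j(x))-|y|\bigr)$, where $\phi$ is a smooth even convex function equal to $|\cdot|$ outside $[-1,1]$ and $r_j(x)=(x-a_j)^2(x-b_j)^2$, a direct computation gives
\[
\partial_{xx}v_j\,\partial_{yy}v_j-(\partial_{xy}v_j)^2
=\frac{\phi''(y/r_j)}{r_j}\Bigl(1+r_j''(x)\bigl[\phi(z)-z\phi'(z)\bigr]\Bigr),\qquad z:=y/r_j(x),
\]
so positive semidefiniteness only requires $1+r_j''\,[\phi(z)-z\phi'(z)]\ge0$. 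The quartic has $r_j''\ge -(b_j-a_j)^2\ge-1$ and $0\le\phi(z)-z\phi'(z)\le1$, so the $+1$ slack is exactly enough; crucially, $r_j$ is allowed (indeed forced) to be \emph{convex} near $\partial I_j$, and it vanishes to second order there, so extending by zero produces no distributional atoms in $r_j'$ and the piecewise-defined $v_j$ is globally convex. Without the $\tfrac12 x^2$ term one is locked into $h_j''\le0$, and your computation correctly shows that has no nonnegative solution vanishing at both endpoints of $I_j$; so the alternative smoothings you gesture at ($\sqrt{y^2+h^2}-h$ requires $h''\ge0$, which is equally impossible for a nonnegative $h$ vanishing at both endpoints and positive in between) would not rescue the $|y|$-baseline approach either. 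One further point: the restriction to a neighbourhood of $[0,1]\times\{0\}$ and the final passage to a function with $\Sigma(u)=C\times\{0\}$ (rather than $(\R\setminus\bigcup I_j)\times\{0\}$) also needs a small explicit modification on $(-\infty,0)$ and $(1,\infty)$, which the paper handles by two more applications of the same local construction.
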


\begin{proof}[Proof of Proposition \ref{prop:counterexample}]
  Given a sequence $\{I_j\}_{j\geq1}$, we define a sequence $\{v_j\}_{j\geq0}$ of functions in the following way.
  Let $v_0(x,y):=\tfrac{1}{2}x^2+|y|$.
  Let $\phi:\mathbf{R}\to\mathbf{R}$ be a smooth even convex function such that $\phi(y)=|y|$ for $|y|\geq1$; then automatically $\phi>0$ and $\phi(0)\in(0,1)$.
  In addition, writing $I_j=(a_j,b_j)$, we define $r_j:I_j\to\mathbf{R}$ by $r_j(x):=(x-a_j)^2(x-b_j)^2$.
  We then inductively define
  $$v_j(x,y):=v_{j-1}(x,y) + \chi_{I_j}(x)\Big( r_j(x)\phi(\tfrac{y}{r_j(x)})-|y| \Big),$$
  where $\chi_A$ denotes the characteristic function on $A$.
  Notice that $v_j$ is continuous and $v_j\geq v_{j-1}$ holds on $\mathbf{R}^2$ for all $j$.

  We prove that each member $v_j$ is convex by induction.
  For $j=0$ it is trivial.
  Suppose that $v_{j-1}$ is convex and prove that $v_j$ is convex.
  Note first that $v_j=v_{j-1}$ holds on $\mathbf{R}^2\setminus D_j$, where $D_j:=\{(x,y)\mid x\in I_j,\ |y|< r_j(x)\}$, and hence $v_j$ is locally convex on $\mathbf{R}^2\setminus\overline{D_j}$.
  Now it suffices to prove the local convexity of $v_j$ on $E_j:=\overline{D_j}\cap U_j$ with $U_j:=I_j\times\mathbf{R}$; indeed, then $v_j\in C(\mathbf{R}^2)$ is locally convex except at the two points $(\overline{I_j}\setminus I_j)\times\{0\}$ and hence $v_j$ is entirely convex (by approximation of segments).
  For the local convexity on $E_j$ it suffices to show that the Hessian is positive semidefinite on $U_j$, or equivalently that $\mathrm{tr}(D^2v_j)\geq0$ and $\det(D^2v_j)\geq0$ on $U_j$.
  Now the problem is reduced to showing that all $\partial_{xx}v_j$, $\partial_{yy}v_j$, and $\partial_{xx}v_j\partial_{yy}v_j-(\partial_{xy}v_j)^2$ are nonnegative on $U_j$.
  By definition of $v_0,\dots,v_j$ and by the fact that $I_1,\dots,I_j$ are mutually disjoint, the restriction of $v_j$ to $U_j$ is represented by
  $$v_j|_{U_j}(x,y) = \tfrac{1}{2}x^2 + r_j(x)\phi(\tfrac{y}{r_j(x)}).$$
  This implies that $\partial_{yy}v_j|_{U_j}=\tfrac{1}{r_j(x)}\phi''(\tfrac{y}{r_j(x)})\geq0$ since $\phi''\geq0$ and $r_j>0$.
  In addition,
  $$\partial_{xx}v_j|_{U_j}= 1 + \tfrac{y^2r_j'(x)^2}{r_j(x)^3}\phi''(\tfrac{y}{r_j(x)}) + r_j''(x)[\phi(\tfrac{y}{r_j(x)})-\tfrac{y}{r_j(x)}\phi'(\tfrac{y}{r_j(x)})].$$
  The second term is nonnegative, while the third terms is bounded below by $-1$ since $r_j''\geq-(b_j-a_j)^2\geq-1$ and $0\leq \phi(z)-z\phi'(z)\leq 1$ for any $z\in\mathbf{R}$; the last estimate follows since $(\phi(z)-z\phi'(z))'=-z\phi''(z)$ and hence the maximum $\phi(0)\in(0,1)$ is taken at $z=0$ and the minimum $0$ on $|z|\geq1$.
  Therefore, $\partial_{xx}v_j|_{U_j}\geq 0$.
  Finally, we compute
  $$\partial_{xx}v_j\partial_{yy}v_j-(\partial_{xy}v_j)^2 = \tfrac{1}{r_j(x)}\phi''(\tfrac{y}{r_j(x)})\Big( 1+r_j''(x)[ \phi(\tfrac{y}{r_j(x)})-\tfrac{y}{r_j(x)}\phi'(\tfrac{y}{r_j(x)})] \Big)\geq0.$$
  Therefore $v_j$ is also locally convex on $E_j$, and hence $v_j$ is convex on $\mathbf{R}^2$.

  Now we define
  \[
  v_\infty:=\sup_{j\geq0}v_j\ (\leq v_0+1).
  \]
  Then $v_\infty$ is convex since it is the supremum of a sequence of convex functions.
  Let $J:=\bigcup_{j=1}^\infty I_j$.
  We confirm that
  \[
  \Sigma(v_\infty)=(\mathbf{R}\setminus J)\times\{0\}.
  \]
  We first note that this $v_\infty$ is not differentiable on $(\mathbf{R}\setminus J)\times\{0\}$ since if $x_0\not\in J$, then $v_\infty(x_0,y)=v_0(x_0,y)=\frac{1}{2}x_0^2+|y|$ for any $y$ and hence not differentiable at $y=0$.
  In what follows we argue that $v_\infty$ is differentiable outside $(\mathbf{R}\setminus J)\times\{0\}$.
  Since the completement of $(\mathbf{R}\setminus J)\times\{0\}$ is the union of $J\times\R$ and $(\R\setminus J)\times(\R\setminus\{0\})$, it is sufficient to prove differentiability on both the sets.
  Concerning the former set, for any $j$ the function $v_\infty$ is differentiable on the open set $I_j\times\R$ since we have $v_\infty(x,y)=\frac{1}{2}x^2+r_j(x)\phi(\tfrac{y}{r_j(x)})$, and hence taking the union with respect to $j$ implies that $v_\infty$ is differentiable on $J\times\R$.
  Concerning the latter, for any point $p_0=(x_0,y_0)\in(\R\setminus J)\times(\R\setminus\{0\})$, there is $\varepsilon\in(0,|y_0|)$ such that $B_\varepsilon(p_0)\cap \bigcup_{j=1}^\infty D_j=\emptyset$; this follows by the fact that the set $\bigcup_{j=1}^\infty D_j$ is of the form $\{|y|<f(x)\}$ for a nonnegative continuous function $f:\R\to[0,\infty)$ such that $\{f=0\}=\R\setminus J$.
  Then $v_\infty=v_0=\frac{1}{2}x^2\pm y$ holds on $B_\varepsilon(p_0)$,
  where $\pm$ depends on the sign of $y_0$, so that $v_\infty$ is also differentiable at $p_0$.
  This implies the desired differentiability of $v_\infty$.

  Finally, we define a convex function $u:\mathbf{R}^2\to\mathbf{R}$ such that $\Sigma(u)=C\times\{0\}$ with $C:=[0,1]\setminus J$, by using $v_\infty$.
  This is easily done by letting $\rho_0(x):=x^2$ and $\rho_1(x):=(x-1)^2$, and then defining $u$ by
  $$u := v_\infty + \chi_{(-\infty,0)}(x)\big( \rho_0(x)\phi(\tfrac{y}{\rho_0(x)})-|y| \big) + \chi_{(1,\infty)}(x)\big( \rho_1(x)\phi(\tfrac{y}{\rho_1(x)})-|y| \big).$$
  The convexity of $u$ and the fact that
  \[
  \Sigma(u)=C\times\{0\}
  \]
  can be confirmed by parallel (or easier) arguments to the above.
\end{proof}

Now we complete our construction:

\begin{example}[Cantor-like singular set]\label{ex:cantor}
  For $\sigma\in(0,1)$ we define the (standard) generalized Cantor set $C_\sigma\subset[0,1]$ by iteratively deleting (at step $j$) the middle open interval of length $\sigma L_{j-1}$ from each of the remaining $2^{j-1}$ segments of length $L_{j-1}=2^{-(j-1)}(1-\sigma)^{j-1}$.
  It is well known (e.g.\ due to self-similarity) that $\dim_\mathcal{H}C_\sigma=(\log{2})(\log\frac{2}{1-\sigma})^{-1}$.
  Hence, for any given $s\in(0,1)$, by choosing $\sigma:=1-2^{-\frac{s}{1-s}}\in(0,1)$, we can apply Proposition \ref{prop:counterexample} to deduce that there is a convex function $u:\R^2\to\R$ such that $\dim_\mathcal{H}\Sigma(u)=\dim_\mathcal{H}(C_\sigma\times\{0\})=\dim_\mathcal{H}C_\sigma=1-s$.
\end{example}

We finally discuss the reachable gradient $D^*u$ of the convex function $u$ constructed in the proof of Proposition \ref{prop:counterexample}.
After some computations, we deduce that $D^*u(p)=\{(p_1,y)\in\mathbf{R}^2 \mid |y|\leq1\}$ holds for any $p=(p_1,p_2)\in\bd[C]\times\{0\}$, where $\bd[C]$ denotes the topological boundary of the closed set $C$ in $\mathbf{R}$.
In particular, if $C$ has empty interior (like a Cantor set), then $D^*u(p)=\partial u(p)$ for any $p\in\Sigma(u)$.
This implies the fact that no point in $\Sigma(u)$ satisfies Albano--Cannarsa's condition \eqref{eq:AlbanoCannarsa}, which is consistent with the fact that no propagation occurs in a Cantor-like singular set.

\begin{remark}
    There are some results with the following type of assertion: If $\Sigma(N)=\textrm{Max}(N)$ holds, where $\textrm{Max}(N):=\{p\in M\setminus N \mid d_N(p)=\max d_N \}$, then the singular set $\Sigma(N)$ needs to be a submanifold, cf.\ \cite[Theorem 6]{Crasta2016}, \cite[Lemma 5.1]{Innami2019}.
    A natural question is if such a property extends to a more general class including distance functions.
    Our example constructed above shows that a general class of semi-concave functions contains a counterexample.
    In fact, if we define $U(x):=\frac{1}{2}x^2-u(x)$, where $u$ is constructed in the above proof, then $U$ is semi-concave on $\R^2$ but takes the maximum $0$ exactly on the Cantor singular set $\Sigma(u)=C\times\{0\}$, being not a submanifold.
\end{remark}

\subsection{Zigzag singular set}\label{subsec:zigzag}

Here we construct a convex function $u:\R^2\to\R$ that admits a point $p\in\Sigma(u)$ with the following properties: There is a bi-Lipschitz map $f:[-\varepsilon,\varepsilon]\to\Sigma(u)$ with $|f'|\equiv1$ a.e., but there is no Lipschitz graph (hypersurface) $S\subset\R^2$ such that $p\in S\subset\R^2$.
Roughly speaking, this example has a point at which a Lipschitz propagation occurs but a graphical propagation does not.

We begin with a general abstract statement.
Given two points $p,q\in\R^2$ we define $[p,q]:=\{ tp+(1-t)q \mid t\in[0,1]\}$.
Any set of the form $[p,q]$ is called \emph{segment}.
Note that a segment may be a singleton.

\begin{proposition}\label{prop:zigzag}
  Let $\{K_j\}_{j=1}^\infty$ be a sequence of segments in $\R^2$.
  Suppose that $K:=\bigcup_{j=1}^\infty K_j$ is compact.
  Then there exists a convex function $u:\R^2\to\R$ such that $\Sigma(u)=K$.
\end{proposition}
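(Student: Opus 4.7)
The approach is to take the convex function
\[
  u(x) := \sum_{j=1}^\infty 2^{-j} d_{K_j}(x),
\]
the weighted sum of Euclidean distance functions from each segment $K_j$, and check the three conditions: convergence/convexity, the inclusion $K\subseteq \Sigma(u)$, and the reverse inclusion $\Sigma(u)\subseteq K$. Since each $K_j$ is a (possibly degenerate) segment, it is convex, so $d_{K_j}$ is $1$-Lipschitz and convex. Compactness of $K$ gives a uniform bound $d_{K_j}(x)\le |x|+\mathrm{diam}(K\cup\{0\})$ on $K_j$, so the series converges locally uniformly and yields a continuous convex function $u:\R^2\to\R$.

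For the inclusion $K\subseteq \Sigma(u)$, I would fix $p\in K$ and some $j_0$ with $p\in K_{j_0}$. The convex function $d_{K_{j_0}}$ has a subdifferential at $p$ which is not a singleton in either case: the closed unit disk when $K_{j_0}$ is a point, and a non-degenerate line segment (in the normal direction to $K_{j_0}$) when $K_{j_0}$ is non-degenerate. Choose two distinct elements $v,v'\in\partial d_{K_{j_0}}(p)$ and arbitrary $w_j\in\partial d_{K_j}(p)$ for $j\ne j_0$; since $|w_j|\le 1$, the sum $W:=\sum_{j\ne j_0} 2^{-j} w_j$ converges absolutely. Summing the subgradient inequalities $d_{K_j}(y)\ge d_{K_j}(p)+\langle w_j,y-p\rangle$ weighted by $2^{-j}$ (and with $v$, resp.\ $v'$, at $j=j_0$) shows that both $2^{-j_0}v+W$ and $2^{-j_0}v'+W$ lie in $\partial u(p)$, and they differ by $2^{-j_0}(v-v')\ne 0$. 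Hence $\partial u(p)$ is not a singleton, so $u$ is not differentiable at $p$.

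For the inclusion $\Sigma(u)\subseteq K$, I would use that $K$ is closed and compact: if $p\notin K$ then $\delta_0:=d(p,K)>0$ and hence $d(p,K_j)\ge\delta_0$ for every $j$. Using the classical projection formula $\nabla d_{K_j}(q)=(q-\pi_{K_j}(q))/|q-\pi_{K_j}(q)|$ together with the fact that the metric projection onto a convex set is $1$-Lipschitz, one obtains a uniform Lipschitz bound $\mathrm{Lip}(\nabla d_{K_j};B_{\delta_0/2}(p))\le C/\delta_0$, independent of $j$. A second-order Taylor estimate then gives $|d_{K_j}(p+h)-d_{K_j}(p)-\langle \nabla d_{K_j}(p),h\rangle|\le (C/\delta_0)|h|^2$ for $|h|$ small. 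Multiplying by $2^{-j}$ and summing yields
\[
  u(p+h)-u(p)=\langle V,h\rangle+O(|h|^2),\qquad V:=\sum_{j=1}^\infty 2^{-j}\nabla d_{K_j}(p),
\]
so $u$ is differentiable (in fact $C^{1,1}$) at $p$. Combined with the previous step this gives $\Sigma(u)=K$.

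The main obstacle is the last step. An infinite sum of convex functions each differentiable at a given point need not be differentiable there (the singularities $\Sigma(d_{K_j})$ can accumulate near $p$). The crucial geometric input that makes the argument work is the one provided by compactness of the whole $K$: for $p\notin K$, \emph{all} segments $K_j$ lie at uniform positive distance from $p$, which translates into the uniform $C^{1,1}$ estimate on the family $\{d_{K_j}\}$ in a common neighborhood of $p$.
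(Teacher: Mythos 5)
Your proposal is correct and takes essentially the same approach as the paper: the identical candidate function $u=\sum_j 2^{-j}d_{K_j}$, nondifferentiability on $K$ from the kink contributed by any $K_{j_0}\ni p$, and differentiability off $K$ from the uniform bound on the distance to each $K_j$. The only cosmetic differences are that the paper verifies nondifferentiability by restricting to the normal line rather than exhibiting two distinct subgradients, and shows $C^1$-regularity off $K$ via uniform convergence of $\sum 2^{-j}\nabla d_{K_j}$ rather than your stronger (but equally valid) uniform $C^{1,1}$ Taylor estimate.
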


\begin{proof}
  For each $j$ the distance function $u_j:=d(K_j,\cdot)$ defines a nonnegative convex function such that $\Sigma(u_j)=K_j$.
  We prove that the function defined by
  $$u:=\sum_{j=1}^\infty 2^{-j}u_j$$
  gives a desired convex function.
  By the boundedness of $K$ there are $C_1,C_2>0$, depending only on $K$, such that $u_j(p)\leq C_1|p|+C_2$ holds for any $p\in\R^2$ and $j\geq1$.
  In particular, the series $\sum 2^{-j}u_j$ converges locally uniformly, so the limit function $u$ indeed exists and is continuous, and also convex (as it is the supremum of convex functions).
  Below we prove that $\Sigma(u)=K$.

  We first prove that $\Sigma(u)\subset K$, i.e., the limit function $u$ is differentiable on $\R^2\setminus K$.
  Since $K$ is closed, for any $p\in \R^2\setminus K$ there is $r>0$ such that $p\in B_r(p)\subset \R^2\setminus K$.
  For each $j$, since $K_j\subset K$, the restriction of $2^{-j}u_j$ to $B_r(p)$ is of class $C^1$ and satisfies $|\nabla (2^{-j}u_j)|=2^{-j}$ there.
  Hence the series $\sum 2^{-j}u_j$ converges in the $C^1$-topology locally on $\R^2\setminus K$, so that $u$ is of class $C^1$ on $\R^2\setminus K$.

  Finally we prove that $K\subset \Sigma(u)$.
  Fix any $p\in K$.
  Let $j_0$ be the minimal integer such that $p\in K_{j_0}$.
  For notational simplicity we may assume that $p$ is the origin and $K_{j_0}$ lies in the $x$-axis.
  Then we have $u_{j_0}(0,y)=|y|$.
  Since $u$ is of the form $2^{-j_0}u_{j_0}+v_0$, where $v_0:=\sum_{j\neq j_0} 2^{-j}u_j$ is also convex, we have $u(0,y)=2^{-j_0}|y|+v_0(0,y)$.
  By convexity of $v_0(0,\cdot)$ the function $u(0,\cdot)$ is not differentiable at $y=0$.
  Hence $u$ is also not differentiable at the origin $p$, i.e., $p\in\Sigma(u)$.
\end{proof}

A concrete example is then constructed as follows, cf.\ Figure \ref{fig:zigzag}:

\begin{example}[Zigzag singular set]\label{ex:zigzag}
  Let $p_j:=(\frac{1}{2^j},0)\in\R^2$, $q_j:=(\frac{1}{2^j},\frac{1}{2^j})\in\R^2$, $A_j:=[p_{j+1},q_j]$, and $B_j:=[q_j,p_{j+2}]$.
  Take $K_0=\{(0,0)\}$ and $K_{4j+1}:=A_j$, $K_{4j+2}:=RA_j$, $K_{4j+3}:=B_j$, $K_{4j+4}:=RB_j$ for $j\geq0$, where $R$ denotes the horizontal reflection matrix $(x,y)\mapsto(-x,y)$.
  Then the union $K$ of $\{K_j\}_{j\geq0}$ is compact, so that by Proposition \ref{prop:zigzag} there is a convex function $u:\R^2\to\R$ such that $\Sigma(u)=K$.
  The singular set $K$ is a zigzag line of finite length thanks to self-similarity.
  The arclength parameterization of $K$ gives a bi-Lipschitz map from an interval to $K$.
  However $K$ is so zigzag that any Lipschitz graph passing through the origin cannot be contained in $K$.
  Indeed, the sets $K\cap\{\pm x\geq0\}$ can be represented by graphs only in directions $(\cos\theta,\sin\theta)$ such that $\pm\tan\theta\in(\frac{4}{3},2)$, respectively, so the whole $K$ is not graphical around the origin.
  In addition, thanks to Lemma \ref{lem:submanifold_coordinate}, the current example is also not a Lipschitz submanifold in the Euclidean plane $\R^2$ even in the sense of Definition \ref{def:Lipsubmfd_Finsler}.
\end{example}

\begin{center}
    \begin{figure}[htbp]
      \includegraphics[width=200pt]{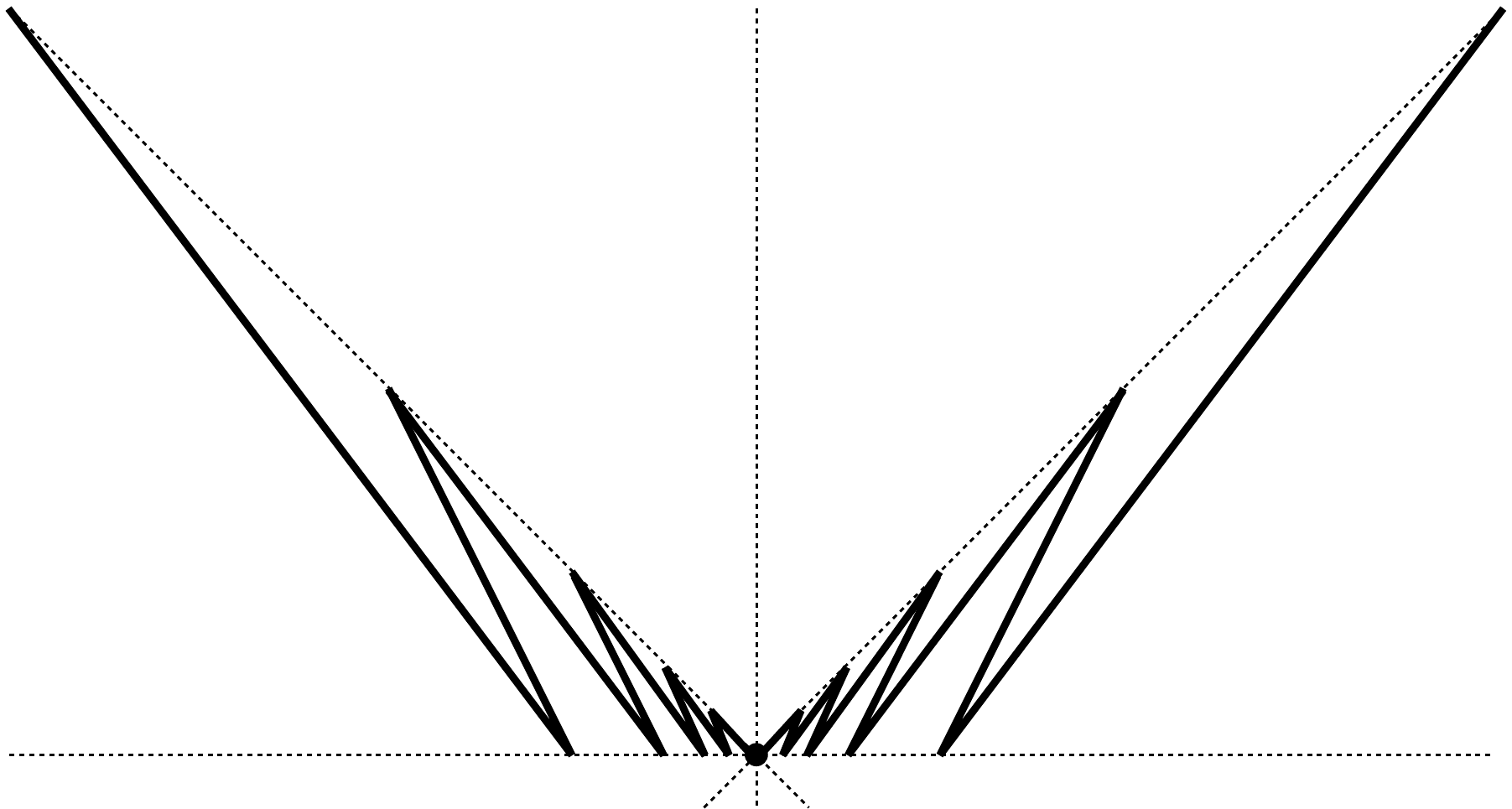}
      \caption{Zigzag singular set.}
      \label{fig:zigzag}
  \end{figure}
\end{center}

Note that the above $u$ does not satisfy Albano--Cannarsa's condition \eqref{eq:AlbanoCannarsa} at the origin.
Hence it seems not yet clear whether the graphical propagation always occurs under condition \eqref{eq:AlbanoCannarsa}:

\begin{problem}\label{prob:semiconcave}
  Let $\Omega\subset\R^m$ be open and $u:\Omega\to\R$ be a semi-concave function.
  Suppose that condition \eqref{eq:AlbanoCannarsa} holds at $x_0\in\Omega$, and also $\dim K_{\partial u(x_0)}(p_0)=m-1$ holds for some $p_0\in\bd[\partial u(x_0)]\setminus D^*u(x_0)$.
  Then, is there a Lipschitz hypersurface $S\subset\R^m$ such that $x_0\in S\subset \Sigma(u)$?
\end{problem}

If not, it seems still interesting to find a suitable subclass of semi-concave functions (e.g.\ solutions to a class of first-order Hamilton--Jacobi equations) for which the above statement holds.

\bibliography{bibliography}

\end{document}